\title[Asymptotically cylindrical $G_2$-manifolds]{Asymptotically
  cylindrical 7-manifolds of holonomy $G_2$ with applications to compact
  irreducible $G_2$-manifolds}
\author{Alexei Kovalev}
\author{Johannes Nordström}
\thanks{The second author is partially supported by the Royal Swedish Academy
  of Sciences funds.}
\address{DPMMS, University of Cambridge,
    Centre for \mbox{Mathematical} Sciences,
    \mbox{Wilberforce} Road, Cambridge CB3 0WB, UK}
\email{a.g.kovalev@dpmms.cam.ac.uk}
\address{Department of Mathematics, South Kensington Campus,
    Imperial College London, London SW7 2AZ, UK}
\email{j.nordstrom@imperial.ac.uk}
\DeclareMathOperator{\const}{const}
\DeclareMathOperator{\Ker}{Ker}
\DeclareMathOperator{\vol}{vol}
\newcommand{\half}{{\textstyle\frac{1}{2}}}
\newcommand{\third}{{\textstyle\frac{1}{3}}}
\newcommand{\quart}{{\textstyle\frac{1}{4}}}
\newcommand{\tquart}{{\textstyle\frac{3}{4}}}
\newcommand{\bbz}{\mathbb{Z}}
\newcommand{\bbr}{\mathbb{R}}
\newcommand{\bbc}{\mathbb{C}}
\newcommand{\bbrp}{\mathbb{R}_{+}}
\newcommand{\tpar}{s}
\newcommand{\defstr}{\calm}
\newcommand{\gtstr}{$G_{2}$\nobreakdash-\hspace{0pt}structure}
\newcommand{\gtmfd}{$G_{2}$\nobreakdash-\hspace{0pt}manifold}
\newcommand{\ltwoorth}{$L^{2}$\nobreakdash-\hspace{0pt}orthogonal}
\newcommand{\cystr}{Calabi--Yau structure}
\newcommand{\sob}[1]{L^{2}_{#1}}
\newcommand{\sobd}[1]{L^{2}_{#1,\delta}}
\newcommand{\sobf}[1]{L^{14}_{#1}}
\newcommand{\sobp}[1]{L^{p}_{#1}}
\newcommand{\harm}{\mathcal{H}}
\newcommand{\norm}[1]{\Vert #1 \Vert}
\newcommand{\lnorm}[1]{\Vert #1 \Vert_{L^{2}}}
\newcommand{\cald}{\mathcal{D}}
\newcommand{\calx}{\mathcal{X}}
\newcommand{\calm}{\mathcal{M}}
\newcommand{\calf}{\mathcal{F}}
\newcommand{\cale}{\mathcal{E}}
\newcommand{\gen}[1]{\langle#1\rangle}
\newcommand{\inner}[1]{{<}#1{>}}
\newcommand{\contrax}[1]{\textstyle\frac{\partial}{\partial x^{#1}}}
\newcommand{\contrat}{\textstyle\frac{\partial}{\partial t}}
\newcommand{\contral}{\textstyle\frac{\partial}{\partial L}}
\newcommand{\dddt}{\frac{d}{dt}}
\newcommand{\dsubx}{d^{}_{\! \scriptscriptstyle X}}
\newcommand{\dcsubx}{d^{*}_{\! \scriptscriptstyle X}}
\newcommand{\tv}{\tilde \varphi}
\newcommand{\tbtmatrix}[4]{\left( {\setlength\arraycolsep{2pt}
\begin{array}{cc} #2 & #4 \\ #1 & #3 \end{array}} \right)}
\newcommand{\gi}{\pm}
\newcommand{\ycal}[1]{\mathcal{#1}_{y}}
\newcommand{\gical}[1]{\mathcal{#1}_{\gi}}
\newcommand{\onecal}[1]{\mathcal{#1}_{+}}
\newcommand{\twocal}[1]{\mathcal{#1}_{-}}
\newcommand{\CP}{\mathbb{C}P}
\newcommand{\CX}{\mathbb{C}}
\newcommand{\RE}{\mathbb{R}}
\newcommand{\ZE}{\mathbb{Z}}
\newcommand{\Ga}{\Gamma}
\newcommand{\eps}{\varepsilon}
\renewcommand{\epsilon}{\varepsilon}
\newcommand{\phin}{\varphi^{\text{init}}}
\newcommand{\p}{\partial}
\newcommand{\oW}{\overline{W}}
\newcommand{\we}{\wedge}
\newtheorem{thm}{Theorem}[section]
\newtheorem{prop}[thm]{Proposition}
\newtheorem{lem}[thm]{Lemma}
\newtheorem{cor}[thm]{Corollary}
\theoremstyle{definition}
\newtheorem{defn}[thm]{Definition}
\theoremstyle{remark}
\newtheorem{rmk}[thm]{Remark}
\newtheorem{ex}[thm]{Example}
\renewcommand{\theenumi}{\textup{(\roman{enumi})}}
\newcommand{\alphenumi}{\textup{(\alph{enumi})}}
\renewcommand{\labelenumi}{\theenumi}
\begin{document}

\begin{abstract}
We construct examples of exponentially asymptotically cylindrical (EAC)
Riemannian 7-manifolds with holonomy group equal to $G_2$. To our knowledge,
these are the first such examples. We also obtain EAC coassociative calibrated
submanifolds. Finally, we apply our results to show that one of the
compact $G_2$-manifolds constructed by Joyce by desingularisation of a flat
orbifold $T^7/\Gamma$ can be deformed to give one of the compact \linebreak
$G_2$-manifolds obtainable as a generalized connected sum of two EAC
$SU(3)$-manifolds via the method of~\cite{kovalev03}.
\end{abstract}

\maketitle

\section{Introduction}

The Lie group $G_2$ occurs as the holonomy group of the Levi--Civita
connection on some Riemannian 7-dimensional manifolds. The possibility of
holonomy~$G_2$ was suggested in Berger's classification of the Riemannian
holonomy groups~\cite{berger55}, but finding examples of metrics with holonomy
exactly $G_{2}$ is an intricate task. The first local examples were
constructed by Bryant \cite{bryant87} using the theory of exterior
differential systems, and complete examples were constructed by Bryant and
Salamon \cite{bryant89} and by Gibbons, Page and Pope~\cite{GPP}.
The first compact examples were constructed by Joyce \cite{joyce96} by
resolving singularities of finite quotients of flat tori and the method was
further developed in~\cite{joyce00}.

Later the first author~\cite{kovalev03} gave a different method of producing
new compact examples of 7-manifolds with holonomy~$G_2$ by gluing pairs of
{\em asymptotically cylindrical} manifolds. More precisely, a Riemannian
manifold is exponentially asymptotically cylindrical (EAC) if outside a
compact subset it is diffeomorphic to $X \times \RE_{>0}$ for some compact
$X$, and the metric is asymptotic to a product metric at an exponential
rate. An important part of the method in~\cite{kovalev03} is the proof of a
version of the Calabi conjecture for manifolds with cylindrical ends producing
EAC Ricci-flat Kähler $3$-folds $W$ with holonomy $SU(3)$. The product EAC
metric on a 7-manifold $W\times S^1$ then also has holonomy $SU(3)$, a maximal
subgroup of $G_{2}$, and is induced by a torsion-free \gtstr{}. In fact,
$W\times S^1$ cannot have an EAC metric with holonomy equal to~$G_2$
by~\cite[Theorem~3.8]{jn1} because the fundamental group of $W\times S^1$ is
not finite.

The purpose of this paper is to construct examples of exponentially
asymptotically cylindrical manifolds whose holonomy is exactly $G_{2}$. To our
knowledge these are first such examples. Note that the metrics with holonomy
$G_2$ in \cite{bryant89} are asymptotically conical and not EAC.

It is by now a standard fact that a metric with holonomy $G_2$ on a 7-manifold
can be defined in terms of a `stable' differential $3$-form $\varphi$
equivalent to a {\em\gtstr}. More generally, any \gtstr~$\varphi$ determines a
metric since $G_2$ is a subgroup of~$SO(7)$. This metric will have holonomy in
$G_{2}$ if the \gtstr{} is \emph{torsion-free}. The latter
condition is equivalent to the defining $3$-form $\varphi$ being closed and
coclosed, a non-linear first order PDE. A 7-manifold endowed with a
torsion-free \gtstr\ is called a {\em\gtmfd}. Thus a \gtmfd\ is a Riemannian
manifold with holonomy contained in~$G_2$. For compact or EAC \gtstr s there
is a simple topological criterion to determine if the holonomy is exactly
of~$G_2$. See \S\ref{prelimsec} for the details.

Joyce finds examples of \gtstr s on compact manifolds that have small torsion
by resolving singularities of quotients of a torus $T^{7}$ equipped with a
flat \gtstr{} by suitable finite groups~$\Gamma$.
The proof in \cite[Ch.~11]{joyce00} of the existence result for torsion-free
\gtstr s on compact 7-manifolds is carefully written to use the
compactness assumption as little as possible. A large part of the proof can
therefore be used in the EAC setting too. The main additional difficulty in
this case is to show that the \gtstr{} constructed has the desired exponential
rate of decay to its cylindrical asymptotic model. This task is accomplished
by our first main result Theorem~\ref{eacperturbthm}. In \S\ref{exsec} we
apply this result and explain how, in one particular example, one can cut
$T^{7}/\Gamma$ into two pieces along a hypersurface, attach a semi-infinite
cylinder to each half, and resolve the singularities to form EAC \gtstr s
satisfying the hypotheses of Theorem~\ref{eacperturbthm}. In a similar way, we
obtain in~\S\ref{furthersec} more examples of EAC \gtmfd s which are
simply-connected with a single end, and therefore have holonomy exactly
$G_{2}$ by~\cite[Theorem~3.8]{jn1} (see \S\ref{as.cyl}). This includes examples
both where the holonomy of the cross-section is $SU(3)$ and where it reduces
to (a finite extension of) $SU(2)$ or is flat. We explain how to compute their
Betti numbers, and find some examples of asymptotically cylindrical
coassociative minimal submanifolds.

In~\S\ref{pullsec} we study a kind of inverse of the above construction.
Given a pair of EAC \gtmfd s with asymptotic cylindrical models matching via
an orientation-reversing isometry, one can truncate their cylindrical ends
after some large length $L$ and identify their boundaries to form a
generalized connected sum, a compact manifold with an approximately
cylindrical neck of length approximately~$2L$. This compact 7-manifold
inherits from the pair of EAC \gtmfd s a well-defined \gtstr\ and the gluing
theorem in~\cite[\S 5]{kovalev03} asserts that when $L$ is sufficiently large
this \gtstr{} can be perturbed to a torsion-free one.

Our method of constructing EAC \gtmfd s by resolving `half' of $T^{7}/\Gamma$
produces them in such matching pairs. The connected sum of the pair is
topologically the same as the compact \gtmfd{} $(M,\varphi)$ obtained by
resolving the initial orbifold $T^{7}/\Gamma$. We show in our second main
result Theorem~\ref{pullthm} that $\varphi$ can be continuously deformed to
the torsion-free \gtstr s obtained by gluing  a pair of EAC \gtmfd s as
in~\cite{kovalev03}. In other words, the \gtstr s produced by the
connected-sum method lie in the same connected component of the moduli space
of torsion-free \gtstr s as the ones originally constructed by
Joyce. Informally, the path connecting $\varphi$ to the connected-sum
\gtstr s is given by increasing the length of one of the $S^{1}$ factors in
$T^{7}$ before resolving $T^{7}/\Gamma$. In this sense, the EAC \gtmfd s are
obtained by `pulling apart' the compact \gtmfd~$(M,\varphi)$.

In \S\ref{connectsums} we consider one pulling-apart example in
detail and identify the two EAC manifolds as products of $S^1$ and a complex
3-fold. The latter complex 3-folds were studied in~\cite{kovalev-lee08}
obtained from K3 surfaces with non-symplectic involution and the gluing
produces a compact $G_2$-manifold according to the method
of~\cite{kovalev03}. Thus the compact 7-manifold $M$ admits a path
$g(t)$, $0<t<\infty$ of metrics with holonomy $G_2$ so that the limit as $t\to
0$ corresponds to an orbifold $T^7/\Ga$ and the limit as $t\to\infty$
corresponds to a disjoint union of EAC \gtmfd s of the form $W_j\times S^1$,
$j=1,2$, where each $W_j$ is an EAC Calabi--Yau complex 3-fold with
holonomy $SU(3)$. To the authors' knowledge, $g(t)$ is the first example of
$G_2$-metrics on a compact manifold exhibiting two geometrically different
types of deformations, related to different constructions (\cite{joyce00}
and~\cite{kovalev03}) of compact irreducible \gtmfd s. (Demonstrating that two
constructions produce distinct examples of \gtmfd s can often be accomplished
by checking that these have different Betti numbers, a rather easier task.)

For the examples in this paper, we mostly restrict attention to one
compact 7-manifold underlying the \gtmfd s constructed in~\cite[I~\S
2]{joyce96}.  However, our techniques can be extended with more or less
additional work to construct more examples of EAC \gtmfd s from other \gtmfd
s, including those obtained in~\cite{joyce00} by resolving more complicated
singularities. The authors hope to develop this in a future paper.

\section{Preliminaries}
\label{prelimsec}

\subsection{Torsion-free $G_2$-structures and the holonomy group $G_{2}$}
\label{su2sub}

The group $G_{2}$ can be defined as the automorphism group of the normed
algebra of octonions. Equivalently, $G_{2}$ is the stabiliser in
$GL(\bbr^{7})$ of
\begin{equation}
\label{g2formeq}
\varphi_{0} = dx^{123} + dx^{145} + dx^{167} + dx^{246}
 - dx^{257} - dx^{347} - dx^{356} \in \Lambda^{3}(\bbr^{7})^{*} ,
\end{equation}
where $dx^{ijk}=dx^i\we dx^j\we dx^k$ \cite[p.~539--541]{bryant87}.
A \emph{\gtstr{}} on a 7-manifold $M$ may therefore be induced by a
choice of a differential $3$-form $\varphi$ such that
$\iota_p^*(\varphi(p))=\varphi_0$, for each $p\in M$ for some linear isomorphism
$\iota_p:\bbr^7\to T_pM$ smoothly depending on~$p$. Every such 3-form on $M$
will be called \emph{stable}, following~\cite{hitchin-stable}, and we shall,
slightly inaccurately, say that $\varphi$ \emph{is} a $G_2$-structure. As
$G_2\subset SO(7)$, a \gtstr{} induces a Riemannian metric $g_{\varphi}$ and
an orientation on $M$, and thus also a Levi--Civita connection
$\nabla_{\varphi}$ and a Hodge star~$*_{\varphi}$.

The holonomy group of a connected Riemannian manifold $M$ is defined up to
isomorphism as the group of isometries of a tangent space at $p\in M$
generated by parallel transport, with respect to the Levi--Civita connection,
around closed curves based at~$p$. Parallel tensor fields on a manifold
correspond to invariants of its holonomy group and the holonomy of $g_\varphi$
on~$M$ will be contained in~$G_{2}$ if and only if $\nabla_{\varphi}\varphi =
0$. A \gtstr{} satisfying this latter condition is called
\emph{torsion-free} and by a result of Gray (see \cite[Lemma 11.5]{salamon})
this is equivalent to
$$
d\varphi = 0 \text{ and }d{*_{\varphi}}\varphi = 0.
$$
We call a $7$-dimensional manifold equipped with a torsion-free \gtstr{}
a \emph{\gtmfd}. We call a \gtmfd\ {\em irreducible} of the holonomy of the
induced metric is all of $G_2$ (i.e.\ not a proper subgroup). A compact
\gtmfd\ is irreducible if and only if its fundamental group is finite
\cite[Propn.~10.2.2]{joyce00}.

More generally, the only connected Lie subgroups of $G_2$ that can arise as
holonomy of the Riemannian metric on a $G_2$-manifold are $G_2$, $SU(3)$,
$SU(2)$ and $\{1\}$ \cite[Theorem~10.2.1]{joyce00}.

We call a \gtstr{} $\varphi_X$ on a product manifold $X^{6} \times \bbr$
\emph{cylindrical} if it is translation-invariant in the second factor and
defines a product metric $g_M=dt^2+g_X$, where $t$ denotes the coordinate
on~$\bbr$. Then $\frac{\p}{\p t}$ is a parallel vector field on
$X^{6} \times \bbr$. The stabiliser in $G_{2}$ of a vector in $\bbr^{7}$ is
$SU(3)$, so the Riemannian product of $X^{6}$ with $\bbr$ has holonomy
contained in $G_{2}$ if and only if the holonomy of~$X$ is contained
in~$SU(3)$. 
The latter condition means that $X$ is a complex 3-fold with a Ricci-flat
Kähler metric and admits a nowhere-vanishing holomorphic (3,0)-form,
i.e.\ $X$ is a \emph{Calabi--Yau 3-fold}. More explicitly, we can write
\begin{equation}\label{g2su3}
\varphi_X = \Omega + dt \wedge \omega,
\quad\text{where }
\omega={\textstyle\frac{\p}{\p t}}\lrcorner\varphi_X
\text{ and }
\Omega=\varphi_X|_{X\times\{\text{pt}\}}.
\end{equation}
Then $\omega$ is the Kähler form on~$X$ and $\Omega$ is the real part of a
holomorphic $(3,0)$-form on~$X$, whereas $g(\varphi_X)=dt^2+g_X$. It can be
shown that a pair $(\Omega, \omega)$ of closed differential forms obtained
from a torsion-free $G_2$-structure as in~\eqref{g2su3} determines a
Calabi--Yau structure on~$X$ (cf.~\cite[Lemma 6.8]{hitchin-sl} and
\cite[Propn.~11.1.2]{joyce00}).

If the cross-section is itself a Riemannian product $X=S^1 \times S^1\times D$
then $D$ is a Calabi--Yau complex surface with holonomy in $SU(2)\cong Sp(1)$,
with Kähler form $\kappa_I$ and holomorphic (2,0)-form $\kappa_J+i\kappa_K$. 
Alternatively, $D$ may be described as a \emph{hyper-Kähler 4-manifold}, so
$D$ has three integrable complex structures $I,J,K$ satisfying quaternionic
relations $IJ=-JI=K$ and a metric which is Kähler with respect to all three.
The $\kappa_I$,$\kappa_J$,$\kappa_K$ are the respective Kähler forms and this
triple of closed real 3-forms in fact determines the hyper-Kähler structure
(see \cite[p.~91]{hitchin87}).
Denote by $x^1,x^2$ the coordinates on the two $S^1$ factors of~$X$. Then
the cylindrical torsion-free $G_2$-structure on
$\RE\times S^1\times S^1\times D$ corresponding to a hyper-Kähler structure
on~$D$ is
\begin{equation}\label{g2su2}
\varphi_D=dx^1\we dx^2\we dt+dx^1\we\kappa_I+dx^2\we\kappa_J+dt\we\kappa_K.
\end{equation}
It induces a product metric $g(\varphi_D)= dt^2+(dx^1)^2+(dx^2)^2+g_D$
(cf.~\cite[Propn.~11.1.1]{joyce00}).

\subsection{Asymptotically cylindrical manifolds}
\label{as.cyl}

A non-compact manifold $M$ is said to have \emph{cylindrical ends}
if $M$ is written as a union of a compact manifold $M_{0}$ with boundary
$\p M_0$ and a half-cylinder $M_{\infty}=\bbrp\times X$, the two pieces
identified via the common boundary
$\p M_{0}\cong \{0\}\times X\subset M_{\infty}$.
The manifold $X$ is assumed compact without boundary and is called the
\emph{cross-section} of~$M$. 
Let $t$ be a smooth real function on $M$ which coincides with the
$\bbrp$-coordinate on $M_{\infty}$, and is negative on the interior of $M_{0}$.
A metric $g$ on $M$ is called \emph{exponentially asymptotically cylindrical
(EAC) with rate $\delta>0$} if the functions
$e^{\delta t} \norm{\nabla^{k}_\infty(g-(dt^2+g_X)}$ on the end $M_{\infty}$
are bounded for all $k \geq 0$, where the point-wise norm $\|\cdot\|$ and the
Levi--Civita connection $\nabla_\infty$ are induced by some product Riemannian
metric $dt^2+g_X$ on $\bbrp\times X$. A Riemannian manifold (with cylindrical
ends) with an EAC metric will be called an \emph{EAC manifold}.

We can use $\nabla_\infty$ to define \emph{translation-invariant} tensor
fields on an EAC manifold $M$ as tensor fields whose restrictions to
$M_\infty$ are independent of~$t$.
A tensor field $s$ on $M$ is said to be \emph{exponentially asymptotic} with
rate $\delta > 0$ to a translation-invariant tensor $s_{\infty}$ on $M_\infty$
if $e^{\delta t} \norm{\nabla^{k}_\infty(s-s_{\infty})}$ are bounded
on $M_{\infty}$ for all $k \geq 0$.
A \gtstr{} is said to be \emph{EAC} if it is exponentially asymptotic to a
cylindrical \gtstr{} on $\bbrp\times X$. It is not difficult to check that
each EAC \gtstr{} $\varphi$ induces an EAC metric $g(\varphi)$.
The asymptotic limit of a torsion-free EAC \gtstr{} then defines a
\cystr{} on the cross-section $X$.

We shall need a topological criterion for an EAC \gtmfd\ to be irreducible.
\begin{thm}[{\cite[Theorem 3.8]{jn1}}]
\label{irredthm}
Let $(M^7,\varphi)$ be an EAC \gtmfd. Then the induced metric $g_\varphi$ has
full holonomy $G_2$ if and only if the fundamental group $\pi_1(M)$ is finite
and neither $M$ nor any double cover of~$M$ is homeomorphic to a cylinder
$\RE\times X^6$.
\end{thm}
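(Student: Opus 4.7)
The plan is to reduce the theorem to the classification of connected holonomy subgroups of a \gtmfd{} (by~\cite[Theorem~10.2.1]{joyce00} these are $\{1\}$, $SU(2)$, $SU(3)$, or $G_2$) and to use de Rham's decomposition theorem together with the Cheeger--Gromoll splitting theorem to connect any reduction of holonomy to the presence of a cylindrical factor.

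For the ``only if'' direction, assume $g_\varphi$ has full holonomy~$G_2$. Since $G_2$ acts irreducibly on $\bbr^7$ there is no parallel vector field on~$M$, so $M$ cannot be isometric to a Riemannian product $\bbr\times X^6$; the same applies to any Riemannian cover, since $G_2$ is connected and holonomy is preserved under coverings. Hence neither $M$ nor any double cover is a cylinder. To obtain finiteness of~$\pi_1(M)$ I would pass to the universal cover~$\tilde M$, which inherits a complete Ricci-flat metric with the same holonomy~$G_2$. If $\pi_1(M)$ were infinite, I would try to extract a line in~$\tilde M$: when the image $i_*\pi_1(X)\subset\pi_1(M)$ has index $\geq 2$, the EAC end of $M$ has multiple disjoint lifts, giving $\tilde M$ at least two ends, and a proper minimising geodesic between two of them yields a line. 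Cheeger--Gromoll then forces $\tilde M\cong\bbr\times N$ isometrically, contradicting full holonomy $G_2$.

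Conversely, suppose $\pi_1(M)$ is finite and neither $M$ nor any double cover is a cylinder, and for contradiction that the holonomy is strictly contained in~$G_2$. The classification forces the connected component to be $\{1\}$, $SU(2)$, or $SU(3)$, each of which stabilises a non-zero vector in~$\bbr^7$. On a finite cover $\tilde M$ of $M$ whose holonomy is connected there is therefore a non-trivial parallel vector field, and the de Rham theorem splits $\tilde M$ isometrically as $\bbr\times N$. Finiteness of $\pi_1(M)$ makes $\tilde M\to M$ a finite covering, so pulling back the EAC structure forces the cross-section of $\tilde M$ to be compact, hence $N$ is compact and $\tilde M$ is a cylinder. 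The finite deck group of $\tilde M\to M$ acts on $\bbr\times N$ by isometries, with each finite-order element either acting trivially on the $\bbr$-factor or reflecting it; thus $M$ is either itself a cylinder or doubly covered by one, contradicting the hypothesis.

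The main technical obstacle is the only-if direction in the subcase where $i_*\pi_1(X)=\pi_1(M)$, so that $\tilde M$ has a single end and the two-ends line-extraction breaks down. In this case $\pi_1(X)$ is itself infinite and I would exploit that the asymptotic model $\bbr\times X$ has holonomy contained in~$SU(3)$, using exponential decay estimates (in the spirit of those underlying Theorem~\ref{eacperturbthm}) to propagate a parallel vector field from the cylindrical end back to all of~$\tilde M$ and so reduce the holonomy, yielding the required contradiction.
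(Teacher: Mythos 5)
First, note that the paper does not prove this statement at all: it is quoted verbatim from \cite[Theorem~3.8]{jn1}, so there is no in-paper argument to compare yours against. Judged on its own terms, your ``if'' direction is essentially sound: passing to the (finite) universal cover, invoking \cite[Theorem~10.2.1]{joyce00} to get a parallel vector field, splitting by de~Rham, and analysing the finite deck group acting on $\bbr\times N$ (trivially or by reflection on the $\bbr$-factor) is the standard route, modulo the detail of why the factor $N$ must be compact --- this needs an argument (e.g.\ the linear volume growth of an EAC metric, or the cylindrical end structure of $\tilde M$), but it is fixable.

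The genuine gap is in the ``only if'' direction, in proving that full holonomy forces $\pi_1(M)$ to be finite. Both of your sub-cases fail as stated. In the case where $i_*\pi_1(X)$ has index $\ge 2$, the two lifted ends of $\tilde M$ do not by themselves produce a line: when $\pi_1(M)$ is infinite the preimage of the compact core $M_0$ in $\tilde M$ is non-compact, and the cross-sections of the lifted ends (covers of $X$ with group $\Ker i_*$) may also be non-compact, so the minimising geodesics joining the two ends have no reason to accumulate in a compact set, and the Cheeger--Gromoll line need not exist. In the one-ended case $i_*\pi_1(X)=\pi_1(M)$ your proposed repair --- propagating a parallel vector field inward from the asymptotic cylinder using exponential decay --- cannot work even in principle: the holonomy of the asymptotic model being contained in $SU(3)$ never forces a holonomy reduction of $M$ itself (this is precisely the situation of every irreducible EAC \gtmfd{} constructed in this paper, whose cross-sections carry \cystr s). The parallel vector field $\frac{\p}{\p t}$ on the cylinder is not even approximately parallel on the interior of $M$, and there is no decay estimate that extends it. What is actually needed here --- and what \cite{jn1} supplies --- is the Hodge theory of bounded harmonic $1$-forms on EAC manifolds: one uses the structure of $\pi_1$ of the compact Ricci-flat cross-section $X$ to extract a finite cover with $b^1>0$, produces from this a bounded harmonic $1$-form via the weighted elliptic theory (cf.\ Theorem~\ref{hodgedecompthm}), and then applies a Bochner/maximum-principle argument on the EAC manifold to conclude that this form is parallel, contradicting full holonomy. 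Without some substitute for this analytic input, the finiteness of $\pi_1(M)$ does not follow from the splitting theorems alone.
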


\begin{cor}\label{irred}
Every simply-connected EAC \gtmfd{} with a single end (i.e.\ a connected
cross-section~$X$) is irreducible.
\end{cor}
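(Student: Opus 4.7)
The plan is to apply Theorem~\ref{irredthm} to the given simply-connected EAC \gtmfd~$M$. I need to verify its two hypotheses: first, that $\pi_1(M)$ is finite; second, that neither $M$ nor any double cover of $M$ is homeomorphic to a cylinder $\RE\times X^6$. The first condition is immediate since $\pi_1(M)=1$ is trivially finite, so the entire content of the argument lies in ruling out cylinder structures.

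To show $M$ itself is not a cylinder, I will count ends: as an EAC manifold with connected cross-section~$X$, $M$ has exactly one end, whereas $\RE\times X^6$ with $X^6$ compact and connected has two ends; so $M$ is not such a cylinder. If $X^6$ were disconnected the cylinder would be disconnected, but $M$ is simply-connected and hence connected, which rules this case out as well.

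The slightly less obvious step is the double-cover clause. Here I will use that a simply-connected space admits no non-trivial connected cover: any connected cover $\widetilde M\to M$ induces an injection $\pi_1(\widetilde M)\hookrightarrow\pi_1(M)=1$, and the number of sheets equals the index of the image, so any connected cover of $M$ must be one-sheeted. Consequently every double cover of $M$ is homeomorphic to the disjoint union $M\sqcup M$. If this were identified with some $\RE\times X^6$ then $X^6$ would split as $X^6_1\sqcup X^6_2$, and each component $\RE\times X^6_i$ would be a cylinder over a compact connected 6-manifold and would therefore have two ends, contradicting that each copy of $M$ has only one. So no double cover of $M$ is a cylinder, and Theorem~\ref{irredthm} yields that the holonomy of $g_\varphi$ is full~$G_2$.

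There is no real obstacle; the proof is a short piece of topological bookkeeping. The point to keep straight is how the number of ends behaves under disjoint unions and products with~$\RE$, which is what turns the single-end hypothesis into exclusion of all cylinder structures on $M$ and on its double covers.
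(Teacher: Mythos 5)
Your proposal is correct and is exactly the routine verification the paper intends: the corollary is stated as an immediate consequence of Theorem~\ref{irredthm}, and your check that $\pi_1(M)=1$ is finite, that the single-end hypothesis rules out $M\cong\RE\times X^6$ by counting ends, and that the only double cover of a simply-connected $M$ is $M\sqcup M$ (whose components each have one end) is precisely the bookkeeping being left to the reader.
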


\begin{rmk}
As every \gtmfd\ is Ricci-flat, the Cheeger--Gromoll line splitting theorem
\cite{cheeger71} implies that a connected EAC \gtmfd\ either has just one
end or two ends. In the latter case, the EAC \gtmfd\ is necessarily a cylinder
$\RE\times X$ with a product metric and cannot have full holonomy~$G_2$.
\end{rmk}

On an asymptotically cylindrical manifold $M$ it is useful to introduce
\emph{weighted Sobolev norms}. Let $E$ be a vector bundle on $M$ associated
to the tangent bundle, $k \geq 0$ and
$\delta \in \bbr$. We define the $\sobd{k}$-norm of a section $s$ of $E$
in terms of the usual Sobolev norm by
\begin{equation}
\norm{s}_{\sobd{k}} = \norm{e^{\delta t}s}_{\sob{k}} .
\end{equation}
Denote the space of sections of $E$ with finite $\sobd{k}$-norm
by $\sobd{k}(E)$.
Up to Lipschitz equivalence the weighted norms are independent of the choice
of asymptotically cylindrical metric, and of the choice of $t$ on the compact
piece $M_{0}$. In particular, the topological vector spaces $\sobd{k}(E)$ are
independent of these choices. As any asymptotically cylindrical manifold~$M$
clearly has bounded curvature and injectivity radius bounded away from zero,
the Sobolev embedding $L^2_k\subset C^r$ is still valid whenever $r<k-7/2$
\cite[\S~2.7]{aubin}.
It follows that $L^2_{k,\delta}$ consists of sections decaying (when
$\delta>0$) with all derivatives of order up to~$r$  at the rate
$O(e^{-\delta t})$ as $t\to\infty$.

An important property of the weighted norms is that elliptic linear operators
with asymptotically translation-invariant coefficients over~$M$ extend to
Fredholm operators between $\delta$-weighted spaces of sections, for `almost
all' choices of weight parameter $\delta$ \cite{lockhart87,lockhart85,MP}.
In particular, this can be applied to the Hodge Laplacian of an EAC metric
to deduce results analogous to Hodge theory for compact manifolds.
In this paper we shall require only a result about \emph{Hodge decomposition}.
Let $M^{n}$ be an EAC manifold with rate $\delta_{0}$ and cross-section $X$.
Abbreviate $\Lambda^{m}T^{*}M$ to $\Lambda^{m}$,
and let
\[ \sobd{k}[d\Lambda^{m-1}], \sobd{k}[d^{*}\Lambda^{m+1}]
 \subset \sobd{k}(\Lambda^{m}) \]
denote the subspaces of exact and coexact $\sobd{k}$ $m$-forms respectively.
Let $\harm^{m}_{+}$ denote the space of $L^{2}$ harmonic forms on $M$,
and $\harm^{m}_{\infty}$ the space of translation-invariant harmonic
forms on the product cylinder $X \times \bbr$.
If $\rho:M\to [0,1]$ is a smooth cut-off function supported on the cylindrical
ends $M_\infty$ of $M$ and such that $\rho\equiv 1$ in the region
$\{t>1\}\subset M$ then $\rho \harm^{m}_{\infty}$ can be identified with a
space of smooth $m$-forms on $M$.
Suppose that $0 < \delta < \delta_{0}$ and that $\delta^{2}$ is smaller than
any positive eigenvalue of the Hodge Laplacian on $\oplus_m\Lambda^mT^*X$
for the asymptotic limit metric $g_X$ on~$X$. Then the elements of
$\harm^{m}_{+}$ are smooth and decay exponentially with rate $\delta$
\cite{MP}. 

\begin{thm}[cf.~{\cite[p.~328]{jn1}}]
\label{hodgedecompthm}
In the notation above, there is an \ltwoorth{} direct sum decomposition
\begin{equation}
\label{noncpthodgeeq}
\sobd{k}(\Lambda^{m}) = \harm^{m}_{+} \oplus \sobd{k}[d\Lambda^{m-1}]
\oplus \sobd{k}[d^{*}\Lambda^{m+1}] .
\end{equation}
Furthermore, any element of $\sobd{k}[d\Lambda^{m-1}]$ can be written as
$d\phi$, for some coexact form
$\phi \in \sobd{k+1}(\Lambda^{m-1}) \oplus \rho\harm^{m-1}_{\infty}$.
\end{thm}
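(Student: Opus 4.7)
I would prove Theorem~\ref{hodgedecompthm} in two stages: first establish the \ltwoorth{} direct sum~\eqref{noncpthodgeeq}, then refine the description of the exact summand.

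The first stage rests on Fredholm theory for the Hodge Laplacian on weighted Sobolev spaces~\cite{lockhart85,MP}. The asymptotic model of $\Delta$ on the product cylinder $X \times \bbr$ has indicial roots $\pm\sqrt{\lambda}$ as $\lambda$ runs over the eigenvalues of $\Delta_{X}$, so the hypothesis on $\delta$ places us in the noncritical range and $\Delta \colon \sobd{k+2}(\Lambda^{m}) \to \sobd{k}(\Lambda^{m})$ is Fredholm. Its kernel equals $\harm^{m}_{+}$, because any $L^{2}$-harmonic form on $M$ decays exponentially at every sub-indicial rate (again by~\cite{MP}). By weighted-space duality and formal self-adjointness, the cokernel is identified with the space of smooth harmonic $m$-forms of weight $-\delta$; a Fourier decomposition along the cylindrical end using $\delta^{2} < \lambda_{1}$ shows that such a form is bounded and asymptotic to a translation-invariant harmonic form on the cylinder. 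An integration-by-parts check then pins down the image of $\Delta$ as $(\harm^{m}_{+})^{\perp} \subset \sobd{k}(\Lambda^{m})$. For any $\alpha \in \sobd{k}(\Lambda^{m})$, writing $\alpha = h + \Delta\beta$ with $h \in \harm^{m}_{+}$ and $\beta \in \sobd{k+2}(\Lambda^{m})$, and setting $\phi_{1} = d^{*}\beta$ and $\phi_{2} = d\beta$, yields $\alpha = h + d\phi_{1} + d^{*}\phi_{2}$ with the three summands pairwise $L^{2}$-orthogonal by integration by parts---legitimate since everything decays exponentially.

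For the second stage, let $\alpha \in \sobd{k}[d\Lambda^{m-1}]$, so $\alpha = d\psi$ for some smooth form $\psi$ on $M$ that is a priori non-decaying. I would first construct a decaying primitive of $\alpha$ on the cylindrical end. Decomposing $\alpha|_{M_{\infty}} = dt \wedge \beta(t) + \gamma(t)$ with respect to the product structure, the formula
\[ \xi(t,x) = -\int_{t}^{\infty} \beta(s,x)\,ds \]
defines a form with $d\xi = \alpha$ on $M_{\infty}$ (using $d\alpha = 0$, automatic since $\alpha$ is exact) and decaying at the same exponential rate as $\alpha$. The difference $\psi - \xi$ on $M_{\infty}$ is then closed; since $M_{\infty} \simeq X \times \bbrp$ has the homotopy type of $X$, it is cohomologous to a unique translation-invariant harmonic form $h_{\infty} \in \harm^{m-1}_{\infty}$ on the cylinder. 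Patching $\xi + \rho h_{\infty}$ back to $\psi$ via a cut-off and absorbing the discrepancy into a globally defined exact correction produces a primitive
\[ \tilde\psi \in \sobd{k+1}(\Lambda^{m-1}) \oplus \rho\harm^{m-1}_{\infty} \]
of $\alpha$. Applying the first stage to $\alpha$ itself gives $\alpha = h + d\phi_{1} + d^{*}\phi_{2}$. Using $d\alpha = 0$ and the decay of $\phi_{2}$, the identity $\lnorm{d^{*}\phi_{2}}^{2} = \langle dd^{*}\phi_{2}, \phi_{2}\rangle = 0$ forces $d^{*}\phi_{2} = 0$. A cut-off integration by parts---now legitimate because $\tilde\psi$ is bounded and $h \in \harm^{m}_{+}$ decays exponentially---gives $\langle \alpha, h\rangle = \langle d\tilde\psi, h\rangle = 0$, so $h = 0$. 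Hence $\alpha = d\phi_{1}$ with $\phi_{1} = d^{*}\beta$ coexact in $\sobd{k+1}(\Lambda^{m-1})$, which is contained in the stated extended space.

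The main obstacle lies in the vanishing of $h$ in the second stage. A primitive $\psi$ of an $\sobd{k}$-form need not itself lie in any weighted $L^{2}$ space, so one cannot directly pair $\alpha$ with $h \in \harm^{m}_{+}$ by integration by parts. The detour through the explicit cylindrical primitive $\xi$, corrected by the translation-invariant harmonic form $h_{\infty}$, upgrades $\psi$ to a bounded primitive, which is precisely what is needed to make the pairing vanish.
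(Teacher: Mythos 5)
The paper does not actually prove Theorem~\ref{hodgedecompthm}; it is quoted from \cite[p.~328]{jn1}, so your attempt can only be judged on its own merits. There it has a genuine gap, and it sits exactly where the theorem's statement warns you it should: in your first stage you claim that $\Delta \colon \sobd{k+2}(\Lambda^{m}) \to \sobd{k}(\Lambda^{m})$ has image $(\harm^{m}_{+})^{\perp}$, so that every $\alpha$ can be written as $h + \Delta\beta$ with $\beta \in \sobd{k+2}(\Lambda^{m})$. This is false in general. By the weighted duality you invoke, the image of $\Delta_{\delta}$ is the annihilator of $\Ker(\Delta \colon \sob{}_{-\delta} \to \sob{}_{-\delta})$, and that kernel consists of \emph{all} harmonic forms of growth $O(e^{\delta t})$ --- in particular the bounded ones asymptotic to nonzero elements of $\harm^{m}_{\infty}$, which pair continuously and nontrivially with $\sobd{k}$ but are not in $\harm^{m}_{+}$. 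Whenever $b^{m}(X)+b^{m-1}(X)>0$ (always the case in the relevant degrees, since $b^0(X)\neq 0$) the cokernel is therefore strictly larger than $\harm^{m}_{+}$. The simplest instance is $m=0$: $\harm^{0}_{+}=0$, yet $\Delta f = g$ with $f$ decaying forces $\int_{M} g = 0$, an obstruction coming from the bounded harmonic form $1 \notin L^{2}$. The correct route is to solve $\Delta\beta = \alpha - h$ in the \emph{extended} space $\sobd{k+2}(\Lambda^{m}) \oplus \rho\bigl(\harm^{m}_{\infty} + t\,\harm^{m}_{\infty}\bigr)$, where $\Delta$ does surject onto $(\harm^{m}_{+})^{\perp}$, and then use the fact that translation-invariant harmonic forms on the cylinder are closed and coclosed to check that $d d^{*}\beta$ and $d^{*} d\beta$ still lie in $\sobd{k}$ and that $d^{*}\beta \in \sobd{k+1}(\Lambda^{m-1}) \oplus \rho\harm^{m-1}_{\infty}$.

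The gap propagates into your second stage and produces a conclusion that is visibly too strong: you end with $\alpha = d\phi_{1}$ for a coexact $\phi_{1} \in \sobd{k+1}(\Lambda^{m-1})$, with no $\rho\harm^{m-1}_{\infty}$ correction. If that were true, the extra summand in the theorem's statement would be redundant --- but it is not. Take $\alpha = df$ with $f$ equal to a nonzero constant $c$ on the end: $\alpha$ is exact and exponentially decaying, yet every primitive of $\alpha$ is asymptotic to a constant, so no primitive lies in $\sobd{k+1}$. The non-decaying harmonic piece is unavoidable, and it is precisely the shadow of the $\rho\, t\,\harm^{m}_{\infty}$ term in the extended solution $\beta$. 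The remainder of your second stage is sound and worth keeping: the explicit cylindrical primitive $\xi(t,x)=-\int_{t}^{\infty}\beta(s,x)\,ds$, the correction by a translation-invariant harmonic form to get a bounded global primitive, and the resulting integration by parts showing $h=0$ and $d^{*}\phi_{2}=0$ are all correct and are essentially the right tools --- they just need to be fed a $\beta$ obtained from the extended mapping problem rather than from the (non-existent) solution in $\sobd{k+2}$.
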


\section{Existence of EAC torsion-free $G_2$-structures}
\label{eacperturbsec}

We shall construct EAC manifolds with holonomy exactly $G_{2}$ by modifying
Joyce's construction of compact \gtmfd s. To this end, we shall obtain
a one-parameter family of \gtstr s with `small' torsion on a manifold with
cylindrical end. More precisely, this family will satisfy the hypotheses of
the following theorem, the main result of this section, which is an EAC
version of~\cite[Theorem~11.6.1]{joyce00}.

\begin{thm}
\label{eacperturbthm}
Let  $\mu, \nu, \lambda$ positive constants.
Then there exist positive constants $\kappa, K$ such that whenever
$0 < s < \kappa$ the following is true.

Let $M$ be a 7-manifold with cylindrical end $M_\infty$ and cross-section
$X^{6}$, and suppose that a closed stable 3-form $\tv$ defines on~$M$ a
\gtstr{} which is cylindrical and torsion-free on~$M_\infty$.
Suppose that $\psi$ is a smooth compactly supported $3$-form on $M$ satisfying
$d^{*}\psi = d^{*}\tv$, and let $r(\tv)$ and $R(\tv)$ be the injectivity
radius and Riemannian curvature of the EAC metric $g_{\tv}$ on~$M$. If
{
\renewcommand{\theenumi}{\alphenumi}
\begin{enumerate}
\item \label{psiestitem}
\begin{equation}
\label{psiesteq}
\norm{\psi}_{L^{2}} < \lambda s^{4},\quad
\norm{\psi}_{C^{0}} < \lambda s^{1/2},\quad
\norm{d^{*}\psi}_{L^{14}} < \lambda,
\end{equation}
\item $r(\tv) > \mu s$,
\item \label{curvestitem} $\norm{R(\tv)}_{C^{0}} < \nu s^{-2}$,
\end{enumerate}
}
then there is a smooth exact $3$-form $d\eta$ on $M$, exponentially decaying
with all derivatives as $t\to\infty$, such that
\begin{equation}
\label{eacdetasmalleq}
\norm{d\eta}_{L^{2}} < K s^{4}, \;\:
\norm{d\eta}_{C^{0}} < K s^{1/2}, \;\:
\norm{\nabla d\eta}_{L^{14}} < K,
\end{equation}
and $\varphi = \tv + d\eta$ is a torsion-free \gtstr.
\end{thm}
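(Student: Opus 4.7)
The plan is to carry out Joyce's proof of the compact analogue \cite[Thm.~11.6.1]{joyce00} in the exponentially weighted Sobolev framework of \S\ref{prelimsec}. Since $d\tv = 0$, any $\varphi = \tv + d\eta$ is automatically closed, so only $d*_\varphi \varphi = 0$ remains. Wherever $d\eta$ is small in $C^0$, one has a smooth algebraic expansion
\[
*_\varphi \varphi = *_{\tv} \tv + \tfrac{4}{3} *_{\tv} d\eta + G(d\eta),
\]
with $G$ vanishing to second order at zero. Applying $d$, then $*_{\tv}$, using $d^* = -{*d*}$ on $3$-forms in dimension $7$, invoking the hypothesis $d^*\psi = d^*\tv$, and imposing the Coulomb gauge $d^*\eta = 0$ so that $\Delta \eta = d^*d\eta$, the torsion-free condition becomes the coexact semilinear elliptic equation
\[
\Delta \eta = -\tfrac{3}{4} d^* \psi + N(d\eta),
\]
where $N(d\eta) = \tfrac{3}{4} *_{\tv} dG(d\eta)$ is a coexact $2$-form, at least quadratic in $d\eta$.

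I would then run Joyce's Picard iteration: $\eta_0 = 0$, and $\eta_{j+1}$ is the coexact solution of $\Delta \eta_{j+1} = -\tfrac{3}{4} d^*\psi + N(d\eta_j)$. Choose $\delta \in (0, \delta_0)$ smaller than the square root of the smallest positive Hodge Laplacian eigenvalue on $X$. Theorem \ref{hodgedecompthm} together with the weighted Fredholm theory of \cite{lockhart87,MP} provides a bounded inverse of $\Delta$ on the coexact part of $\sobd{k}(\Lambda^{2})$. The initial right-hand side is compactly supported; inductively, since $\tv$ is cylindrical and torsion-free on $M_\infty$, the nonlinearity there depends only on $d\eta_j$, so $d\eta_j \in \sobd{k}(\Lambda^3)$ implies $N(d\eta_j) \in \sobd{k-1}(\Lambda^2)$, and the iterate furnished by the Fredholm inverse has $d\eta_{j+1} \in \sobd{k+1}(\Lambda^3)$, decaying exponentially with all derivatives.

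The core task is to establish uniform bounds
\[
\|d\eta_j\|_{L^2} \le C_1 s^4, \quad \|d\eta_j\|_{C^0} \le C_2 s^{1/2}, \quad \|\nabla d\eta_j\|_{L^{14}} \le C_3,
\]
together with Cauchy estimates for the successive differences $d\eta_{j+1} - d\eta_j$. Following Joyce, one combines local elliptic $L^p$-estimates for $\Delta$ with the $7$-dimensional Sobolev embeddings --- notably $L^{14}_{1} \hookrightarrow C^0$, which accounts for the appearance of the $L^{14}$ norm in hypothesis \textup{(a)} --- together with the quadratic structure of $N$. The assumptions \textup{(b)} and \textup{(c)} enter through local rescaling in balls of size $O(s)$: bounded curvature and an injectivity radius bound in units of $s$ make the constants in the local elliptic estimates uniform in the parameter $s$. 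Because $M$ has bounded geometry along its end, all of Joyce's local arguments carry over verbatim.

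The one genuinely new point, and the main obstacle, is to propagate exponential decay through the iteration while keeping the unweighted constants $C_i$ independent of $j$. I would do this by running the iteration simultaneously in the unweighted Sobolev space (for Joyce's convergence estimates) and in the weighted space $\sobd{k+2}$ (for decay), observing that the coexact inverse of $\Delta$ provided by Theorem \ref{hodgedecompthm} is bounded in both settings and applies to the same $\eta_{j+1}$. Passing to the limit $\eta = \lim \eta_j$, the form $d\eta$ satisfies \eqref{eacdetasmalleq} with $K$ depending only on $\mu,\nu,\lambda$; it decays exponentially with all derivatives by standard elliptic bootstrapping applied to $\Delta \eta = f$ with exponentially decaying $f$; and $\varphi = \tv + d\eta$ is a torsion-free \gtstr{} because $\|d\eta\|_{C^0}$ is small enough to keep $\varphi$ pointwise stable.
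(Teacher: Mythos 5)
Your reduction to the nonlinear elliptic equation and the Picard iteration in the EAC Hodge-theoretic framework is essentially the first half of the paper's proof (Propositions \ref{suffprop} and \ref{estprop}), and you correctly single out exponential decay as the genuinely new difficulty. But your proposed resolution of that difficulty has a real gap. Running the iteration ``simultaneously'' in $\sobd{k}$ and in the unweighted spaces only tells you that each individual iterate $d\eta_j$ decays exponentially; it does not make the limit decay exponentially. The Cauchy estimates inherited from Joyce are in the unweighted $L^2$, $L^{14}_1$ and $C^0$ norms only, so the limit $\chi=\lim d\eta_j$ is a priori just a closed form satisfying \eqref{chismalleq} and the limiting equation \eqref{chi1eq}; the weighted norms $\norm{d\eta_j}_{\sobd{k}}$ could blow up as $j\to\infty$ unless you also prove a contraction, or at least a uniform bound, in the weighted norm. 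That is not automatic: the weighted product estimates for the quadratic term $*dF(d\eta)$ put the weight $e^{\delta t}$ on one factor and require uniform $C^1$-type control of the other, and the available uniform bound $\norm{\nabla d\eta_j}_{L^{14}}<4C_1\lambda$ is bounded but not small, so the $O(s^{1/2})$ smallness of the $C^0$ norm does not obviously beat the relevant constants. Your closing claim that exponential decay of the limit follows ``by standard elliptic bootstrapping applied to $\Delta\eta=f$ with exponentially decaying $f$'' is circular: on the end the right-hand side is $*dF(\chi)$, and to know that this decays exponentially you must already know that $\chi$ does.

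The paper closes this gap by a different mechanism, and this is the actual content of its \S\ref{exp.dec}. After the unweighted iteration one first proves, by elliptic regularity and a covering argument (Corollary \ref{uniformcor}), only that $\chi$ decays \emph{uniformly} with all derivatives. Then, on the cylindrical end where the equation reduces to $d\chi=0$, $d^{*}\chi=*dF(\chi)$, the problem is rewritten as a flow in $t$ on a Banach space of forms on $X$ whose linearisation $L=\tbtmatrix{\dcsubx}{0}{0}{\dsubx}$ is injective and self-adjoint with spectral gap the square root of the first positive eigenvalue of the Hodge Laplacian on $X$; a stable/unstable splitting argument in the style of Morgan--Mrowka--Ruberman then shows that any solution converging to $0$ uniformly must converge at an exponential rate $\delta$ below that gap. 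Only after this can one write $\chi=d\eta$ via Theorem \ref{hodgedecompthm} and invoke Proposition \ref{suffprop}. A secondary point to be careful about: the coexact potential produced by Theorem \ref{hodgedecompthm} is not itself exponentially decaying but lies in $\sobd{k+1}(\Lambda^{2})\oplus\rho\harm^{2}_{\infty}$, i.e.\ it is only asymptotic to a translation-invariant harmonic $2$-form; this is precisely the hypothesis under which the integration by parts behind Proposition \ref{suffprop} is justified, and your ``Coulomb gauge'' formulation glosses over it.
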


\begin{rmk}
\label{cptperturbrmk}
The difference between Theorem~\ref{eacperturbthm}
and~\cite[Theorem~11.6.1]{joyce00} is that $M$ is now non-compact with a
cylindrical end and we made appropriate assumptions on $\tv$, $\psi$ away
from a compact piece of~$M$ and are claiming an EAC property of the
resulting~$\varphi$.
On the other hand, formally, taking the cross-section $X^6$ to be empty
(hence $M$ being compact) recovers the statement of
\cite[Theorem~11.6.1]{joyce00}. 
\end{rmk}

\begin{rmk}
The fact that $d\eta$ is {\em exponentially} decaying is more important than
its precise rate of decay. We shall need to choose the rate
$\delta > 0$ so that $\delta^{2}$ is smaller than any non-zero
eigenvalue of the Hodge Laplacian on $X$.
It should be easy to modify the proof of the theorem to allow $\tv$ to be EAC
and $\psi$ to be exponentially decaying. In that case one would also need
$\delta$ to be smaller than the decay rates of $\tv$ and~$\psi$.
\end{rmk}

We wish to find an exact exponentially asymptotically decaying
$3$-form $d\eta$ such that $\tv + d\eta$ is torsion-free.
First we show that for $\tv + d\eta$ to
be torsion-free it suffices to show that $\eta$ is a solution of a certain
non-linear elliptic equation, which was also used by Joyce~\cite{joyce00} in
the compact case, and find a solution for this equation by a
contraction-mapping argument. The details of this are complicated, but largely
carry over from argument for the compact case worked out
in~\cite[Ch.11]{joyce00}. We initially obtain, adapting the method
of~\cite[Ch.11]{joyce00}, a closed 3-form $\chi$, so that $\phi+\chi$ is a
torsion-free \gtstr, and use elliptic regularity to show that the solution
$\chi$ is smooth and uniformly decaying along the end $M_\infty$ as
$t\to\infty$. Then, and this is an additional argument required for an EAC
manifold, we prove that the solution decays {\em exponentially}. This also
ensures that $\chi$ is exact, which will complete the proof of
Theorem~\ref{eacperturbthm}.

\subsection{Contraction mapping argument}

The proposition below is an asymptotically cylindrical version of
\cite[Theorem $10.3.7$]{joyce00}.

\begin{prop}
\label{suffprop}
There is an absolute constant $\epsilon_{1}>0$ such that the following holds.
Let $M^{7}$ be an EAC manifold, $\tv$ a closed EAC \gtstr{} on $M$ and $\psi$
an exponentially decaying $3$-form such that
$\norm{\psi}_{C^{0}} < \epsilon_{1}$ and $d^{*}\psi = d^{*}\tv$.
Suppose that $\eta$ is $2$-form 
asymptotic to a translation-invariant harmonic form, and that
$\norm{d\eta}_{C^{0}} < \epsilon_{1}$. Suppose further that
\begin{equation}
\label{suff2eq}
\triangle \eta = d^{*}\psi + d^{*}(f\psi) + *dF(d\eta) ,
\end{equation}
where the function $f$ is the point-wise inner product
$\third \inner{d\eta, \tv}$ and $F$ denotes
the quadratic and higher order parts, at $\tv$, of the non-linear fibre-wise
map $\Theta : \varphi \mapsto *_{\varphi}\varphi$ from \gtstr s to 4-forms.
Then $\tv + d\eta$ is a torsion-free EAC \gtstr{} on~$M$.
\end{prop}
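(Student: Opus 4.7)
The proof splits into three tasks: showing that $\varphi := \tv + d\eta$ (i)~is a $G_{2}$-structure, (ii)~is torsion-free, and (iii)~is EAC. Task~(i) is immediate from the fact that stability of a 3-form is an open pointwise condition on $\Lambda^{3}T^{*}M$: for an absolute constant $\epsilon_{1}$ smaller than some universal threshold, the hypothesis $\|d\eta\|_{C^{0}}<\epsilon_{1}$ guarantees that $\tv + d\eta$ remains stable at every point and therefore induces a $G_{2}$-structure. Since $d\tv = 0$ and $d(d\eta) = 0$, $\varphi$ is automatically closed, so (ii) reduces to the coclosedness condition $d\Theta(\varphi) = 0$.

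For~(ii), Taylor-expand the fibre-wise non-linear map $\Theta$ at $\tv$:
\[
\Theta(\tv + d\eta) \;=\; \Theta(\tv) \;+\; L_{\tv}(d\eta) \;+\; F(d\eta),
\]
where $L_{\tv}$ is the linearisation and $F$ is the quadratic-and-higher-order remainder as in the statement. The standard $G_{2}$-irreducible decomposition $\Lambda^{3} = \Lambda^{3}_{1} \oplus \Lambda^{3}_{7} \oplus \Lambda^{3}_{27}$, in which $\pi_{1}(d\eta)$ is a scalar multiple of $\tv$ proportional to $f$, gives an identity of the form
\[
L_{\tv}(d\eta) \;=\; {*_{\tv}}d\eta \;+\; {\textstyle\frac{1}{3}} f\, {*_{\tv}}\tv \;-\; 2\,{*_{\tv}}\pi_{27}(d\eta)
\]
up to the conventional normalisation (cf.\ the analogous identity used in \cite[Ch.~10]{joyce00}). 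Applying $d$ to the expansion and using the 7-dimensional identity $d\,{*_{\tv}}\beta = -{*_{\tv}}d^{*}\beta$ on 3-forms $\beta$ together with the hypothesis $d^{*}\tv = d^{*}\psi$, the equation $d\Theta(\varphi) = 0$ rearranges (after applying $*_{\tv}$) into $d^{*}d\eta = d^{*}\psi + d^{*}(f\psi) + {*}dF(d\eta)$. Adding the gauge term $dd^{*}\eta$, which affects only the coexact part of $\eta$ and not $d\eta$ itself, produces the Laplace form~\eqref{suff2eq}. Thus a solution of~\eqref{suff2eq} renders $\varphi$ torsion-free.

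For~(iii), the EAC property requires $d\eta$ to decay exponentially. On the cylindrical end $M_{\infty}$ write $\eta = \eta_{\infty} + \xi$, where $\eta_{\infty}$ is the translation-invariant harmonic form to which $\eta$ is asymptotic and $\xi \to 0$; since harmonic translation-invariant forms on a cylinder pull back to Hodge-harmonic forms on $X$ and are hence closed, $d\eta = d\xi$ on $M_{\infty}$. Equation~\eqref{suff2eq} on the end becomes the elliptic equation $\triangle\xi = d^{*}\psi + d^{*}(f\psi) + {*}dF(d\xi)$, whose right-hand side is exponentially decaying in $\psi$ and at least quadratic in $d\xi$. Choose a weight $\delta>0$ smaller than the asymptotic decay rates of $\tv$ and of $\psi$, and with $\delta^{2}$ below the first positive eigenvalue of the Hodge Laplacian on the cross-section~$X$. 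The weighted-space Fredholm theory underlying Theorem~\ref{hodgedecompthm} then forces a solution $\xi$ with exponentially decaying source and $\xi \to 0$ to lie in $\sobd{k}$; a bootstrap absorbing $F(d\xi) = O(e^{-2\delta t})$ combined with elliptic regularity upgrades this to smoothness with all derivatives exponentially decaying, which is the EAC condition for $\varphi$. The principal technical obstacle is precisely this last step: Joyce's compact treatment covers~(ii) with only notational modification, but producing a genuine exponential rate on the non-compact end — rather than merely uniform convergence — requires coupling the nonlinear term to the Fredholm theory in the weighted spaces $\sobd{k}(\Lambda^{m})$ and iterating carefully, with consistent choices of weight at each step.
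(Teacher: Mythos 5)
The central step of your argument---the claim in~(ii) that the torsion-free condition $d\Theta(\tv+d\eta)=0$ ``rearranges into'' $d^{*}d\eta = d^{*}\psi + d^{*}(f\psi) + *dF(d\eta)$, so that \eqref{suff2eq} is just this equation with a harmless gauge term $dd^{*}\eta$ added---is not correct, and it is exactly where the content of the proposition lives. Writing the linearisation of $\Theta$ at $\tv$ as $*\bigl(\tfrac43\pi_1+\pi_7-\pi_{27}\bigr)=*\bigl(-\mathrm{id}+\tfrac73\pi_1+2\pi_7\bigr)$ and using $\tfrac73\pi_1(d\eta)=f\tv$, the condition $d\Theta(\tv+d\eta)=0$ becomes (up to sign conventions for $F$)
\[
d^{*}d\eta \;=\; d^{*}\psi \;+\; d^{*}(f\tv)\;+\;2\,d^{*}\pi_7(d\eta)\;-\;*\,dF(d\eta),
\]
which differs from the right-hand side of \eqref{suff2eq} by $d^{*}\bigl(f(\tv-\psi)\bigr)$, by the term $2d^{*}\pi_7(d\eta)$, and by the sign of the $F$-term. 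Equation \eqref{suff2eq} is a deliberately \emph{modified} equation (its right-hand side is $d^{*}$ of a $3$-form and is quadratically small, which is what makes the iteration in Proposition~\ref{estprop} converge), and the whole point of the proposition---this is Joyce's Theorem~10.3.7---is that a solution of the modified equation nevertheless has vanishing torsion. That deduction is global, not pointwise: one first shows $dd^{*}\eta=0$ because $dd^{*}\eta=\triangle\eta-d^{*}d\eta$ is both exact and coexact, which already needs an integration by parts; one then finds that the residual torsion equals $d\bigl(f*(\tv-\psi)\bigr)+2d*\pi_7(d\eta)$ and kills it by an $L^{2}$ pairing, another integration by parts, and absorption using $\norm{\psi}_{C^{0}}<\epsilon_1$ and $\norm{d\eta}_{C^{0}}<\epsilon_1$. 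Your proposal contains no integration by parts and never invokes $\norm{\psi}_{C^{0}}<\epsilon_1$; since that hypothesis is what closes the final estimate, its absence is a reliable sign the argument is incomplete. The paper's own (two-line) proof consists precisely of checking that these integrations by parts from the compact case still converge on an EAC manifold because $\eta$ is bounded and $d\eta$ decays.

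A secondary point: your part~(iii) imports into this proposition the whole exponential-decay analysis, which in the paper is carried out \emph{before} the proposition is applied (it is the content of \S\ref{exp.dec}, proved via a hyperbolic-fixed-point argument for a flow on the cross-section, precisely because the solution is only known a priori to decay uniformly rather than to lie in any weighted space---so ``the Fredholm theory forces it into $\sobd{k}$'' begs the question). Within Proposition~\ref{suffprop} itself the decay of $d\eta$ is effectively hypothesis: $\eta$ is asymptotic to a translation-invariant harmonic form $\eta_\infty$, which is closed, so $d\eta=d(\eta-\eta_\infty)$ decays and $\tv+d\eta$ is EAC once $\tv$ is; what the proposition must supply is only the vanishing of the torsion.
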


\begin{proof}
The proof for the compact case in \cite{joyce00} relies on integrating by
parts.
It is easy to check that, in the asymptotically cylindrical setting,
the necessary integrals still converge provided that $\eta$ is bounded
and $d\eta$ decays, so we can still use
(\ref{suff2eq}) as a sufficient condition for the torsion to vanish.
\end{proof}

A key part in the proof of the existence of solutions for~\eqref{suff2eq} on a
compact 7-manifold is the contraction-mapping argument
\cite[Propn.~11.8.1]{joyce00}. We observe that it can easily be adapted to the
EAC case.

\begin{prop}
\label{estprop}
Let $(\Omega,\omega)$ be a \cystr{} on a compact manifold $X^{6}$
and $\mu, \nu, \lambda$ be positive constants.
Then there exist positive constants $\kappa, K, C_{1}$
such that whenever $0 < s < \kappa$ the following is true.

Let $M^{7}$ be a manifold with cylindrical end and cross-section $X$,
and $\tv$ a closed EAC \gtstr{} on $M$ with asymptotic
limit $\Omega + dt \wedge \omega$. Suppose that $\psi$ is a smooth
exponentially decaying $3$-form on $M$
satisfying $d^{*}\psi = d^{*}\tv$, and that
{
\renewcommand{\theenumi}{\alphenumi}
\begin{enumerate}
\item \label{lambdaitem} $\norm{\psi}_{L^{2}} < \lambda s^{4}$,
$\norm{\psi}_{C^{0}} < \lambda s^{1/2}$,
$\norm{d^{*}\psi}_{L^{14}} < \lambda$,
\item the injectivity radius is $> \mu s$,
\item the Riemannian curvature $R$ satisfies $\norm{R}_{C^{0}} < \nu s^{-2}$.
\end{enumerate}
}
Then there is a sequence $d\eta_{j}$ of smooth exponentially decaying
exact $3$-forms with $d\eta_{0} = 0$ satisfying the equation
\begin{equation}
\label{inducteq}
\triangle \eta_{j} = d^{*}\psi + d^{*}(f_{j-1}\psi) + *dF(d\eta_{j-1}),
\end{equation}
where $f_{j} = \third \inner{d\eta_j, \tv}$ for each $j > 0$.
The solutions satisfy the inequalities
\begin{enumerate}
\item \label{l2item}
$\norm{d\eta_{j}}_{L^{2}} < 2\lambda s^{4}$,
\item \label{l14item}
$\norm{\nabla d\eta_{j}}_{L^{14}} < 4C_{1}\lambda$,
\item \label{c0item}
$\norm{d\eta_{j}}_{C^{0}} < K s^{1/2}$,
\item \label{l2bitem}
$\norm{d\eta_{j+1} - d\eta_{j}}_{L^{2}} < 2^{-j}\lambda s^{4}$,
\item \label{l14bitem}
$\norm{\nabla (d\eta_{j+1} - d\eta_{j})}_{L^{14}} < 4 \cdot 2^{-j}C_{1}\lambda$,
\item \label{c0bitem}
$\norm{d\eta_{j+1} - d\eta_{j}}_{C^{0}} < 2^{-j}K s^{1/2}$.
\end{enumerate}
\end{prop}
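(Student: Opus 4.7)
The argument is an EAC adaptation of the contraction-mapping scheme of \cite[Propn.~11.8.1]{joyce00}, proved by simultaneous induction on~$j$ with base case $d\eta_0 = 0$, for which estimates (i)--(iii) reduce to the hypotheses on $\psi$. For the inductive step, observe that the right-hand side of~\eqref{inducteq} is coexact (since on a 7-manifold $*d = -d^{*}*$ acting on 4-forms) and decays exponentially by the inductive hypothesis on $d\eta_{j-1}$. Fix a weight $\delta > 0$ smaller than the EAC rate of $\tv$ and than $\sqrt{\lambda_1}$, where $\lambda_1$ is the smallest positive eigenvalue of the Hodge Laplacian on~$X$. Theorem~\ref{hodgedecompthm} then makes $\triangle$ invertible on the coexact part of $\sobd{k}(\Lambda^{2})$, producing a smooth coexact solution $\eta_j \in \sobd{k+2}(\Lambda^{2})$; in particular $d\eta_j$ is exact and exponentially decaying with all derivatives by elliptic regularity.

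The $L^2$ bounds (i) and (iv) come from pairing~\eqref{inducteq} with $\eta_j$ and with $\eta_{j+1} - \eta_j$ respectively, and integrating by parts, which is legitimate thanks to the exponential decay. After using $d^*\eta_j = 0$ together with the pointwise bounds $|f_{j-1}| \le C|d\eta_{j-1}|$ and $|F(\xi)| \le C|\xi|^2$ near the origin, Cauchy--Schwarz yields $\|d\eta_j\|_{L^2} < 2\lambda s^4$ provided the inductive $C^0$ bound on $d\eta_{j-1}$ holds; the quadratic nature of $F$ gives, for the differences, a Lipschitz factor of size $O(s^{1/2})$ via~(iii), hence~(iv) once $\kappa$ is sufficiently small. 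The $L^{14}$ and $C^0$ estimates (ii), (iii), (v), (vi) are obtained by rescaling to $\hat g = s^{-2}g_{\tv}$, on which the injectivity radius exceeds $\mu$ and the curvature is uniformly bounded by~$\nu$; this makes interior $L^{14}_1$ elliptic regularity applied to~\eqref{inducteq} and the Sobolev embedding $L^{14}_1 \hookrightarrow C^0$ (valid in dimension~$7$) hold with constants depending only on $\mu$ and~$\nu$. Tracking the scaling of each norm under $g \mapsto \hat g$ then recovers the stated $s$-powers.

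The main obstacle is obtaining constants that are independent of both $s$ and~$j$. As in the compact case, this is handled by rescaling to the unit-scale metric $\hat g$ and invoking the injectivity radius and curvature hypotheses to make the standard Sobolev and interior elliptic constants uniform. The genuinely new ingredient in the EAC setting is ensuring that each $\eta_j$ remains exponentially decaying, so that the integration by parts above converges and the iteration can continue; this is precisely what the EAC Hodge decomposition Theorem~\ref{hodgedecompthm} provides, once $\delta$ is chosen below the spectral threshold of~$X$.
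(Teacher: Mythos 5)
Your proposal follows essentially the same route as the paper's proof, which itself is a sketch deferring the estimates to \cite[Propn.~11.8.1]{joyce00}: induction on $j$, the EAC Hodge decomposition Theorem~\ref{hodgedecompthm} (with $\delta$ below the spectral threshold of $X$ and below the decay rates) to produce the coexact solution with exponentially decaying $d\eta_j$, integration by parts for \ref{l2item} and \ref{l2bitem}, and rescaled interior estimates for the remaining bounds. One small correction: Theorem~\ref{hodgedecompthm} only places the potential in $\sobd{k+1}(\Lambda^{2}) \oplus \rho\harm^{2}_{\infty}$, so $\eta_j$ itself need not decay --- it is merely asymptotic to a translation-invariant harmonic $2$-form --- but since the integration by parts only requires $\eta_j$ bounded and $d\eta_j$ decaying, this does not affect the argument.
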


\begin{proof}
The existence of the sequence $d\eta_j$ and the inequalities
\ref{l2item}--\ref{c0bitem} are proved inductively.
Take $\delta > 0$ smaller than the decay rates of $\tv$ and $\psi$
such that $\delta^{2}$ is smaller than any positive
eigenvalue of the Hodge Laplacian on~$X$, and let
$\rho$ be a cut-off function for the cylinder on $M$.
If $d\eta_{j-1}$ exists and satisfies the uniform estimate \ref{c0item}
then $F(d\eta_{j-1})$ is well-defined, and
the RHS of (\ref{inducteq}) is $d^{*}$ of a $3$-form that
decays with exponential rate $\delta$.
The EAC Hodge decomposition Theorem \ref{hodgedecompthm}
implies that there is a unique coexact solution
$\eta_{j} \in \sobd{k}(\Lambda^{2}) \oplus \rho \harm^{2}_{\infty}$
for all $k \geq 2$.

The induction step for the inequalities is proved
using exactly the same argument as in \cite[Propn.~11.8.1]{joyce00}.
\ref{l2item} and \ref{l2bitem} are proved using an integration by parts
argument, and since each $d\eta_{j}$ decays exponentially this is still 
justified when $M$ has cylindrical ends.

\ref{l14item}, \ref{c0item}, \ref{l14bitem} and \ref{c0bitem} are proved using
interior estimates, which do not require compactness.
\end{proof}

It follows that if $s$ is small, then $d\eta_{j}$ is a Cauchy sequence, in
each of the norms $L^2$, $L^{14}_1$ and $C^0$, and has a limit $\chi$ with
\begin{equation}
\label{chismalleq}
\norm{\chi}_{L^{2}} < K s^{4}, \;\:
\norm{\chi}_{C^{0}} < K s^{1/2}, \;\:
\norm{\nabla \chi}_{L^{14}} < K,
\end{equation}
for some $K > 0$. The form $\chi$ is closed, \ltwoorth{} to the space of
decaying harmonic forms $\harm^{3}_{+}$ and satisfies the equation
\begin{equation}
\label{chi1eq}
d^{*}\chi = d^{*}\psi + d^{*}(f\psi) + *dF(\chi) ,
\end{equation}
where $f = \third \inner{\chi, \tv}$.
We do not know \emph{a priori} that $\chi$ is the exterior derivative of a
bounded form, so we cannot yet apply Proposition \ref{suffprop} to show that
$\tv + \chi$ is torsion-free.

\subsection{Regularity}
 
We first show by elliptic regularity that $\chi$ is smooth and uniformly
decaying.

\begin{prop}
If $s$ is sufficiently small then $\chi \in \sobf{k}(\Lambda^{3})$ for all
$k \geq 0$.
\end{prop}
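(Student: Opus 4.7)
The plan is to rewrite equation~\eqref{chi1eq} using $d\chi=0$ as
\[
(d+d^{*})\chi \;=\; d^{*}\psi + d^{*}(f\psi) + *dF(\chi),
\]
with $f=\third\inner{\chi,\tv}$, and then to bootstrap this identity by elliptic regularity for the first-order elliptic operator $d+d^{*}$ on $\Lambda^{3}$, starting from the $\sobf{1}$ control already contained in~\eqref{chismalleq}. As a first check I would verify that the right-hand side already lies in $L^{14}$: for $s$ small the $C^{0}$-bound on~$\chi$ in~\eqref{chismalleq} keeps the derivative $F'(\chi)$ uniformly bounded, and since $F$ vanishes to second order at~$0$ the identity $dF(\chi)=F'(\chi)\cdot\nabla\chi$ combined with $\nabla\chi\in L^{14}$ yields $*dF(\chi)\in L^{14}$; the term $d^{*}(f\psi)$ lies in $L^{14}$ because $\psi$ is smooth with compact support while $f\in \sobf{1}$.

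The inductive step is then routine. Assuming $\chi\in\sobf{k}$ for some $k\geq 1$, the Sobolev embedding $\sobf{k}\subset C^{k-1}$ (valid since $\dim M=7<14$) puts $\chi$ into $C^{k-1}$; applied to the smooth pointwise nonlinearity $\Theta$ this gives $F(\chi)\in\sobf{k}$, while $f\psi\in\sobf{k}$ follows from the compact support of~$\psi$. Hence the right-hand side of the displayed equation lies in $\sobf{k}$, and first-order elliptic regularity for $d+d^{*}$ promotes $\chi$ to $\sobf{k+1}$. Iteration yields $\chi\in\sobf{k}$ for every $k\geq 0$.

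The main obstacle, relative to the compact case treated in~\cite[Ch.~11]{joyce00}, is that the interior elliptic estimates must be applied with constants uniform across the non-compact end of~$M$. Here the EAC hypothesis is crucial: hypotheses~\textup{(b)} and~\textup{(c)} of Theorem~\ref{eacperturbthm} endow $g_{\tv}$ with bounded geometry (injectivity radius bounded below and curvature bounded above), so the standard $L^{p}$ estimates for $d+d^{*}$ hold with constants independent of the basepoint on geodesic balls of fixed small radius. Once this uniformity is in place, Joyce's local regularity argument transfers essentially verbatim to the EAC setting.
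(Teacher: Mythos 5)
Your overall framework --- ellipticity of $d+d^{*}$ plus interior estimates whose constants are made uniform along the end by the bounded geometry of the EAC metric --- is the right one, and your closing paragraph about uniformity matches what the paper does. But the inductive step as written does not close, and this is a genuine gap rather than a presentational one. The right-hand side of your displayed equation contains $*dF(\chi)$, which by the chain rule (as you yourself note in your ``first check'') has the schematic form $F'(\chi)\cdot\nabla\chi$: it is of \emph{first order} in the unknown. Consequently, knowing $\chi\in\sobf{k}$ gives $F(\chi)\in\sobf{k}$ but only $*dF(\chi)\in\sobf{k-1}$ (and likewise $d^{*}(f\psi)\in\sobf{k-1}$, since $f=\third\inner{\chi,\tv}$ carries a factor of $\nabla\chi$ after differentiation), so first-order elliptic regularity for $d+d^{*}$ returns only $\chi\in\sobf{k}$ --- exactly what was assumed. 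The assertion that ``the right-hand side lies in $\sobf{k}$'' silently discards the exterior derivative in $*dF(\chi)$. The bootstrap therefore gains nothing at any stage; already the step $\sobf{1}\to\sobf{2}$ would require $\nabla^{2}\chi\in L^{14}$ to control the source, which is precisely what is to be proved.

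The paper's proof is organised precisely to get around this. It writes $*dF(\chi)=P(\chi,\nabla\chi)+Q(\chi)$, with $P(u,v)$ linear in $v$ and of linear order in $u$, and $Q(u)$ of quadratic order in $u$, and then absorbs the first-order part (together with $d^{*}(f(\cdot)\psi)$) into the \emph{operator}, viewing $\chi$ as a solution $\beta$ of the linear elliptic system
\begin{equation*}
d^{*}\beta-P(\chi,\beta)-d^{*}(f(\beta)\psi)=d^{*}\psi+Q(\chi),\qquad d\beta=0 .
\end{equation*}
Now the coefficients of the operator and the source term depend on $\chi$ only pointwise, not on its derivatives; smallness of $\norm{\chi}_{C^{0}}$ and $\norm{\psi}_{C^{0}}$ for small $s$ keeps the operator a small perturbation of $d+d^{*}$, hence elliptic, and if $\chi\in\sobf{k}$ then both the coefficients and the right-hand side are $\sobf{k}$, so Morrey's interior estimates yield $\sobf{k+1}$ and the induction genuinely advances. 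To repair your argument you need this (or an equivalent) reorganisation: the term linear in $\nabla\chi$ must be treated as part of the principal operator, not as a source.
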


\begin{proof}
Since $F(\chi)$ depends only point-wise on $\chi$ and is of quadratic order
we can write
\begin{equation}
\label{expandfeq}
*dF(\chi) = P(\chi, \nabla \chi) + Q(\chi) ,
\end{equation}
where $P(u, v)$ is linear in $v$ and smooth of linear order in $u$, while
$Q(u)$ is smooth of quadratic order in $u$ for $u$ small. We can then rephrase
(\ref{chi1eq}) as stating that $\beta = \chi$ is a solution of
\begin{equation}
\label{chi2eq}
\begin{aligned}
d^{*}\beta - P(\chi, \beta) - d^{*}(f(\beta)\psi) &= d^{*}\psi + Q(\chi) ,
\\
d \beta &= 0 ,
\end{aligned}
\end{equation}
where $f(\beta) = \third \inner{\beta,\tv}$. The LHS is a linear
partial differential operator acting on $\beta$. Its symbol depends
on $\chi$ and $\psi$, but not on their derivatives.
By taking $s$ small we can ensure that $\chi$ and $\psi$ are both
small in the uniform norm (see (\ref{chismalleq}) and
hypothesis~\ref{lambdaitem} 
in Proposition \ref{estprop}) so that the equation is elliptic.

Now suppose that $\chi$ has regularity $\sobf{k}$. Then so do the coefficients
and the RHS of (\ref{chi2eq}). Because
$\beta = \chi \in \sobf{1}(\Lambda^{3})$ is a solution of (\ref{chi2eq}),
standard interior estimates
(see Morrey \cite[Theorems 6.2.5 and 6.2.6]{morrey66})
imply that it must have regularity $\sobf{k+1}$ locally.
Moreover, because the metric is asymptotically cylindrical the local
bounds are actually uniform, so in fact
$\chi$ is globally $\sobf{k+1}$. The result follows by induction on $k$.
\end{proof}

In the next result and in \S\ref{exp.dec}, we interchangeably consider $\chi$
on the cylindrical end $M_{\infty}=\bbrp\times X$ as a family of sections
over~$X$ depending on a real parameter~$t$.
\begin{cor}
\label{uniformcor}
If $s$ is sufficiently small then on the cylindrical end of $M$ the form
$\chi$ decays, with all derivatives, uniformly on~$X$ as $t \to \infty$.
\end{cor}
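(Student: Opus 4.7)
The plan is to deduce uniform decay on the cylindrical end from the global $\sobf{k}$-regularity established in the previous proposition, using Sobolev embedding on unit-length pieces of the cylinder where the geometry is essentially translation-invariant.

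First I would fix a large $k$, so that Sobolev embedding $L^{14}_k\hookrightarrow C^r$ holds for any prescribed $r\ge 0$; since $M$ has bounded geometry (in particular on the end $M_\infty$, where the metric is asymptotic to a product metric), this embedding has a uniform constant on unit balls, and in fact on cylindrical slabs of the form $X\times[T,T+1]\subset M_\infty$ the embedding constant $C$ is independent of~$T$. This is exactly the observation already invoked before Theorem~\ref{hodgedecompthm} in the paper.

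Next, because $\chi\in\sobf{k}(\Lambda^3)$ globally on $M$, the contribution of the region $\{t>T\}$ to the (finite) $\sobf{k}$-norm must tend to zero as $T\to\infty$. Consequently, for any $T\ge 0$,
\begin{equation*}
\|\chi\|_{L^{14}_k(X\times[T,T+1])}\longrightarrow 0\quad\text{as }T\to\infty.
\end{equation*}
Applying the uniform Sobolev embedding on each slab, one gets
\begin{equation*}
\|\chi\|_{C^r(X\times[T,T+1])}\le C\|\chi\|_{L^{14}_k(X\times[T,T+1])}\longrightarrow 0,
\end{equation*}
and since every point with $t\ge T+\tfrac12$ lies inside such a slab for some $T'\in[T,T+1]$, this gives $\|\chi\|_{C^r(\{t\ge T\})}\to 0$. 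Taking $k$ as large as required lets $r$ be arbitrary, so $\chi$ together with all its derivatives tends to zero uniformly on~$X$ as $t\to\infty$, as claimed.

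I do not expect any serious obstacle: the only points to be careful about are (i) that the Sobolev embedding constant really is uniform along the cylinder, which follows from the bounded geometry already used in the paper, and (ii) that covariant derivatives taken with the EAC metric $g_{\tv}$ agree, up to uniformly bounded factors, with those taken with the asymptotic product metric, so that $C^r$-smallness with respect to either metric implies it with respect to the other.
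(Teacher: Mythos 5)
Your argument is correct and is essentially the paper's own proof: both deduce uniform decay from the global finiteness of the $\sobf{k+1}$-norm by covering the end with uniformly-sized pieces (the paper uses balls $B(x_i,r)$ of bounded multiplicity, you use unit slabs $X\times[T,T+1]$), applying a Sobolev embedding whose constant is uniform along the cylinder by bounded geometry, and observing that the tail contributions to a convergent norm must tend to zero. The two points you flag at the end are exactly the ones the paper relies on implicitly, so there is nothing further to add.
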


\begin{proof}
Because $M$ is EAC, standard Sobolev embedding results imply that
we can pick $r > 0$ such that $M$ is covered
by balls $B(x_{i}, r)$ with the following property:
\[ \norm{\chi|_{B(x_{i}, r)}}_{C^{k}} <
C \norm{\chi|_{B(x_{i}, 2r)}}_{L^{14}_{k+1}} , \]
where the constant $C > 0$ is independent of $x_{i} \in M$.
If we ensure that each point of $M$ is contained in no more than $N$ of the
balls $B(x_{i}, 2r)$ then
\[ \sum_{i} \norm{\chi|_{B(x_{i}, r)}}_{C^{k}}^{14} <
N C^{14} \norm{\chi}_{L^{14}_{k+1}}^{14} . \]
As the sum is convergent the terms tend to $0$, i.e. the $k$-th derivatives
of $\chi$ decay uniformly.
\end{proof}

\subsection{Exponential decay}
\label{exp.dec}

To complete the proof of Theorem \ref{eacperturbthm} it remains to
prove that the rate of decay of $\chi$ is exponential.
Then $\chi = d\eta$ for some exponentially asymptotically translation-invariant
$\eta$ by the Hodge decomposition Theorem \ref{hodgedecompthm}, since $\chi$ is
closed and \ltwoorth{} to the decaying harmonic forms $\harm^{3}_{+}$.
Proposition \ref{suffprop} then implies that $\tv + d\eta$ is torsion-free,
so that $d\eta$ has all the desired properties.

By hypothesis, $\tv$ is exactly cylindrical on the cylindrical end
$M_\infty=\{t \geq 0\}$ of~$M$, and $\psi$ is supported in the compact piece
$M_0=\{t \leq 0\}$.
Thus on the cylindrical end the equation (\ref{chi1eq}) for $\chi$ simplifies
to
\begin{equation}
\label{chi3eq}
d^{*}\chi = *dF(\chi) .
\end{equation}
On the cylindrical end $t > 0$ we can write
\begin{align*}
\chi &= \sigma + dt \wedge \tau , \\
F(\chi) &= \beta + dt \wedge \gamma ,
\end{align*}
where $\tau \in \Omega^{2}(X)$, $\sigma, \gamma \in \Omega^{3}(X)$ and
$\beta \in \Omega^{4}(X)$ are forms on the cross-section $X$ depending on the
parameter $t$. Let $\dsubx$ denote the exterior derivative on $X$.
Then the conditions $d\chi = 0$ and (\ref{chi3eq}) are
equivalent to
\begin{subequations}
\begin{align}
\label{cylchi1eq} \dsubx\sigma &= 0 , \\
\label{cylchi2eq} \contrat \sigma &= \dsubx\tau , \\
\label{cylchi3eq} \dsubx {*} \tau &= -\dsubx\beta , \\
\label{cylchi4eq} \contrat {*} \tau &=
- \dsubx {*} \sigma - \contrat \beta + \dsubx\gamma .
\end{align}
\end{subequations}
(\ref{cylchi2eq}) implies that $\sigma(t_{1}) - \sigma(t_{2})$ is exact for
any $t_{1}, t_{2} > 0$. Since the exact forms form a closed subspace of
the space of $3$-forms on $X$ (in the $L^{2}$ norm) and $\sigma \to 0$ as
$t \to \infty$ it follows that $\sigma$ is exact for all $t > 0$.
Similarly (\ref{cylchi4eq}) implies that $*\tau - \beta$ is exact for
all $t > 0$.
(The equations (\ref{cylchi1eq}) and (\ref{cylchi3eq}) are thus redundant.)
The path $(\sigma, \tau)$ is therefore constrained to lie in the space
\begin{equation*}
\calf = \{ (\sigma, \tau) \in \dsubx\sob{1}(\Lambda^{2}T^{*}X) \times
L^{2}(\Lambda^{2}T^{*}X) : *\tau - \beta \textrm{ is exact} \} .
\end{equation*}

\begin{rmk}
We have not assumed that $\chi$ is $L^{1}$ on $M$.
\end{rmk}

$\beta$ is a function of $\sigma$ and $\tau$, and it is of quadratic order.
The implicit function theorem applies to show that if we replace $\calf$ with
a small neighbourhood of $0$ then it is a Banach manifold with tangent space
\begin{equation*}
T_{0}\calf = B =  \dsubx\sob{1}(\Lambda^{2}T^{*}X) \times
\dcsubx\sob{1}(\Lambda^{3}T^{*}X) .
\end{equation*}

We can now interpret (\ref{cylchi2eq}) and (\ref{cylchi4eq}) as a flow
on $\calf$, or equivalently near the origin in $B$.
By the chain rule we can write $\contrat \beta$ as
\begin{equation*}
\contrat \beta = A_{2}\left(\contrat\tau\right) +
	A_{3}\left(\contrat\sigma\right) + \beta',
\end{equation*}
where $A_{m}$ is a linear map from $\Lambda^{m}T^{*}X$ to $\Lambda^{4}T^{*}X$,
determined point-wise by $\sigma$ and $\tau$ and of linear order, while
$\beta'$ is a $4$-form determined point-wise by $\sigma$ and $\tau$ and of
quadratic order. In particular, for large $t$ the norm of $A_{2}$ is small,
and (\ref{cylchi2eq}) and (\ref{cylchi4eq}) are equivalent to
\begin{equation}
\label{floweq}
\begin{aligned}
\contrat \sigma &= \dsubx\tau , \\
\contrat\tau  &= (id + *A_{2})^{-1}
(\dcsubx \sigma - *A_{3}\dsubx\tau - *\beta' + *\dsubx\gamma) .
\end{aligned}
\end{equation}
The origin is a stationary point for the flow, and the linearisation of the
flow near the origin is given by the (unbounded) linear operator
$L = \tbtmatrix{\dcsubx}{0}{0}{\dsubx}$ on $B$.
Because $L$ is formally self-adjoint $B$ has an orthonormal basis of
eigenvectors. Also, $L$ is injective on $B$, so $B$ can be written as
a direct sum of subspaces with positive and negative eigenvalues,
\[ B = B_{+} \oplus B_{-} . \]
Then $\{e^{\mp t L} : t \geq 0 \}$ defines a continuous semi-group of bounded
operators on $B_{\pm}$.
If we let $\mu$ denote the smallest absolute value of the eigenvalues of $L$
then $e^{t\mu}e^{\mp t L}$ is uniformly bounded on $B_{\pm}$ for $t \geq 0$, so
the origin is a hyperbolic fixed point.
By analogy with the finite-dimensional flows, we expect that any solution of
(\ref{cylchi2eq}) and (\ref{cylchi4eq}) approaching the origin must do so at
an exponential rate.

A similar problem of exponential convergence for an infinite-dimensional flow
is considered by Mrowka, Morgan and Ruberman \cite[Lemma 5.4.1]{mmr94}.
Their problem is more general in that the linearisation of their flow has
non-trivial kernel, so that they need to consider convergence to a `centre
manifold' rather than to a well-behaved isolated fixed point. As a simple
special case we can prove the $L^{2}$ exponential decay for $\chi$.

\begin{prop}
Let $\delta > 0$ such that $\delta^{2}$ is smaller than any positive eigenvalue
of the Hodge Laplacian on $X$. Then $\chi$ is $L^{2}_{\delta}$.
\end{prop}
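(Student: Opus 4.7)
My plan is to show that $h(t):=\|\chi|_{\{t\}\times X}\|_{L^2(X)}$ decays exponentially faster than $e^{-\delta t}$ as $t\to\infty$; together with the $L^2$-boundedness of $\chi$ on the compact core $M_0$, this implies $\chi\in L^2_\delta$. First choose $T_0$ large enough that the compactly supported form $\psi$ vanishes on $\{t\geq T_0\}$ and, by Corollary~\ref{uniformcor}, $\|\chi(t)\|_{C^0(X)}\leq\epsilon$ there, with $\epsilon>0$ to be fixed later. On this end equation~(\ref{chi1eq}) reduces to~(\ref{chi3eq}), equivalent to the autonomous flow~(\ref{floweq}),
\[
\partial_t u=Lu+N(u,\nabla_X u),\qquad u=(\sigma,\tau)\in B,
\]
where $L=\tbtmatrix{\dcsubx}{0}{0}{\dsubx}$ and $N$ is at least quadratic in $u$ and at most linear in $\nabla_X u$.

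Next I exploit the spectral structure of $L$. Since $L^2$ restricts to $\dsubx\dcsubx$ on the exact-$3$-form factor and $\dcsubx\dsubx$ on the coexact-$2$-form factor of~$B$, its spectrum equals the positive part of the spectrum of the Hodge Laplacian on~$X$. Hence $L$ has discrete spectrum $\{\pm\sqrt{\lambda_k}\}$ with a complete orthonormal eigenbasis of~$B$. Writing $\mu$ for the smallest eigenvalue modulus (so $\mu>\delta$ by hypothesis), I split $B=B_+\oplus B_-$ into positive and negative spectral subspaces with projections $\pi_\pm$; then $\{e^{sL}|_{B_-}\}_{s\geq 0}$ and $\{e^{-sL}|_{B_+}\}_{s\geq 0}$ are contracting semigroups of norm at most $e^{-s\mu}$. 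Using $u(t)\to 0$ in $B$ as $t\to\infty$ (established via Corollary~\ref{uniformcor}), the Duhamel formula supplies
\begin{gather*}
u_-(t)=e^{(t-T_0)L}u_-(T_0)+\int_{T_0}^t e^{(t-s)L}\pi_-N(u(s))\,ds,\\
u_+(t)=-\int_t^\infty e^{(t-s)L}\pi_+N(u(s))\,ds.
\end{gather*}

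Since $N$ is quadratic-or-higher in $u$ and linear in $\nabla_X u$, I have $\|N(u(s))\|_{L^2(X)}\leq C\|u(s)\|_{C^0(X)}\|u(s)\|_{L^2_1(X)}\leq C\epsilon\|u(s)\|_{L^2_1(X)}$. An interior elliptic estimate for the equation~(\ref{chi3eq}) on the translation-invariant cylindrical box $[s-1,s+1]\times X$ upgrades this to $\|N(u(s))\|_{L^2(X)}\leq C'\epsilon\bigl(\int_{s-1}^{s+1}h(\tau)^2\,d\tau\bigr)^{1/2}$, with constants uniform in~$s$ because $\tv$ is exactly cylindrical on $\{t\geq T_0\}$. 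Feeding this into the Duhamel representation and using the semigroup bounds produces an integral inequality
\[
h(t)\leq Ae^{-\mu(t-T_0)}+C_1\epsilon\int_{T_0}^\infty e^{-\mu|t-s|}h(s)\,ds.
\]
For any fixed $\delta'\in(\delta,\mu)$ the kernel $e^{\delta'r-\mu|r|}$ lies in $L^1(\bbr)$; multiplying the inequality through by $e^{\delta't}$ and choosing $\epsilon$ small enough that $C_1\epsilon$ times this $L^1$-norm is less than $1$ gives $\sup_{t\geq T_0}e^{\delta't}h(t)<\infty$ by a standard contraction argument. Hence $h(t)\leq Be^{-\delta't}$, and since $\delta'>\delta$ the conclusion $\chi\in L^2_\delta$ follows.

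The main obstacle is the presence of the derivative $\nabla_X u$ inside~$N$, arising from the $*A_3\dsubx\tau$ term in~(\ref{floweq}): to close the Duhamel argument one must upgrade cross-sectional $L^2(X)$ bounds on~$u$ to $L^2_1(X)$ bounds uniformly along the end. This works precisely because $\tv$ is exactly cylindrical on $\{t\geq T_0\}$, so the local elliptic constants do not degenerate as $t\to\infty$. The spectral-subspace splitting itself is the simpler strictly-hyperbolic version (no centre manifold, since $L$ has no zero eigenvalue on~$B$) of the infinite-dimensional flow argument in~\cite[Lemma~5.4.1]{mmr94}, to which the discussion preceding the proposition already alludes.
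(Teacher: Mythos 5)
Your argument is correct, and it shares the paper's setup exactly: the reduction to the flow \eqref{floweq} on the cylinder, the spectral splitting $B=B_+\oplus B_-$ for $L=\tbtmatrix{\dcsubx}{0}{0}{\dsubx}$ with gap $\mu>\delta$, and the use of Corollary~\ref{uniformcor} both to make the nonlinearity subordinate and to supply the boundary condition at $t=\infty$ that controls the unstable component. Where you diverge is in how the decay is extracted. The paper never writes a Duhamel formula: it differentiates the norms directly, obtaining $\dddt\lnorm{x_+}\geq\mu\lnorm{x_+}-k\lnorm{x}$ and $\dddt\lnorm{x_-}\leq-\mu\lnorm{x_-}+k\lnorm{x}$, observes that $\lnorm{x_+}-\lnorm{x_-}$ is then increasing and tends to $0$, hence $\lnorm{x_+}\leq\lnorm{x_-}$, and finishes with a scalar ODE comparison giving $\lnorm{x_-}=O(e^{(-\mu+2k)t})$. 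That route is more elementary (only the inner-product bound $\langle Lx_+,x_+\rangle\geq\mu\lnorm{x_+}^2$ is needed, no semigroups or variation of constants for an unbounded generator), while your mild-solution formulation is closer in spirit to \cite[Lemma~5.4.1]{mmr94} and would adapt more readily if $L$ had a kernel. Two smaller remarks. First, the step you flag as the main obstacle --- upgrading cross-sectional $L^2$ control of $u$ to $L^2_1$ control via interior elliptic estimates on cylindrical boxes --- is avoidable: Corollary~\ref{uniformcor} gives uniform decay of \emph{all} derivatives, so $\norm{u(t)}_{C^1(X)}\to 0$ and one gets $\lnorm{N(u)}\leq k\lnorm{u}$ directly for $t$ large, which is exactly what the paper does; this also removes the local averaging $\bigl(\int_{s-1}^{s+1}h^2\bigr)^{1/2}$ and the attendant trace issue in your elliptic estimate. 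Second, your closing ``standard contraction argument'' needs the a priori finiteness of $\sup e^{\delta' t}h(t)$; one should either truncate to $[T_0,T]$ and pass to the limit, or iterate the integral inequality starting from boundedness of $h$ --- routine, but worth a sentence since the whole point of the proposition is that no weighted bound is known in advance.
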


\begin{proof}
Identify $\calf$ with a neighbourhood of the origin in the tangent space $B$,
and let $x$ be the path in $B$ corresponding to $(\sigma, \tau)$ in $\calf$.
Then (\ref{floweq}) transforms to a differential equation for $x$,
\begin{equation*}
\frac{dx}{dt} = Lx + Q(x) ,
\end{equation*}
where $L$ is the linearisation of (\ref{floweq}) as above,
and $Q$ is the remaining quadratic part.
Let $x = x_{+} + x_{-}$ with $x_{\pm} \in B_{\pm}$. 
If, as before, $\mu$ denotes the smallest absolute value of the eigenvalues
of $L$ then
\begin{equation*}
\lnorm{Lx_{+}} \geq \mu \lnorm{x_{+}}, \quad
\lnorm{Lx_{-}} \leq -\mu \lnorm{x_{-}} .
\end{equation*}
Applying the chain rule to the quadratic part of (\ref{floweq}) gives
\begin{equation*}
\lnorm{Q(x)} < O(\lnorm{x})\norm{x}_{\sob{1}} + O(\lnorm{x}^{2}).
\end{equation*}
By corollary \ref{uniformcor}, $x$ converges uniformly to $0$ with all
derivatives as $t \to \infty$. Therefore for any fixed $k > 0$ we can find
$t_{0}$ such that
\begin{equation*}
\lnorm{Q(x)} < k\lnorm{x}
\end{equation*}
for any $t > t_{0}$. As $\mu^{2}$ is an eigenvalue for the
Hodge Laplacian on $X$ we may fix $k$ so that $\mu - 2k > \delta$.

We thus obtain that for $t > t_{0}$
\begin{subequations}
\begin{gather}
\dddt \lnorm{x_{+}} \geq \phantom{-} \mu \lnorm{x_{+}} - k\lnorm{x}, \\
\label{xmeq}
\dddt \lnorm{x_{-}} \leq -\mu \lnorm{x_{-}} + k\lnorm{x} .
\end{gather}
\end{subequations}
In particular $\lnorm{x_{+}} - \lnorm{x_{-}}$ is an increasing function of $t$.
Because it converges to $0$ as $t \to \infty$,
\begin{equation*}
\lnorm{x_{+}} \leq \lnorm{x_{-}}
\end{equation*}
for all $t > t_{0}$. Substituting into (\ref{xmeq})
\begin{equation*}
\dddt \lnorm{x_{-}} \leq -\mu \lnorm{x_{-}} + 2k \lnorm{x_{-}} ,
\end{equation*}
so $\lnorm{x_{-}}$ is of order $e^{(-\mu+2k)t}$.
Hence so is $\lnorm{x}$, so $e^{\delta t}\chi$ is $L^{2}$-integrable on $M$.
\end{proof}

\begin{cor}
$\chi$ decays exponentially with rate $\delta$.
\end{cor}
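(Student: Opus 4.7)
The plan is to upgrade the $\sobd{0}$-integrability of $\chi$ just established to a pointwise exponential bound $|\nabla^{k}\chi(p)| \leq C_{k}e^{-\delta t(p)}$ for every $k$, via standard interior elliptic estimates on balls of fixed radius on the cylindrical end. This is the same elliptic bootstrapping already used in Corollary~\ref{uniformcor}, now combined with the exponential weight coming from the previous proposition.

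On the region $\{t\geq 1\}$ the compactly supported form $\psi$ vanishes, so \eqref{chi1eq} simplifies to $d^{*}\chi = *dF(\chi)$. Using the decomposition $*dF(\chi) = P(\chi,\nabla\chi) + Q(\chi)$ from \eqref{expandfeq} together with $d\chi = 0$, we obtain the quasilinear first-order system
\[
(d + d^{*})\chi - P(\chi,\nabla\chi) = Q(\chi),
\]
whose principal symbol differs from that of $d + d^{*}$ by terms small in $\norm{\chi}_{C^{0}}$. By Corollary~\ref{uniformcor} and by shrinking $\kappa$ if necessary, this system is uniformly elliptic on $\{t\geq 1\}$, with smooth coefficients whose $C^{k}$-norms are uniformly bounded for every $k$. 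Fix $r\in(0,1)$ and apply the standard linear interior estimate (Morrey \cite[Theorem 6.2.5]{morrey66}) to conclude, uniformly in $p$ with $t(p)\geq 2$,
\[
\norm{\chi}_{C^{k}(B(p, r/2))} \leq C_{k}\norm{\chi}_{L^{2}(B(p, r))}.
\]
Uniformity in $p$ is automatic since the asymptotic geometry of $M$ is translation-invariant and both $\chi$ and the coefficients are already controlled in $C^{k}$ uniformly on the end.

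Finally I bound the right-hand side by the weighted norm:
\[
\norm{\chi}_{L^{2}(B(p,r))}^{2}
\leq e^{-2\delta(t(p)-r)}\!\!\int_{B(p,r)}\! e^{2\delta t}|\chi|^{2}
\leq e^{-2\delta(t(p)-r)}\norm{\chi}_{\sobd{0}}^{2},
\]
so that $|\nabla^{k}\chi(p)| \leq C_{k}'\,e^{-\delta t(p)}$ on $\{t\geq 2\}$, which is precisely the claimed exponential decay of rate $\delta$ with all derivatives. There is no serious obstacle: once the weighted $L^{2}$-integrability is in hand, the passage to pointwise exponential decay is a purely local argument, identical to the one underpinning Corollary~\ref{uniformcor}. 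With this corollary established, $\chi$ is a closed, exponentially decaying form $L^{2}$-orthogonal to $\harm^{3}_{+}$, so Theorem~\ref{hodgedecompthm} provides an exponentially asymptotically translation-invariant $\eta$ with $d\eta = \chi$; the estimates \eqref{eacdetasmalleq} for $d\eta$ then follow from \eqref{chismalleq}, and Proposition~\ref{suffprop} guarantees that $\tv + d\eta$ is torsion-free, completing the proof of Theorem~\ref{eacperturbthm}.
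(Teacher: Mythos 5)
Your argument is correct and is essentially the paper's: both rest on interior elliptic estimates for $d+d^{*}$ on balls of uniform size along the end, absorb the quasilinear terms using the uniform decay of $\chi$ from Corollary~\ref{uniformcor}, and then convert the weighted $L^{2}$ bound of the preceding proposition into exponential decay. The only cosmetic difference is that the paper runs the bootstrap as an induction showing $\chi\in\sobd{k}$ for every $k$, while you pass directly from the local $L^{2}$ norm to a pointwise $C^{k}$ bound on each ball and then insert the weight; the two formulations are equivalent here.
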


\begin{proof}
We prove by induction that $\chi$ is $\sobd{k}$ for all $k \geq 0$.
Interior estimates for the elliptic operator $d + d^{*}$ on $M$ imply that
we can fix some $r > 0$ and cover the cylindrical part of $M$ with open balls
$U = B(x,r)$ such that
\begin{equation*}
\norm{\chi}_{\sob{k+1}(U)} <
C_{1} \left (\norm{d\chi}_{\sob{k}(U)} + \norm{d^{*}\chi}_{\sob{k}(U)} \right) +
C_{2}\norm{\chi}_{L^{2}(U)} .
\end{equation*}
The constants $C_{1}$ and $C_{2}$ depend on the local properties of the metric
and the volume of $U$. 
Since $M$ is EAC we can take the constants to be independent of $U$.
Recall that on the cylinder $d\chi = 0$ and $d^{*}\chi = *dF(\chi)$.
In view of the chain rule expression (\ref{expandfeq}) there is a constant
$C_{3} > 0$ such that
\begin{equation*}
\norm{dF(\chi)}_{\sob{k}(U)} < C_{3} \norm{\chi}_{C^{k}(U)}
\left(\norm{\nabla\chi}_{\sob{k}(U)} + \norm{\chi}_{\sob{k}(U)}\right) .
\end{equation*}
As $\chi$ decays uniformly we can ensure that
$\norm{\chi}_{C^{k}(U)} < 1/2C_{1}C_{3}$ by taking $U$ to be sufficiently far
along the cylindrical end. Then
\begin{equation*}
\norm{\chi}_{\sob{k+1}(U)} < \norm{\chi}_{\sob{k}(U)} +
2C_{2} \norm{\chi}_{L^{2}(U)} .
\end{equation*}
Hence $\chi$ is $\sobd{k}$ for all $k \geq 0$.
\end{proof}

This completes the proof of Theorem \ref{eacperturbthm}.

\section{Constructing an EAC $G_2$-manifold}
\label{exsec}

We shall obtain examples of torsion-free EAC \gtstr s by modifying
one of the compact 7-manifolds $M$ with holonomy $G_2$ constructed by Joyce
\cite{joyce00}. Our EAC \gtmfd s will arise in pairs via a decomposition of a
compact $M$ into two compact manifolds identified along their common boundary,
a 6-dimensional submanifold $X\subset M$,
\begin{subequations}\label{decomp}
\begin{equation}
M=M_{0,+}\cup_X M_{0,-}.
\end{equation}
A collar neighbourhood of the boundary of each $M_{0,\pm}$ is
diffeomorphic to $I\times X$, for an interval $I\subset\RE$. Define
\begin{equation}
M_{\pm}=M_{0,\pm}\cup_X (\bbrp\times X).
\end{equation}
\end{subequations}
It is on the manifolds $M_{\pm}$ with cylindrical ends that we shall construct
EAC \gtstr s satisfying the hypotheses of Theorem \ref{eacperturbthm}, such
that the resulting EAC \gtmfd s have holonomy~$G_{2}$. (Of course, $M_\pm$ is
homeomorphic to the interior of $M_{0,\pm}$.)

\subsection{Joyce's example of a compact irreducible $G_2$-manifold}
\label{simplesub}

In order to give examples of $M_{\pm}$ as above, we need to recall part
of the construction of a relatively uncomplicated example of a compact
$G_2$-manifold in~\cite[\S 12.2]{joyce00}. Consider the action on a torus
$T^{7}$ by the group $\Gamma \cong \bbz^{3}_{2}$ generated by
\begin{equation}
\label{simplegammaeq}
\begin{aligned}
\alpha &: (x_{1}, \ldots, x_{7}) \mapsto
(\phantom{-}x_{1}, \phantom{-}x_{2}, \phantom{-}x_{3}, -x_{4}, 
\phantom{\half}{-}x_{5}, \phantom{\half}{-}x_{6}, \phantom{\half}{-}x_{7}) , \\
\beta &: (x_{1}, \ldots, x_{7}) \mapsto
(\phantom{-}x_{1}, -x_{2}, -x_{3}, \phantom{-} x_{4},
\phantom{\half{-}} x_{5}, \half {-} x_{6}, \phantom{\half}{-}x_{7}) , \\
\gamma &: (x_{1}, \ldots, x_{7}) \mapsto
(-x_{1}, \phantom{-}x_{2}, -x_{3}, \phantom{-} x_{4},
\half{-}x_{5}, \phantom{\half{-}} x_{6}, \half{-}x_{7}) .
\end{aligned}
\end{equation}
These maps preserve the standard flat \gtstr{} on $T^{7}$
(cf. (\ref{g2formeq})), so $T^{7}/\Gamma$ is a flat compact
$G_{2}$-orbifold. It is simply-connected. 

The fixed point set of each of $\alpha, \beta$ and $\gamma$ consists of $16$
copies of $T^{3}$ and these are all disjoint. $\alpha\beta$, $\beta\gamma$,
$\gamma\alpha$ and $\alpha\beta\gamma$ act freely on $T^{7}$.
Furthermore $\gen{\beta, \gamma}$ acts freely on the set of $16$ $3$-tori
fixed by $\alpha$, so they map to $4$ copies of $T^{3}$ in the singular locus
of $T^{7}/\Gamma$. Similarly $\gen{\alpha,\gamma}$ and $\gen{\alpha,\beta}$
acts freely on the $16$ $3$-tori fixed by $\beta$ and $\gamma$, respectively.
Thus the singular locus of $T^{7}/\Gamma$ consists of $12$ 
disjoint copies of~$T^{3}$.

A neighbourhood of each component $T^{3}$ of the singular locus of
$T^{7}/\Gamma$ is diffeomorphic to $T^3\times\bbc^{2}/\{\pm 1\}$.
The blow-up of $\bbc^{2}/\{\pm 1\}$ at the origin resolves the singularity
giving a complex surface $Y$ biholomorphic to $T^{*}\CP^{1}$, with the
exceptional divisor corresponding to the zero section $\CP^{1}$. \label{EH}
The canonical bundle of~$Y$ is trivial and $Y$ has a family of
asymptotically locally Euclidean (ALE) Ricci-flat Kähler (hyper-Kähler)
metrics with holonomy $SU(2)$. These metrics may be defined via their Kähler
forms $i\p\bar\p f_s$, in the complex structure on~$Y$ induced by from
$T^{*}\CP^{1}$, where
\begin{equation}\label{EHmetr}
f_s=\sqrt{r^4+s^4}+2s^2\log s-s^2\log(\sqrt{r^4+s^4}+s^2),
\qquad
r^2=z_1\bar{z}_1+z_2\bar{z}_2,
\end{equation}
and $z_1,z_2$ are coordinates on $\CX^2$ and $s>0$ is a scale parameter.
The forms $i\p\bar\p f_s$ admit a smooth extension over the exceptional
divisor. Metrics induced by $f_s$ in~\eqref{EHmetr} are the well-known
Eguchi--Hanson metrics~\cite{EH},\cite[Ch.~7]{joyce00}.

It is known (and easy to check) that for each $\lambda>0$ the map $Y \to Y$
induced by $(z_1,z_2)\mapsto \lambda(z_1,z_2)$ pulls back $i\p\bar\p f_s$ to
$i\lambda^2\p\bar\p f_{\lambda s}$. In particular, $s$ is proportional to the
diameter of the exceptional divisor on~$Y$. Further, an important property of
the Eguchi--Hanson metrics is that the injectivity radius is proportional to
$s$ whereas the uniform norm of the curvature is proportional to $s^{-2}$.

As discussed in \S \ref{su2sub}, the product of an $SU(2)$-manifold and a
flat $3$-manifold has a `natural' torsion-free \gtstr{}~\eqref{g2su2}.
By replacing a neighbourhood of each singular $T^{3}$ in $T^{7}/\Gamma$ by
the product of $T^{3}$ and a neighbourhood $U\subset Y$ of the exceptional
divisor in the Eguchi-Hanson space one obtains a compact smooth manifold $M$.
Now $f_s$, for each $s>0$, is asymptotic to~$r^2$ as $r\to\infty$ and
$i\p\bar\p r^2$ is the Kähler form of
the flat Euclidean metric on $\bbc^{2}/\{\pm 1\}$.
It is therefore possible to smoothly interpolate between the torsion-free
\gtstr s on $T^{3} \times U$ corresponding to the Eguchi--Hanson metrics and
the flat \gtstr\ on $T^{7}/\Gamma$ away from a neighbourhood of the singular
locus, using a cut-off function in the gluing region. In this way, 
one obtains, for each small $s>0$, a closed stable 3-form, say $\phin_s$,
on~$M$, so that the induced \gtstr{} is torsion-free, except in the gluing
region. Altogether, according to~\cite[\S 11.5]{joyce00} the torsion $\phin_s$
is `small' in the sense that $d^{*}\phin_s=d^{*}\psi_s$ for some 3-forms
$\psi_s$ satisfying
\begin{equation}\label{small}
\norm{\psi_s}_{L^{2}} < \lambda' s^{4},\quad
\norm{\psi_s}_{C^{0}} < \lambda' s^{1/2},\quad
\norm{d^{*}\psi_s}_{L^{14}} < \lambda',
\end{equation}
for some constant $\lambda'$ independent of~$s$ (cf.~\eqref{psiesteq}).
By~\cite[Theorem~11.6.1]{joyce00} (cf. Remark \ref{cptperturbrmk}),
there is a constant $\kappa_M>0$, so that the 
\gtstr\ $\phin_s$ can be perturbed into a torsion-free \gtstr{}
\begin{equation}\label{joyce.g2}
\varphi_s=\phin_s+(\text{exact form})
\end{equation}
inducing a metric $g(\varphi_s)$ with holonomy~$G_2$ on~$M$ whenever
$0<s<\kappa_M$.

We also recall from \cite[\S 12.1]{joyce00} the technique for computing the
Betti numbers of the resolution $M$. This will be needed later when we
compute Betti numbers of the EAC \gtmfd s~$M_{\pm}$.

The cohomology of $T^{7}/\Gamma$ is just the $\Gamma$-invariant part of
the cohomology of $T^{7}$, so $b^{2}(T^{7}/\Gamma) = 0$ while
$b^{3}(T^{7}/\Gamma) = 7$.
For each of the $12$ copies of $T^{3}$ in the
singular locus we cut out a tubular neighbourhood, which deformation retracts
to~$T^{3}$, and glue in a piece of $T^{3} \times Y$, which
deformation retracts to $T^{3} \times \CP^{1}$.
Each of the operations increases the Betti numbers of $M$ by the difference
between the Betti numbers of $T^{3} \times Y$ and $T^{3}$.
This is justified using the long exact sequences for the cohomology of
$T^{7}/\Gamma$ relative to its singular locus and $M$ relative to the resolving
neighbourhoods.
Hence
\begin{equation*}
\begin{aligned}
b^{2}(M) &= 12 \cdot 1 = 12, \\
b^{3}(M) &= 7 + 12 \cdot 3 = 43 .
\end{aligned}
\end{equation*}

\subsection{An EAC \gtmfd}
\label{EACex}

We can let the group $\Gamma$ defined above act on $\bbr \times T^{6}$
instead of $T^{7}$, taking $x_{1}$ to be the coordinate on the $\bbr$-factor.
Then $(\bbr \times T^{6})/\Gamma$ is a flat $G_{2}$-orbifold with a single
end. We want to resolve it to an EAC \gtmfd.

The fixed point set of each of $\alpha$ and $\beta$ in $\bbr \times T^{6}$
consists of $16$ copies of $\bbr \times T^{2}$ and the fixed point set of
$\gamma$ consists of 8 copies of $T^3$. Resolving the singularities of
$(\bbr \times T^{6})/\Gamma$ arising from $\alpha,\beta$ by gluing in copies of
$\bbr \times T^{2} \times Y$ (along with resolving the $T^3$ singularities
arising from $\gamma$ as before) yields a 
smooth manifold $M_{+}$ with a single end (the cross-section $X$ of $M_+$
is a resolution of $T^6/\Gamma'$, where $\Gamma' \subset \Gamma$ is the
subgroup generated by $\alpha$ and~$\beta$).
However, the \gtstr{} defined by
naively adapting the method of the last subsection would introduce torsion in
a non-compact region, making it difficult to perturb to a torsion-free \gtstr.
To apply Theorem \ref{eacperturbthm} we need to
ensure that the \gtstr{} is exactly cylindrical and torsion-free on the
cylindrical end, so there may only be torsion in a compact region.
We shall get round this problem by performing the resolution in two steps,
and prove the following.

\begin{thm}
The manifold $M_+$ with cylindrical end and cross-section~$X$, as defined
in the beginning of this subsection, has an EAC metric with holonomy equal
to~$G_2$. The asymptotic limit metric on $X$ has holonomy equal to~$SU(3)$.
\label{1st.EAC-G2}
\end{thm}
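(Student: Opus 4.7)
The plan is to apply Theorem~\ref{eacperturbthm} followed by Corollary~\ref{irred}. This requires constructing, for each small $s>0$, a closed stable 3-form $\tv_s$ on $M_+$ that is exactly cylindrical and torsion-free on the end $M_\infty$, and whose torsion $d^{*}\tv_s=d^{*}\psi_s$ satisfies the bounds~\eqref{psiesteq} for some $\lambda,\mu,\nu$ independent of~$s$. To achieve this we resolve $(\bbr\times T^6)/\Gamma$ in two stages, so that all torsion ends up supported in a compact subset of $M_+$.

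In the first stage we apply the six-dimensional analogue of the construction in \S\ref{simplesub} to the subgroup $\Gamma'=\langle\alpha,\beta\rangle$ acting on $T^6$: the singular locus of $T^6/\Gamma'$ consists of $T^2$-families of $\bbc^2/\{\pm 1\}$ points, resolved by gluing in copies of $T^2\times Y$ with the Eguchi--Hanson hyper-K\"ahler structure of scale~$s$ from~\eqref{EHmetr}. Joyce's existence result for torsion-free $SU(3)$-structures on such resolutions~\cite[Ch.~6]{joyce00} perturbs the resulting closed stable pair to a Calabi--Yau structure $(\Omega,\omega)$ on the smooth compact 6-manifold $X$, and \eqref{g2su3} then gives an exactly cylindrical, exactly torsion-free \gtstr\ $\Omega+dt\we\omega$ on $M_\infty=\bbrp\times X$. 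In the second stage we extend this over $M_{0,+}$: the $\alpha,\beta$-fixed loci pass smoothly from $M_\infty$ into $M_{0,+}$ because they are translation-invariant in $x_1$, while the $\gamma$-fixed loci are confined to $\{x_1=0\}$ and are resolved by gluing in $T^3\times Y$ pieces of the same scale $s$, exactly as in \S\ref{simplesub}. Cut-off interpolations between the Eguchi--Hanson and flat orbifold \gtstr s are introduced only in compact neighbourhoods of the $\gamma$-necks. The scaling properties of the Eguchi--Hanson metric combined with Joyce's estimates~\eqref{small} then yield $\tv_s$ satisfying~\eqref{psiesteq}, $r(\tv_s)>\mu s$ and $\norm{R(\tv_s)}_{C^0}<\nu s^{-2}$ with constants independent of~$s$, so Theorem~\ref{eacperturbthm} produces an exponentially decaying exact 3-form $d\eta_s$ making $\varphi_s=\tv_s+d\eta_s$ a torsion-free EAC \gtstr.

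For the holonomy statements, $M_+$ has a single end by construction and is simply-connected: each Eguchi--Hanson fibre $T^*\CP^1$ is simply-connected, and the ambient flat orbifolds $T^7/\Gamma$, $T^6/\Gamma'$ are simply-connected since $\Gamma$ and $\Gamma'$ are generated by elements having fixed points; van Kampen applied to $M=M_{0,+}\cup_X M_{0,-}$ with both $M$ and $X$ simply-connected then forces $\pi_1(M_{0,+})=1$, hence $\pi_1(M_+)=1$. Corollary~\ref{irred} therefore gives holonomy equal to $G_2$ for $g_{\varphi_s}$. On the cross-section, the metric induced by $(\Omega,\omega)$ has holonomy contained in $SU(3)$, and since $X$ is a simply-connected compact Calabi--Yau 3-fold the de Rham decomposition rules out any reduction to a proper subgroup (a reduction would force $X$ to split off a flat factor, contradicting $\pi_1(X)=1$), so the holonomy on $X$ equals $SU(3)$.

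The main obstacle is the construction of $\tv_s$: we need a closed stable 3-form which is simultaneously exactly cylindrical and torsion-free on the end while supporting the $s$-scaled Joyce estimates on the compact piece, with the non-compact $\alpha,\beta$-fixed loci matched consistently across the cut-off between the cylindrical and compact regions. Splitting the resolution into two stages, with the torsion-free Calabi--Yau cylinder produced first, is precisely the device that makes this possible: the cylindrical model then becomes an input for the second stage, and all residual torsion is confined to compact neighbourhoods of the $\gamma$-resolutions and cut-off regions inside~$M_{0,+}$, where Joyce's compact analysis applies essentially unchanged.
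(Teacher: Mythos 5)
Your overall architecture agrees with the paper's: resolve in two stages so that the end becomes exactly cylindrical and torsion-free, verify the hypotheses of Theorem~\ref{eacperturbthm}, and conclude irreducibility from Corollary~\ref{irred} via simple-connectedness. But there is a genuine gap at the junction between your two stages. After the first stage, the Calabi--Yau structure $(\Omega,\omega)$ on $X$ is produced by an existence theorem and is \emph{not flat} anywhere --- in particular it is not flat near the images of the $\gamma$-fixed $3$-tori sitting in $M_{0,+}$. Joyce's estimates~\eqref{small}, which you invoke to control the torsion of the second-stage gluing ``essentially unchanged,'' are derived by interpolating between the \emph{explicit flat} orbifold structure and the Eguchi--Hanson model; they do not apply when the ambient structure near the singular locus is an unknown perturbation of the flat one. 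To make your second stage work one must first deform the torsion-free structure back to the flat model in a tubular neighbourhood of the $\gamma$-fixed locus, and show that this deformation introduces torsion satisfying the $s$-dependent bounds~\eqref{psiesteq} with constants independent of $s$. This is precisely what the paper's Lemma~\ref{poincarelem} (the generalized Poincar\'e inequality on $F\times I^4$) accomplishes: writing the first-stage correction $d\eta'_s$ as $d\chi_s$ with $\norm{\chi_s}$ controlled by $\norm{d\eta'_s}$, so that cutting off $\rho\chi_s$ yields the estimates~\eqref{L2.L14} and~\eqref{drcsmalleq}. This step relies crucially on the tubular neighbourhood of the $\gamma$-fixed locus being disjoint from the region affected by the first resolution (Remark~\ref{disjointrmk}), so that the metric there, and hence the Poincar\'e constants, are independent of~$s$. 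Without this interpolation argument, hypothesis \ref{psiestitem} of Theorem~\ref{eacperturbthm} is not verified and the proof does not close.

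A secondary point: you run the first stage as a six-dimensional Calabi--Yau perturbation on $X$ itself, whereas the paper resolves $T^7/\Gamma'$ to the compact seven-manifold $M'\cong S^1\times X$, applies the quantitative compact theorem \cite[Theorem~11.6.1]{joyce00} to get the bounds~\eqref{tor} on the correction term, and then uses Lemma~\ref{chanlem} to see that the resulting torsion-free structure is a product. The seven-dimensional route is not cosmetic: the estimates~\eqref{tor} on $d\eta'_s$ are exactly the input the Poincar\'e-inequality step consumes, so if you insist on perturbing $(\Omega,\omega)$ directly on $X$ you must supply an analogous quantitative version of the Calabi--Yau existence theorem. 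Your holonomy arguments for $\pi_1(M_+)=1$ and for the cross-section having holonomy exactly $SU(3)$ are essentially fine.
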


Before giving the details of the proof of Theorem \ref{1st.EAC-G2}, let us
change perspective slightly and explain how the latter 7-manifold $M_+$ arises
in the setting~\eqref{decomp}, with $M$ the compact 7-manifold discussed in
\S\ref{simplesub}.
The image of a hypersurface $T^{6} \subset T^{7}$ defined by $x_{1} = \quart$
is a hypersurface orbifold $X_{0}$ which divides $T^{7}/\Gamma$ into two open
connected regions. In fact, $X_{0}$ is precisely $T^{6}/\Gamma'$, as $\Gamma'$
is the subgroup that acts trivially on the $x_{1}$ factor in $T^{7}$.
Each component of $(T^{7}/\Gamma)\setminus X_0$ is the interior of a compact
orbifold with boundary $X_{0}$ and we can attach product cylinders
$\bbr_{>0} \times X_{0}$ to form orbifolds with a cylindrical end.
One of these (the one containing the image of $x_{1} = 0$) corresponds
naturally to $(\bbr \times T^{6})/\Gamma$.

Now, $M_+$ is well-defined as a resolution of singularities of this $(\bbr
\times T^{6})/\Gamma$ as described above and $M_-$ is defined similarly by
starting from the other component of $T^{7}/\Gamma\setminus X_0$.

\begin{rmk}
In this particular example, the two EAC halves $M_\pm$ will be
isometric, the isometry being induced from an involution on $T^{7}/\Gamma$,
$$
(x_{1}, \ldots, x_{7}) \mapsto
(x_{1} + \half, x_{2}, x_{3}, x_{4}, x_{5}, x_{6}, x_{7}),
$$
which swaps the two components of $(T^{7}/\Gamma) \setminus X_{0}$. The
restriction to $X_{0}$ induces an anti-holomorphic isometry on its
resolution~$X$.
\label{isometry}
\end{rmk}

We now state a technical result from which Theorem~\ref{1st.EAC-G2} will
follow. 

\begin{prop}\label{intermed.phi}
Let $M$ be a smooth compact 7-manifold obtained by resolving singularities of
$T^7/\Gamma$, as defined in \S\ref{simplesub}.
There exists a constant $\kappa'>0$, such that for each $s$ with $0<s<\kappa'$,
there is a closed stable
$\tv_s\in\Omega^3(M)$ with the following properties:
\begin{enumerate}
\item \label{neckitem}
There is a Calabi--Yau structure $(\Omega, \omega)$ on a $6$-manifold $X$
and an interval $I = (-\epsilon,\epsilon)$ such
that $M$ has an open subset $N \cong X \times I$ with
\begin{equation}
\label{neckeq}
\tv_s|_{N} = \Omega + dt \wedge \omega ,
\end{equation}
and $N$ retracts to~$X$ and the complement of $N$ in $M$ has exactly two
connected components (diffeomorphic to the components of $M{\setminus}X$). 
\item There is a smooth $3$-form $\psi_s$ such that
$d^{*}\psi_s = d^{*}\tv_s$, satisfying the estimates \eqref{psiesteq},
with $\lambda > 0$ independent of $s$.
\item \label{psineckitem}
$\psi_s$ vanishes on $N$.
\item \label{exactitem}
The 3-form $\tv_s-\phin_s$ is exact, where $\phin_s$ is the
\gtstr{} on $M$ defined in~\S\ref{simplesub}.
\end{enumerate}
\end{prop}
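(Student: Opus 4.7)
My plan is to modify Joyce's approximate $G_2$-structure $\phin_s$ on a neighbourhood of the hypersurface $\{x_1=\quart\}\subset T^7/\Gamma$ so that it becomes exactly cylindrical and torsion-free there, while preserving its cohomology class on~$M$. The hypersurface descends to $T^6/\Gamma'$ with $\Gamma'=\gen{\alpha,\beta}$, and it separates $T^7/\Gamma$ into two congruent halves. Crucially, the fixed $T^3$'s of~$\gamma$ lie at $x_1\in\{0,\half\}$ and are disjoint from this slice, whereas the fixed $T^3$'s of $\alpha$ and $\beta$ have $x_1$ as one of their three intrinsic coordinates. At each such $T^3$ the resolution model $T^3\times Y$ is manifestly translation-invariant in~$x_1$, because Joyce's cut-off between the Eguchi--Hanson and flat K\"ahler forms depends only on the normal $(z_1,z_2)$ coordinates of~$Y$. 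Hence $\phin_s$ is itself exactly translation-invariant in~$x_1$ on an open neighbourhood $\widetilde N$ of the slice; let $X$ denote the corresponding resolution of $T^6/\Gamma'$ by copies of $T^2\times Y$ at its $16$ singular $T^2$'s.

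Writing $\phin_s|_{\widetilde N}=\Omega_s+dt\we\omega_s$ with $t=x_1-\quart$, I obtain a closed pair on~$X$ defining an almost Calabi--Yau structure whose torsion, living only in the $\alpha,\beta$ gluing annuli, satisfies the Calabi--Yau analogues of~(\ref{small}) with the same exponents in~$s$. A compact Calabi--Yau version of the perturbation argument of~\cite[Theorem~11.6.1]{joyce00}---essentially Yau's theorem applied on the resolution of $T^6/\Gamma'$---then produces a genuine torsion-free Calabi--Yau structure $(\Omega,\omega)$ on~$X$ with $\Omega-\Omega_s$ and $\omega-\omega_s$ smooth, exact on~$X$, and pointwise small at the same rate.

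Pick nested cylindrical neighbourhoods $N\subset N'\subset\widetilde N$ of the form $X\times I$, a primitive $\xi$ on $\widetilde N$ of the exact 3-form $(\Omega+dt\we\omega)-\phin_s|_{\widetilde N}$, and a cut-off $\rho$ equal to~$1$ on~$N$ and $0$ outside~$N'$. Setting $\tv_s=\phin_s+d(\rho\xi)$ globally gives $\tv_s|_N=\Omega+dt\we\omega$, $\tv_s$ closed, $\tv_s-\phin_s$ globally exact, and stability for small~$s$ since the correction is uniformly small; this delivers items~\ref{neckitem} and~\ref{exactitem}. For the torsion form, let $\eta=\psi^{\mathrm{J}}_s+d(\rho\xi)$, where $\psi^{\mathrm{J}}_s$ denotes the form from Joyce's construction in~\S\ref{simplesub} (so $d^*\psi^{\mathrm{J}}_s=d^*\phin_s$); then $d^*\eta=d^*\tv_s$ and, by a direct computation exploiting the torsion-freeness of $\Omega+dt\we\omega$, $\eta|_N$ is coclosed. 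Choosing a coclosed extension $\tilde\eta$ of $\eta|_N$ supported in $\widetilde N$ and setting $\psi_s=\eta-\tilde\eta$ yields $\psi_s|_N=0$ and $d^*\psi_s=d^*\tv_s$. The main obstacle is propagating the three $s$-dependent bounds of~(\ref{psiesteq}) through these operations; since all correction terms are supported on the fixed compact region~$\widetilde N$ and their pointwise and $L^2$ sizes are controlled by the Calabi--Yau perturbation~$\xi$, whose scaling in~$s$ matches~(\ref{small}) via standard weighted estimates for the Monge--Amp\`ere equation on the resolution of $T^6/\Gamma'$, the exponents $s^4,\,s^{1/2},\,1$ should carry through.
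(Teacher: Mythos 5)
Your starting observation is correct and attractive: since the $\gamma$-fixed tori sit at $x_1\in\{0,\half\}$ while $x_1$ is an intrinsic coordinate on the $\alpha$- and $\beta$-fixed tori, $\phin_s$ is indeed translation-invariant in $x_1$ on a neighbourhood $\widetilde N$ of the slice $\{x_1=\quart\}$, and restricts there to $\Omega_s+dt\we\omega_s$ for an almost \cystr{} on the resolution $X$ of $T^6/\Gamma'$. This is a genuinely different organisation from the paper, which never works directly on $\widetilde N$: it first resolves $T^7/\Gamma'$ to get a compact $M'\cong S^1\times X$, perturbs to a torsion-free product structure there using the compact $G_2$ theorem (plus Lemma~\ref{chanlem} to see it is a product, avoiding any separate Calabi--Yau version of the analysis), and then interpolates back to the flat structure only near the fixed locus $F$ of $\Psi=\gen{\gamma}$ before resolving the second time. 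The reason for that detour is exactly where your argument has its main gap: your cut-off region $\widetilde N\setminus N$ runs straight through the Eguchi--Hanson necks of the $\alpha$- and $\beta$-resolutions, so its geometry degenerates as $s\to 0$. The error term $d\rho\we\xi$ is controlled by the primitive $\xi$ of $(\Omega+dt\we\omega)-\phin_s$, not by its derivative, and to get $\norm{d\rho\we\xi}_{L^2}=O(s^4)$, $\norm{d\rho\we\xi}_{C^0}=O(s^{1/2})$ you need a Poincar\'e-type inequality on $X$ (or on $\widetilde N$) with constants \emph{independent of $s$}. That is not supplied by ``standard weighted estimates for the Monge--Amp\`ere equation''; it is the crux. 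The paper arranges for all such primitives to be taken on a fixed flat tubular neighbourhood $S\cong F\times I^4$ disjoint from every resolution region, precisely so that Lemma~\ref{poincarelem} applies with $s$-independent constants (see Remark~\ref{disjointrmk}).

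Two further steps are asserted rather than proved. First, $d^*$ and $\Theta$ depend on the \gtstr{}: the required condition is $d(*_{\tv_s}\psi_s)=d\Theta(\tv_s)$, whereas Joyce's form satisfies $d(*_{\phin_s}\psi^{\mathrm{J}}_s)=d\Theta(\phin_s)$, so setting $\eta=\psi^{\mathrm{J}}_s+d(\rho\xi)$ does not give $d^*\eta=d^*\tv_s$ ``by direct computation''; the nonlinear discrepancy $\Theta(\tv_s)-\Theta(\phin_s)-(\text{linear part})$ must itself be exhibited as $d$ of a form with the estimates \eqref{psiesteq}, which is what the paper's auxiliary form $\chi'_s$ accomplishes. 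Second, the ``coclosed extension $\tilde\eta$ of $\eta|_N$ supported in $\widetilde N$'' is a boundary-value problem for an underdetermined first-order system and needs both a construction and norm bounds; the paper instead writes $\psi'_s$ by an explicit formula whose every term is manifestly supported in $S$ and controlled by \eqref{tor} and \eqref{drcsmalleq}. On the positive side, your verification of clause \ref{exactitem} ($\tv_s-\phin_s=d(\rho\xi)$) is immediate and cleaner than the paper's, which has to trace the exact form through the two-stage resolution. The proposal is thus a plausible alternative outline, but clauses (ii)--(iii), and in particular the $s$-uniformity of the constants, are not established.
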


We can think of $\tv_s$ as an `intermediate' perturbation of $\phin_s$.
Instead of perturbing away all the torsion in one go, like
in~\S\ref{simplesub}, 
we settle for eliminating the torsion from the neck region~$N$, while keeping
it controlled elsewhere. What we gain is that $\tv_s$ is a product \gtstr{}
on $N$. We can therefore cut $M$ into two halves along the hypersurface
$X \times \{0\} \subset N$, and attach a copy of $X \times [0, \infty)$ to each
half to form cylindrical-end manifolds $M_{\gi}$ with EAC \gtstr s
$\tv_{s,\gi}=\tv_s|_{M_{\gi}}$.
The properties (i)-(iii) achieved in Proposition~\ref{intermed.phi}
ensure that Theorem \ref{eacperturbthm} then applies to each of $M_{\gi}$,
giving $0<\kappa \leq \kappa'$ such that $\tv_{s,\gi}$ can be
perturbed to torsion-free \gtstr s
$$
\varphi_{s,\gi}=\tv_{s,\gi}+d\eta_{s,\gi},
$$
whenever $0<s<\kappa$.

The orbifold $(\bbr \times T^{6})/\Gamma$ is simply-connected, and so is the
resolution $M_+$. Therefore, any torsion-free \gtstr\ on $M_+$ induces a
metric with full holonomy $G_2$ by Corollary~\ref{irred}, which proves
Theorem \ref{1st.EAC-G2} assuming Proposition~\ref{intermed.phi}.

\begin{rmk}
\label{cutrmk}
This construction of the EAC \gtstr s with small torsion is only superficially
different from the description given before the statement of Theorem
\ref{1st.EAC-G2}. That is, the choice of whether we cut the manifold in half
and attach cylinders before or after resolving the singularities of the neck
is not particularly important.
The convenience of going with the latter choice in the proof is that it allows
us to do most of the technical work on compact manifolds. Another advantage is
that then it is better illuminated that we obtain a pair
of torsion-free EAC \gtmfd s whose asymptotic models are isomorphic.
One can apply to this pair of \gtstr s the gluing theorem from 
\cite[\S 5]{kovalev03} and obtain a \gtstr{} on the generalized connected sum
of $M_\pm$ joined at their ends, giving a compact \gtmfd{} with a long neck.
This connected sum is, of course, diffeomorphic to the compact \gtmfd{} $M$
as obtained by resolving singularities $T^{7}/\Gamma$ directly as in
\S\ref{simplesub}. Considering the $G_2$-metrics one may intuitively think of
the EAC halves $M_\pm$ being obtained by `pulling $M$ apart'. This will be
made more precise in~\S\ref{pullsec}, where the clause \ref{exactitem} of
Proposition~\ref{intermed.phi} will be important.
\end{rmk}

\subsection{Proof of Proposition \ref{intermed.phi}}
\label{four.three}
We find the desired cylindrical-neck \gtstr{} $\tv_s$ on the resolution
$M$ of $T^7/\Gamma$ by performing the resolution in two stages.
The group $\Gamma$ preserves the product decomposition $T^7 = S^1 \times T^6$,
where the $S^1$ factor corresponds to the $x_1$ coordinate.
Let $\Gamma' \subset \Gamma$ be the subgroup generated by $\alpha$ and $\beta$;
notice that $\Ga'$ acts on $T^{6}$ (and fixes the $S^1$-factor).
Define $\Psi=\Gamma/\Gamma'$. 
Here is the strategy of our proof:
{\renewcommand{\theenumi}{\textup{\arabic{enumi}}}
\renewcommand{\labelenumi}{\theenumi.}
\begin{enumerate}
\item \label{firstresitem}
Resolve the singularities of $T^7/\Gamma'$ using Eguchi-Hanson hyper-Kähler
spaces as described in \S \ref{simplesub} to form a compact manifold
$M'\cong S^1 \times X^6$ equipped with a family of $\Psi$-invariant
\gtstr{}s $\tv'_s$ with small torsion.
Perturb $\tv'_s$ to a torsion-free \mbox{$\Psi$-invariant} product \gtstr{}
$\varphi'_s$ on~$M'$.

\item The \gtstr\ $\varphi'_s$ is not flat near the fixed point set $F$
of $\Psi$ acting on~$M'$. We perturb $\varphi'_s$ by adding an exact 3-form
supported near~$F$, so that the resulting \gtstr\  on $M'$ interpolates
between the flat structure near $F$ and $\varphi'_s$ away from $F$. The
torsion introduced by the latter perturbation 3-form is controlled by estimates
similar to~\eqref{psiesteq}. 
Furthermore, the interpolating \gtstr\ is $\Psi$-invariant and descends
to the orbifold~$M'/\Psi$ (see Figure~\ref{thepic}).

\item Resolve the singularities of $M'/\Psi$, using the same Eguchi-Hanson
hyper-Kähler structures as in the construction of $\phin_s$ in
\S \ref{simplesub} (in particular, they have \emph{the same scale
parameter}~$s$ as in step~\ref{firstresitem}) and construct the \gtstr{}
$\tv_s$ on the compact manifold~$M$. Finally, check that the difference
$\tv_s - \phin_s$ is essentially the exact form added in step 2.
\end{enumerate}}

Our first step is entirely analogous to the construction of $\phin_s$ outlined
in~\S\ref{simplesub}, but this time we resolve the singularities of the
orbifold $(S^{1} \times T^{6})/\Gamma'$ rather than $T^7/\Gamma$.
This gives a compact 7-manifold $M'$ with a family of closed $S^{1}$-invariant
3-forms, say $\tv'_s$, inducing \gtstr s with small torsion in the sense
of~\eqref{small}.
Then, as noted in Remark \ref{cptperturbrmk}, we can apply
\cite[Theorem~11.6.1]{joyce00} and obtain a $\kappa' > 0$, such
that $\tv'_s$ admits a perturbation to a torsion-free \gtstr{}
\begin{equation}\label{perturb}
\varphi'_s=\tv'_s+d\eta'_s,
\end{equation}
for $0 < s < \kappa'$.
The correction term satisfies
\begin{equation}\label{tor}
\norm{d\eta_s}_{L^{2}} < K' s^{4}, \;\:
\norm{d\eta_s}_{C^{0}} < K' s^{1/2}, \;\:
\norm{\nabla d\eta_s}_{L^{14}} < K',
\end{equation}
with some constant $K'$ independent of~$s$ (cf.~\eqref{eacdetasmalleq}).

Clearly, there is a diffeomorphism
$$
M' \simeq S^{1} \times X,
$$
where $X$ denotes a blow-up of the complex orbifold $T^6/\Gamma'$.
Since $\tv'_s$ is $S^1$-invariant, so is~$\varphi'_s$; in fact, more is true.
The lemma below can be thought of as a simple version of the Cheeger--Gromoll
line splitting theorem (cf. \cite{cheeger71})
and ensures that $\varphi'_s$ is a product \gtstr{} determined by some
\cystr\ on~$X$ and some diffeomorphism $M' \cong S^1 \times X$ (but not
necessarily the same one as for $\tv'_s$).

\begin{lem}[{cf. Chan \cite[p. 15]{chan06}}]
\label{chanlem}
Let $T^{m}$ be a torus and $X$ a compact manifold with \mbox{$b^{1}(X) = 0$.}
If $g$ is a Ricci-flat metric on $T^{n} \times X$ that is invariant under
translations of the torus factor then there is a function
$f : X \to \bbr^{n}$ such that the graph diffeomorphism
\[ T^{n} \times X \to T^{n} \times X, \;\: (t,x) \mapsto (t + f(x), x) \]
pulls $g$ back to a product metric.
\end{lem}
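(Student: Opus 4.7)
The plan is to exploit the $T^n$-action to produce $n$ commuting Killing vector fields $V_1, \ldots, V_n$ on $T^n \times X$ (the infinitesimal generators of the torus translations), show that they are parallel with respect to the Levi--Civita connection of $g$, and then use the resulting parallel ``vertical'' and ``horizontal'' distributions together with $b^1(X) = 0$ to construct the required graph diffeomorphism.

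First I would invoke the Bochner identity: for every Killing field $V$ on a compact Riemannian manifold one has $\int \lvert \nabla V \rvert^2 = \int \mathrm{Ric}(V, V)$. Since $g$ is Ricci-flat and $T^n \times X$ is compact, this forces each $V_i$ to be parallel. Let $H$ denote the pointwise orthogonal complement of the distribution spanned by $V_1,\ldots,V_n$; it is also parallel. Define 1-forms $\eta^1, \ldots, \eta^n$ by $\eta^i(V_j) = \delta^i_j$ and $\eta^i|_H = 0$. Parallelism of the $V_i$ and of $H$ makes each $\eta^i$ a parallel 1-form, and in particular closed.

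Next I would work in coordinates $(t, x)$ on $T^n \times X$ in which $V_i = \partial/\partial t_i$. The $T^n$-invariance of $g$ forces all components of $g$ in these coordinates to be independent of $t$, and pairing $\eta^i$ against $V_j$ yields $\eta^i = dt_i + A^i$, where $A^i$ is a 1-form on $X$ lifted to $T^n \times X$. Since $\eta^i$ and $dt_i$ are both closed, $A^i$ is a closed 1-form on $X$; the hypothesis $b^1(X) = 0$ then yields $A^i = -df^i$ (sign chosen for convenience) for some $f^i \in C^\infty(X)$, and I put $f = (f^1, \ldots, f^n) : X \to \bbr^n$. A direct computation then shows that the graph diffeomorphism $\Phi : (t, x) \mapsto (t + f(x), x)$ satisfies $\Phi^* g = \sum_{ij} a_{ij}\, dt_i\, dt_j + g_X$, where the constants $a_{ij} = g(V_i, V_j)$ and the metric $g_X$ depend only on $x$: the choice of $f^i$ is exactly what is needed to cancel the cross-terms $g(\partial/\partial t_i,\partial/\partial x^\alpha)$, while the $xx$-component of $\Phi^* g$ is $t$-independent by $T^n$-invariance.

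The main (and essentially only) conceptual step is the passage from closed to exact for the 1-forms $A^i$, which is where $b^1(X) = 0$ enters; without it one would obtain at best the local de~Rham decomposition but not a single-valued $f$ defined globally on $X$. Granted this hypothesis, the rest of the argument is a routine combination of the parallel-distribution calculus with the $T^n$-invariance of the metric coefficients.
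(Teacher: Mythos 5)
Your proposal is correct and follows essentially the same route as the paper's sketch proof: the torus coordinate fields are Killing, hence parallel by Bochner's identity on the compact Ricci-flat manifold, and $b^{1}(X)=0$ converts the closed $X$-components of the associated $1$-forms into differentials $-df^{i}$, which is exactly what the graph map needs to kill the cross-terms. The only (harmless) difference is that you work with the coframe $\eta^{i}$ normalised by $\eta^{i}(V_{j})=\delta^{i}_{j}$ rather than with the metric duals $V_{i}^{\flat}$ used in the paper, which saves inverting the constant Gram matrix $g(V_{i},V_{j})$ at the last step.
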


\begin{proof}[Sketch proof.]
Let $\contrax{1}, \ldots, \contrax{n}$ be the coordinate vector fields on
$T^{n}$ and $\alpha_{i} = \bigl(\contrax{i}\bigr)^{\flat}$.
Each $\contrax{i}$ is a Killing vector field on a Ricci-flat manifold,
so the 1-forms $\alpha_{i}$ are harmonic. 
Since $b^{1}(X) = 0$ the closed forms $\alpha_{i}$ are exact.
Define $f : X \to \bbr^{n}$ by choosing $f_{i}$ such that
$\alpha_{i} = -df_{i}$.
\end{proof}

The following commutative diagram shows the relation between
$M'\simeq S^{1}\times X$ and $M$ in the resolution of singularities
and will be useful for keeping track of the construction of the desired
$\tv_s$ on~$M$ from \gtstr s $\tv'_s$ and $\varphi'_s$ on~$M'$.
\begin{equation}\label{res}
\begin{array}[c]{c}
{\xymatrix{
&M \ar[d] \ar@/^2pc/@{-->}[dd]\\
M' \ar[d] \ar[r]^{[\Psi]} &M'/\Psi \ar[d] \\
S^1\times(T^6/\Ga') \ar[r]^-{[\Psi]} &T^7/\Ga}}
\end{array}
\end{equation}
Here we used $[\Psi]$ to denote the quotient maps for the actions of
$\Psi=\Gamma/\Gamma'\cong\ZE_2$. The vertical arrows are the resolution maps
(essentially blow-ups) locally modelled on
$T^{3}\times U\to T^3\times(\CX^2/\pm 1)$, with $U$ a neighbourhood of the
exceptional divisor in an Eguchi-Hanson space. Note that there is a unique way
to lift the action of $\Psi$ to $M'$, so that the diagram~\eqref{res}
commutes. (One can further `fill in' the top left corner of~\eqref{res}, the
respective manifold being essentially the blow-up of the fixed point set of
$\Psi$ in $M'$, but we won't need that.)

The singular locus of $M'/\Psi$ consists of $4$ copies of $T^{3}$,
corresponding to the fixed point set of~$\gamma$, cf.~\S\ref{simplesub}.
We can choose the resolutions in constructing $\tv'_s$ so that it becomes
$\Psi$-invariant, moreover, so that away from a neighbourhood $S$ of the fixed
point set of $\Psi$, $\tv'_s$ is the pull-back of $\phin_s$ via
$M'{\setminus}S \to M$. 
Then $\varphi'_s$ is $\Psi$-invariant too, so both $\tv'_s$ and $\varphi'_s$
descend to well-defined \gtstr s on the quotient~$M'/\Psi$.
A neighbourhood of each $T^3$ component of the singular locus is homeomorphic
to $T^{3}\times(\CX/\{\pm 1\})$.
However, a consequence of our previous step is that the \gtstr\
$\varphi'_s$ on $M'/\Psi$ is not necessarily flat near the singular locus.
Therefore, we cannot immediately use Joyce's method discussed
in~\S\ref{simplesub}, resolving the singularities of $M'/\Psi$ by patching
$\varphi'_s$ with the product \gtstr\ on $T^{3} \times U$, in a way that keeps
the torsion small.

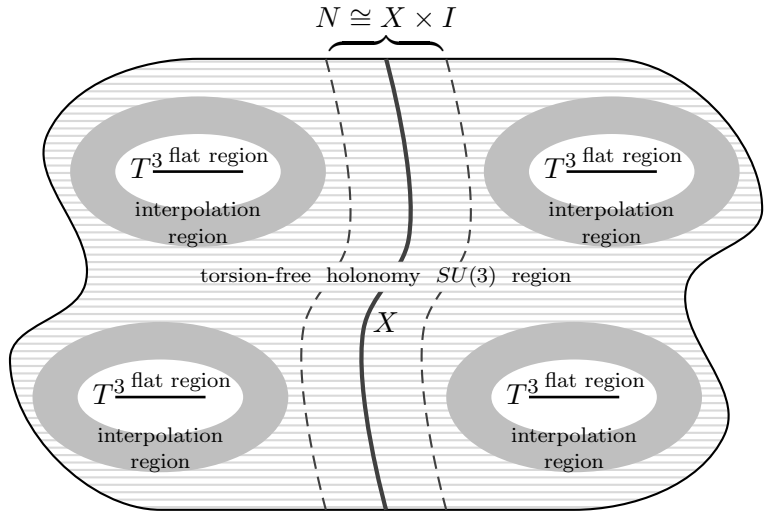
\begin{figure}[b]

\psset{unit=1mm}
\begin{pspicture}(140,65)
\psset{origin={70,30}}
\psset{linecolor=black}
\newgray{hatchgray}{0.85}
\newgray{vlightgray}{0.95}
\newgray{intergray}{0.75}
\psset{hatchcolor=hatchgray,fillcolor=vlightgray,hatchsep=2pt}
\pscustom[fillstyle=hlines,hatchangle=0]{
\psecurve(10,10)(30,30)(50,10)(40,0)(45,-20)(25,-30)(5,-20)
\psline(-30,-30)
\psecurve(-10,-10)(-30,-30)(-50,-10)(-40,0)(-45,20)(-25,30)(-5,20)
\psline(30,30)
}

\psclip{\pspolygon[linestyle=none]
(30,3)(-30,3)(-30,-1)(30,-1)(30,-100)(-30,0)(30,100)}
\pscurve[linecolor=darkgray,linewidth=1.6pt](0,30)(3,6)(-3,-6)(0,-30)
\multips(-8,0)(16,0){2}{
\pscurve[linestyle=dashed,linecolor=darkgray](0,30)(3,6)(-3,-6)(0,-30)}
\endpsclip

\multips(-30,-15)(5,30){2}
{\multips(0,0)(55,0){2}
{\psellipse*[linecolor=intergray](17,10)
\psellipse*[linecolor=white](11,5)
\psline[linewidth=1pt](-6,0)(6,0)}}

\rput(70,30){
\rput(0,-5){$X$}
\multirput(-30,-15)(5,30){2}{
\multirput(0,0)(55,0){2}{
\rput(2.75, 2) {\scriptsize flat region}
\rput(-6.75,0.75){$T^3$}
\rput(0,-7)
{\scriptsize\begin{tabular}{c}interpolation \\ region \end{tabular}}
}}
\rput(0,34){$\overbrace{\hspace{1.5cm}}^{\textstyle N \cong X \times I}$}
\rput(0,1){\scriptsize torsion-free \ holonomy \ $SU(3)$ \ region}}
\end{pspicture}

\caption{An interpolating \gtstr\ $\tv'_s + d(\eta' -\rho\chi)$
on the orbifold $M'/\Psi$.}
\label{thepic}
\end{figure}

On the other hand, the \gtstr{} $\tv'_s$ on $M'$ \emph{is} flat except near
the resolved singularities. In particular, $\tv'_s$ is flat near the fixed
point set $F \subset M'$ of $\Psi$, since the elements of
$\Gamma$ have disjoint fixed point sets.
We now wish to define on~$M'$, for $0 < s < \kappa'$, a closed
$\Psi$-invariant \gtstr{} with small torsion, by smoothly interpolating
between the flat $\tv_s'$ near~$F$ and the torsion-free $\varphi'_s$ in a
$\Psi$-invariant region $N' = \bigl( (\quart{-}\eps,\quart{+}\eps) \cup
(\tquart{-}\eps,\tquart{+}\eps)\bigr)\times X\subset (\RE/\ZE)\times X\simeq M'$,
for some $0 < \epsilon < \quart$. Note that although $N'$ has two components,
its image in the resolution $M$ of $M'/\Psi$ is connected and will be the
cylindrical neck region $N$ in the statement of Proposition
\ref{intermed.phi}. See Figure~\ref{thepic}.
To achieve small torsion, we use a generalization of the classical Poincaré
inequality.

\begin{lem}
\label{poincarelem}
Let $F$ be a compact Riemannian manifold and $I$ a bounded open interval.
For any $n \geq 0$, $k \geq 0$ and $p \geq 1$ there is a constant
$C_{n,p,k} > 0$, such that for every exact $\sobp{k}$ $m$-form $d\eta$ on the
Riemannian product $S = F \times I^{n}$ there is an $(m-1)$-form
$\chi$ with $d\chi = d\eta$ and
\begin{equation}
\label{poinesteq}
\norm{\chi}_{\sobp{k+1}} < C_{n,p,k} \norm{d\eta}_{\sobp{k}} .
\end{equation}
\end{lem}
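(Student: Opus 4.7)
The plan is to prove the estimate by induction on $n$. The base case $n=0$ is handled by standard Hodge theory on the closed manifold $F$, and the inductive step by a Poincar\'e chain homotopy along one interval factor combined with a fibrewise average to avoid any loss of regularity.

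For $n=0$, let $G$ denote the Green's operator of the Hodge Laplacian on the closed manifold $F$. Since $d\eta$ is exact it is $L^{2}$-orthogonal to the harmonic forms, and the Hodge decomposition yields $d\eta = dd^{*}G(d\eta) + d^{*}dG(d\eta)$. Pairing the last summand with $dG(d\eta)$ and integrating by parts, together with $d(d\eta)=0$, forces $d^{*}dG(d\eta)=0$; hence $\chi := d^{*}G(d\eta)$ satisfies $d\chi = d\eta$, and $L^{p}$ elliptic regularity for $\Delta$ on $F$ gives the required bound $\norm{\chi}_{\sobp{k+1}} \le C \norm{d\eta}_{\sobp{k}}$.

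For $n \ge 1$, write $S = F' \times I$ with $F' = F \times I^{n-1}$, let $t \in I = (a,b)$ denote the coordinate on the last factor, and split any $m$-form as $\omega = \alpha + dt \we \beta$ with $\alpha,\beta$ not involving $dt$. The standard Poincar\'e homotopy $K^{t_{0}}\omega(\cdot,t) := \int_{t_{0}}^{t}\beta(\cdot,s)\,ds$ satisfies the chain-homotopy identity $\omega = dK^{t_{0}}\omega + K^{t_{0}}d\omega + r^{*}i_{t_{0}}^{*}\omega$, where $r:S\to F'$ is the projection and $i_{t_{0}}:F'\hookrightarrow S$ is the inclusion at $t=t_{0}$. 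Averaging in $t_{0}\in I$ produces an operator $\tilde K$ with $\omega = d\tilde K\omega + r^{*}\bar\omega$ whenever $d\omega=0$, where $\bar\omega := |I|^{-1}\int_{I} i_{t}^{*}\omega\,dt$. Applied to $\omega = d\eta$, the average $\bar\omega = d\bigl(|I|^{-1}\int_{I} i_{t}^{*}\eta\,dt\bigr)$ is itself exact on~$F'$, so the inductive hypothesis supplies $\chi'$ on $F'$ with $d\chi' = \bar\omega$ and $\norm{\chi'}_{\sobp{k+1}(F')} \le C\norm{\bar\omega}_{\sobp{k}(F')}$. Then $\chi := \tilde K\omega + r^{*}\chi'$ satisfies $d\chi = \omega$.

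The main technical point, and the reason for averaging in $t_{0}$, is the control $\norm{\bar\omega}_{\sobp{k}(F')} \le C\norm{\omega}_{\sobp{k}(S)}$, which follows immediately from Fubini and Minkowski without any loss of regularity; the pointwise restriction $i_{t_{0}}^{*}\omega$ would only lie in $\sobp{k-1/p}(F')$ by the trace theorem, which is too weak to close the induction. The bound $\norm{\tilde K\omega}_{\sobp{k+1}(S)} \le C(|I|)\norm{\omega}_{\sobp{k}(S)}$ is routine, since $F'$-derivatives commute with the $t$-integrations while $\partial_{t}$ applied to $\tilde K\omega$ is traded for $\beta$, and the boundedness of $I$ absorbs the other integrations. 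Combining these estimates and the inductive bound for $\chi'$ closes the induction and gives the desired $C_{n,p,k}$.
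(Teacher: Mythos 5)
Your overall strategy coincides with the paper's: induction on the number of interval factors, the Hodge decomposition on the closed manifold $F$ for $n=0$, and for the inductive step an explicit primitive obtained by integrating the $dt$-component along the new factor, leaving a $t$-independent exact form on the cross-section to which the inductive hypothesis applies. The base case is fine, and your averaging over $t_{0}$ is a legitimate substitute for the paper's device (the paper fixes $t_{0}$ and observes that the residual form $d(\eta-\chi_{1})$ is literally translation-invariant in $t$, so no trace is ever taken); your bound $\norm{\bar\omega}_{\sobp{k}(F')}\le C\norm{\omega}_{\sobp{k}(S)}$ is also correct.

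The gap is the assertion that $\norm{\tilde K\omega}_{\sobp{k+1}(S)}\le C\norm{\omega}_{\sobp{k}(S)}$ is routine: it is false for the operator as you have defined it. Integration in $t$ gains a derivative only in the $t$-direction; a pure $F'$-derivative of order $k+1$ of $\tilde K\omega$ equals $|I|^{-1}\int_{I}\int_{t_{0}}^{t}\nabla_{F'}^{k+1}\beta\,ds\,dt_{0}$ and so requires $k+1$ derivatives of $\beta$, one more than is available. Concretely, with $n=1$ choose an exact $(m-1)$-form $f$ on $F$ lying in $\sobp{k}$ but not in $\sobp{k+1}$, and let $g=d^{*}Gf\in\sobp{k+1}$ be its coexact primitive, so $f=dg$; then $\omega=dt\we f=-d(dt\we g)$ is exact and $\sobp{k}$, $\bar\omega=0$, and your construction returns $\chi=\tilde K\omega=(t-\bar t)f$, which is not in $\sobp{k+1}$ even though the primitive $-dt\we g$ is. The gain of one derivative is precisely what the lemma is for --- it is used with $k=0$, $p=14$ in \eqref{L2.L14} to control $\norm{\nabla\chi_{s}}_{L^{14}}$ before invoking Theorem~\ref{g1thm} --- so this step cannot be waved through. (In fairness, the paper's own proof is no more complete here: it only establishes $\norm{\chi_{1}}_{\sobp{k}}\le V\norm{d\eta}_{\sobp{k}}$ for the analogous un-averaged primitive and then asserts \eqref{poinesteq}.) To repair the step one must choose a better primitive of the $dt\we\beta$ part, for instance by first decomposing $\beta(\cdot,t)$ on the cross-section so that everything except a harmonic --- hence smooth in the $F'$-directions --- piece is absorbed into $d$ of a form already controlled in $\sobp{k+1}$, or by bypassing the induction and quoting $L^{p}$ elliptic regularity for the Hodge Laplacian with absolute boundary conditions on the compact manifold with boundary $F\times\bar{I}^{n}$.
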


\begin{proof}
The proof is by induction on $n$. The result holds for $n = 0$ by standard
Hodge theory and elliptic estimate for the Laplacian on compact~$F$.
For the inductive step, we show that if a manifold $S$ satisfies the assertion
of the lemma, then so does $S \times I$ with the product metric.

Let $t$ denote the coordinate on $I$ and $S_{t}$ denote the hypersurface
$S\times \{t\}$. We can write
\[ d\eta = \alpha + dt \wedge \beta , \]
with $\alpha$ and $\beta$ sections of the pull-back of $\Lambda^{*}T^{*}S$
to $S \times I$.
Write $\alpha(t)$, $\beta(t)$ for the corresponding forms on $S_{t}$.
Fix $t_{0} \in I$ and let
\[ \chi_{1}(t) = \int_{t_{0}}^{t} \beta(u) du . \]
Let $\nabla$ denote the covariant derivative on $S \times I$, and consider
$\chi_{1}$ as a form on $S \times I$. For any $0 \leq i \leq k$ and $t\in I$
\begin{multline*}
\norm{(\nabla^{i}\chi_{1})(t)}^{p}_{L^{p}(S_{t})} =
\int_{S} \left\Vert\int_{t_{0}}^{t} (\nabla^{i}\beta)(u) du \right\Vert ^{p}
vol_{S} \\ \leq
V^{p-1} \int_{S} \int_{t_{0}}^{t} \norm{(\nabla^{i}\beta)(u)}^{p} du \,vol_{S}
\leq
V^{p-1} \norm{\nabla^{i}\beta}^{p}_{L^{p}(S \times I)} ,
\end{multline*}
where $V$ is the length of $I$. Hence
\[ \norm{\nabla^{i}\chi_{1}}^{p}_{L^{p}(S \times I)} \leq
\int_{I} \norm{(\nabla^{i}\chi_{1})(u)}^{p}_{L^{p}(S_{u})} du
\leq V^{p} \norm{\nabla^{i}\beta}^{p}_{L^{p}(S \times I)} , \]
and
\[ \norm{\chi_{1}}_{\sobp{k}(S \times I)} \leq
V \norm{d\eta}_{\sobp{k}(S \times I)} . \]
$d(\eta - \chi_{1})$ has no $dt$-component, so the $dt$-component of
$d^{2}(\eta - \chi_{1})$ is $\contrat d(\eta - \chi_{1}) = 0$.
Hence $d(\eta - \chi_{1})$ is the pull-back to $S \times I$ of an exact
form on $S$. By the inductive hypothesis there is a form $\chi_{2}$ such that
$d\chi_{2} = d(\eta - \chi_{1})$ and $\chi = \chi_{1} + \chi_{2}$ satisfies
(\ref{poinesteq}) for some $C$ independent of~$d\eta$.
\end{proof}

Let $S \cong F \times I^4$ be a tubular neighbourhood of $F$ in $M'$. 
Applying Lemma~\ref{poincarelem} to $d\eta'_s$ in~\eqref{perturb}, we obtain
a $2$-form $\chi_s$ on $S$ such that
\[ d\chi_s = d\eta'_s|_{S} \]
and $\chi_s$ satisfies the $L^2$ estimate
\begin{subequations}\label{L2.L14}
\begin{equation}
\norm{\chi_s}_{L^2} < C_{4,2,0}\norm{d\eta'_s|_{S}}_{L^2} \le K_2s^4
\end{equation}
as well as the $\sobf{1}$ estimate
\begin{equation}
\norm{\chi_s}_{\sobf{1}} <
C_{4,14,0}\norm{d\eta'_s|_{S}}_{L^{14}} \le
C_{4,14,0}\vol(S)^{1/14}\norm{d\eta'_s|_{S}}_{C^{0}} <
K_{14} s^{1/2}
\end{equation}
\end{subequations}
with $K_2,K_{14}$ independent of~$s$. Here we also used~\eqref{tor}. We shall
also need an estimate on the uniform norm of $\chi_s$ which is obtained
from~\eqref{L2.L14} and the following version of Sobolev embedding.

\begin{thm}[{\cite[Theorem G1]{joyce00}}]
\label{g1thm}
Let $\mu$, $\nu$ and $s$ be positive constants, and suppose $M$ is a complete
Riemannian $7$-manifold, whose injectivity radius $\delta$ and
Riemannian curvature $R$ satisfy $\delta \geq \mu s$ and
$\norm{R}_{C^{0}} \leq \nu s^{-2}$. Then there exists $C > 0$ depending only
on $\mu$ and $\nu$, such that if
$\chi \in \sobf{1}(\Lambda^{3}) \cap L^{2}(\Lambda^{3})$ then
\begin{equation*}
\norm{\chi}_{C^{0}} \leq
C(s^{1/2}\norm{\nabla\chi}_{L^{14}} + s^{-7/2}\norm{\chi}_{L^{2}}) .
\end{equation*}
\end{thm}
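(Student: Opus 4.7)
The idea is a scaling argument: first rescale the metric to place oneself in a fixed ``bounded geometry'' regime with constants depending only on $\mu$ and $\nu$, then apply a standard Sobolev embedding there, and finally unwind the scaling. Concretely, set $\tilde g = s^{-2} g$. Since this is a constant conformal rescaling, the Levi--Civita connection is unchanged, while distances scale by $s^{-1}$ and sectional curvatures by $s^{2}$. Hence the injectivity radius of $(M,\tilde g)$ is at least $\mu$ and its curvature is bounded by $\nu$ in $C^{0}$, so $(M,\tilde g)$ has bounded geometry with constants depending only on $\mu$ and $\nu$, independently of $s$.

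The core analytic step is the local Sobolev estimate
\begin{equation*}
\norm{\chi}_{C^{0}(B_{\tilde g}(x,\mu/2))} \leq
C_{0}\Bigl(\norm{\tilde\nabla\chi}_{L^{14}(B_{\tilde g}(x,\mu))} + \norm{\chi}_{L^{2}(B_{\tilde g}(x,\mu))}\Bigr)
\end{equation*}
on each $\tilde g$-geodesic ball of radius $\mu$, with $C_{0}$ depending only on $\mu$ and $\nu$. To prove it I would work in normal coordinates on $B_{\tilde g}(x,\mu)$; the curvature bound gives uniform $C^{0}$- and $C^{1}$-control on the metric components, so the estimate is equivalent, up to constants depending only on $\mu,\nu$, to its Euclidean counterpart on a ball in $\bbr^{7}$. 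For the Euclidean version, I would combine Morrey's inequality $W^{1,14}(\bbr^{7})\hookrightarrow C^{0,1/2}$ with a decomposition $\chi=\bar\chi+(\chi-\bar\chi)$ where $\bar\chi$ is the average: the average is controlled by $\norm{\chi}_{L^{2}}$ via H\"older on a finite-volume ball, while $\norm{\chi-\bar\chi}_{C^{0}}$ is controlled by $\norm{\tilde\nabla\chi}_{L^{14}}$ via Morrey combined with the Poincar\'e inequality. Covering $M$ by such balls with uniformly bounded multiplicity globalises this to
\begin{equation*}
\norm{\chi}_{C^{0}(M,\tilde g)} \leq
C_{0}\bigl(\norm{\tilde\nabla\chi}_{L^{14}(M,\tilde g)} + \norm{\chi}_{L^{2}(M,\tilde g)}\bigr).
\end{equation*}

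To finish, I would convert back to the original metric using the scaling rules for a $3$-form on a $7$-manifold: $\vert\chi\vert_{\tilde g} = s^{3}\vert\chi\vert_{g}$, $\vol_{\tilde g} = s^{-7}\vol_{g}$, and $\tilde\nabla=\nabla$ because the conformal factor is constant. Hence
\begin{equation*}
\norm{\chi}_{C^{0}(\tilde g)} = s^{3}\norm{\chi}_{C^{0}(g)},\quad
\norm{\chi}_{L^{2}(\tilde g)} = s^{-1/2}\norm{\chi}_{L^{2}(g)},\quad
\norm{\tilde\nabla\chi}_{L^{14}(\tilde g)} = s^{7/2}\norm{\nabla\chi}_{L^{14}(g)}.
\end{equation*}
Substituting into the rescaled Sobolev inequality and dividing through by $s^{3}$ produces exactly the inequality in the statement, with the exponents $s^{1/2}$ and $s^{-7/2}$ as required. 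The main obstacle is the local Sobolev step: one must verify that the constant really depends only on the bounded-geometry data $\mu,\nu$ and not on higher derivatives of curvature. This is standard material on manifolds of bounded geometry (cf.~\cite[\S 2.7]{aubin}), but the dependence must be tracked carefully through the normal-coordinate comparison.
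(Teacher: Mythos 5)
First, note that the paper does not prove this statement at all: Theorem~\ref{g1thm} is imported verbatim from Joyce's book (his Theorem~G1), so there is no internal proof to compare against; Joyce's own argument is, however, essentially the rescaling argument you propose. Your scaling bookkeeping is exactly right: for a $3$-form on a $7$-manifold under $\tilde g=s^{-2}g$ one has $|\chi|_{\tilde g}=s^{3}|\chi|_{g}$, $\vol_{\tilde g}=s^{-7}\vol_{g}$, hence $\norm{\chi}_{L^{2}(\tilde g)}=s^{-1/2}\norm{\chi}_{L^{2}(g)}$ and $\norm{\nabla\chi}_{L^{14}(\tilde g)}=s^{49/14}\norm{\nabla\chi}_{L^{14}(g)}=s^{7/2}\norm{\nabla\chi}_{L^{14}(g)}$, and dividing the unit-scale inequality by $s^{3}$ gives precisely the stated exponents. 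The globalisation is even easier than you suggest: since the conclusion is a sup-norm bound, you only need every point to be the centre of a good ball, not a bounded-multiplicity cover.

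The one step that would fail as written is the claim that geodesic normal coordinates give uniform $C^{1}$-control of the metric components from a $C^{0}$ curvature bound and an injectivity radius lower bound alone. The expansion $g_{ij}=\delta_{ij}-\tfrac13 R_{ikjl}x^{k}x^{l}+\dots$ has a remainder involving \emph{derivatives} of curvature, so the Christoffel symbols in normal coordinates are not controlled by $\nu$ and $\mu$ alone --- and you do need some control of the connection to compare the covariant derivative $\nabla\chi$ with coordinate derivatives of the components of $\chi$. Two standard repairs: (a) replace normal coordinates by harmonic coordinates, where the Jost--Karcher/Anderson theory gives uniform $C^{1,\alpha}$ bounds on $g_{ij}$ from exactly your hypotheses ($|R|\leq\nu$, $\mathrm{inj}\geq\mu$); or (b) avoid the issue entirely by applying Kato's inequality $|d|\chi||\leq|\nabla\chi|$ to reduce to the scalar function $u=|\chi|$, for which the local $W^{1,14}\cap L^{2}\hookrightarrow C^{0}$ estimate needs only $C^{0}$-equivalence of $g_{ij}$ with the Euclidean metric, and that \emph{does} follow from Rauch comparison on balls of radius $\min(\mu,\nu^{-1/2})/2$. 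With either repair the argument is complete and the constant depends only on $\mu$ and $\nu$, as required.
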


We deduce that
$$
\norm{\chi_s}_{C^{0}} < C(K_2 s +  K_{14}s^{1/2}) < \widetilde{C}s^{1/2}
$$
as $s>0$ varies in a bounded interval.

Let $\rho$ be a cut-off function (not depending on $s$) which is $1$ near $F$
and $0$ outside $S$. Then
\begin{equation}
\label{drcsmalleq}
\norm{d(\rho\chi)}_{L^{2}} < K'' s^{4}, \;\:
\norm{d(\rho\chi)}_{C^{0}} < K'' s^{1/2}, \;\:
\norm{\nabla d(\rho\chi)}_{L^{14}} < K'',
\end{equation}
with $K''$ independent of $s$.
\begin{rmk}
\label{disjointrmk}
A key point in achieving the estimates \eqref{L2.L14} and \eqref{drcsmalleq}
is that a tubular neighbourhood $S\cong F \times I^4$ does not meet the region
affected by resolution of singularities in our first step. Therefore, the
metric on~$S$ and the respective constants in~\eqref{poinesteq} can be taken
to be independent of~$s$. See also Remark~\ref{generalisermk} below.
\end{rmk}
 For each $0 < s < \kappa'$,
$\tv'_s + d(\eta'_s -\rho\chi_s)$ is a closed
\gtstr{} which is flat near~$F$. It is clear from the chain rule that
it has small torsion in the sense of Theorem \ref{eacperturbthm}:
there is a form $\psi'_s$ such that $d{*}\psi'_s =
d\Theta(\tv'_s + d(\eta'_s -\rho\chi_s))$, satisfying \eqref{psiesteq}.
(Here $\Theta$~denotes the non-linear mapping $\varphi \mapsto *_\varphi
\varphi$; note that $\Theta$ depends only on the smooth structure and
orientation on~$M'$.)
However, we need to take care to choose $\psi'_s$ in such a way
that it vanishes not only on the cylindrical region $N'$, but also near~$F$.
Because $F$ has dimension $3$ any closed $4$-form on
the tubular neighbourhood $S$ is exact. By Lemma \ref{poincarelem} we can write
\begin{equation*}
(\Theta(\tv'_s + d\eta'_s) - \Theta(\tv'_s))|_{S} = d\chi'_s
\end{equation*}
for some $3$-form $\chi'_s$ on $S$, so that 
$d(\rho\chi'_s)$ satisfies estimates of the form (\ref{drcsmalleq}).
We can then take
\begin{equation*}
\psi'_s =
*(\Theta(\tv'_s + d(\eta'_s -\rho\chi_s))- \Theta(\tv'_s + d\eta'_s) + 
d(\rho\chi'_s)) .
\end{equation*}
This is supported in $S$ and vanishes near $F$ and satisfies \eqref{psiesteq}
for some $\lambda > 0$ (depending on the constants $K'$ and $K''$ from
\eqref{tor} and \eqref{drcsmalleq}, but \emph{not} on $s$).
We can ensure that all forms are $\Psi$-invariant, so
$\psi'_s$ descends to a small $3$-form, still denoted by $\psi'_s$ on the
orbifold $M'/\Psi$. As this form is supported away from the singular locus,
$\psi'_s$ is also well-defined on the resolution $M$.

For $0 < s < \kappa'$, the form $\tv'_s + d(\eta'_s -\rho\chi_s)$ descends to an
orbifold \gtstr{} on
$M'/\Psi$ with small torsion. By construction, it is a product \gtstr{}
on the image $N \cong I \times X \subset M'/\Psi$ of $N' \subset M'$.
Its orbifold singularities are modelled on quotients
of the flat \gtstr{}, so the singularities can be resolved
like in \S \ref{simplesub}
to define a closed \gtstr{} $\tv_s$ on~$M$.
We make sure that the Eguchi--Hanson spaces used in this resolution have
the same scale as those used for the resolution of the first-step singularities.
The torsion introduced by the resolution is then small, in the sense
that there is a smooth $3$-form $\psi''_s$ on $M$,
supported near the pre-image $F'$ of the singular locus, such that
$d^{*}\psi''_s = d^{*}\tv_s$ near $F'$ and $\psi''_s$ satisfies the estimates
\eqref{psiesteq}.
Thus for each $0 < s < \kappa'$, $\tv_s$ is a \gtstr{} on $M$ with small
torsion (controlled by $\psi_s = \psi'_s + \psi''_s$) and $N$ is a cylindrical
neck region, so that $\tv_s$ satisfies the claims (i)-(iii) of
Proposition~\ref{intermed.phi}.

To prove the remaining claim~\ref{exactitem} we identify the difference
between our $\tv_s$ and the \gtstr{} $\phin_s$ obtained (in \S\ref{simplesub})
by resolving all the singularities of $T^7/\Gamma$ in a single step.
By construction in the previous paragraph, $\tv_s-\phin_s$ vanishes on a
neighbourhood of the pre-image in $M$ of the singular locus of $M'/\Psi$
(see~\eqref{res}). Therefore, we may interchangeably consider
$\tv_s-\phin_s$ as a $\Psi$-invariant form on~$M'$ supported away from a
neighbourhood $S$ of the fixed point set of~$\Psi$.

Now recall that $\tv'_s$ is a $\Psi$-invariant form on~$M'$ and
the restriction of $\tv'_s$ agrees with the pull-back of~$\phin_s$ to
$M{\setminus}S$. On the other hand, the difference between the pull-back
of~$\tv_s$ to $M{\setminus}S$ and $\tv'_s|_{M'{\setminus}S}$ is
$d(\eta'_s -\rho\chi_s)$. The 2-form $\eta'_s -\rho\chi_s$ is
$\Psi$-invariant, as $\eta'_s$ and $\chi_s$ are so. As
$\eta'_s -\rho\chi_s$ is also supported away from~$S$, it is the pull-back
via $M'{\setminus}S \to M$ of a well-defined 2-form, say $\xi$, on~$M$.
We find that $\tv_s-\phin_s$ is the exact form $d\xi$.
This completes the proof of Proposition~\ref{intermed.phi}.

\section{Further examples and applications}
\label{furthersec}

We now construct a few further examples of EAC \gtmfd s with different types
of cross-sections and discuss their topology. We also give examples of
EAC coassociative submanifolds.

\subsection{Topology of the example of \S \ref{exsec}}
\label{five.one}
To study the topology of the EAC \gtmfd{} $M_{+}$ we consider it as
a resolution of $(T^{6} \times \bbr)/\Gamma$.
As noted in \S\ref{EACex}, both the orbifold and its resolution 
are simply-connected.

Recall that we chose $\Gamma'$ to be the stabiliser of the $S^{1}$ factor
corresponding to the $x_{1}$ coordinate in \eqref{simplegammaeq},
i.e. $\Gamma' = \gen{\alpha,\beta}$.
The resolution of the intermediate quotient $S^{1} \times T^{6}/\Gamma'$ is
isomorphic to $S^{1} \times X_{19}$, for a simply-connected Calabi--Yau $3$-fold
$X_{19}$. This $X_{19}$ is then the cross-section of $M_{+}$.

We find that the Betti numbers of $(T^{6} \times \bbr)/\Gamma$ are $b^{2}= 0$,
$b^{3}= 4$, $b^{4}= 3$, $b^{5}= 0$.
The singular locus in $(\bbr \times T^{6})/\Gamma$ consists of
$8$ copies of $T^{2} \times \bbr$ and $2$ copies of $T^{3}$.
Resolving the former adds $1$, $2$ and $1$ to $b^{2}$,
$b^{3}$ and $b^{4}$, respectively.
Therefore
\begin{equation*}
\begin{aligned}
b^{2}(M_{+}) &= 8 \cdot 1 + 2 \cdot 1 = 10 , \\
b^{3}(M_{+}) &= 4 + 8 \cdot 2 + 2 \cdot 3 = 26 , \\
b^{4}(M_{+}) &= 3 + 8 \cdot 1 + 2 \cdot 3 = 17 , \\
b^{5}(M_{+}) &= 2 \cdot 1 = 2 .
\end{aligned}
\end{equation*}
We can also compute the Betti numbers of the cross-section $X_{19}$, and
find that $b^{2}(X_{19}) = 19$, $b^{3}(X_{19}) = 40$. Therefore its Hodge
numbers are
\[ h^{1,1}(X_{19}) = h^{1,2}(X_{19}) = 19 . \]
\begin{rmk}
The Calabi--Yau 3-fold $X_{19}$ can be obtained in a slightly different
way. Blowing up the singularities of $T^6/\gen{\alpha}$ gives a product
of a Kummer K3 surface and an elliptic curve $\cale \cong T^2$. The map $\beta$
descends to a holomorphic involution of $\mathrm{K3}\times\cale$, still denoted
by~$\beta$. The restriction $\beta|_{\cale}$ induced by $-1$ on $\CX$ has 4
fixed points in~$\cale$ and $(\beta|_{\mathrm{K3}})^*$ multiplies the
holomorphic (2,0)-forms on the K3 surface by $-1$. The 3-fold $X_{19}$ is then
the blow-up of the orbifold $(K3 \times \cale)/\gen{\beta}$ at its singular
locus. Calabi--Yau 3-folds obtained from $\mathrm{K3}\times \cale$ and an
involution $\beta$ with the above properties were studied by Borcea
\cite{borcea97} and Voisin~\cite{voisin93} in connection with mirror symmetry,
and are sometimes called \emph{Borcea--Voisin manifolds}.
\end{rmk}

According to \cite[Propn.~3.5]{jn1}, the dimension of the moduli
space of torsion-free EAC \gtstr s on $M_{\gi}$ can be written in terms of Betti
numbers as
\begin{equation}
\label{dimmodeq}
b^{4}(M_{\gi}) + \half b^{3}(X) - b^{1}(M_{\gi}) - 1 ,
\end{equation}
so in this example we find that the moduli space has dimension $36$.

\subsection{Two more EAC \gtmfd s}
\label{more.examples}

Let us consider some variations of the example in the previous
subsection in order to get examples of different topological types.
Especially, we want to show that an EAC manifold with holonomy exactly $G_{2}$
may have a cross-section $X$ whose holonomy is a proper subgroup of $SU(3)$.
Here and below by holonomy of a cross-section we mean `holonomy at
infinity', corresponding to the Calabi--Yau structure on $X$ defined by the
asymptotic limit of \gtstr\ along the cylindrical end (cf.~\S\ref{as.cyl}).

When we let the group $\Gamma$ from \eqref{simplegammaeq} act on
$\bbr \times T^{6}$ in the previous subsection, we could have taken the
$\bbr$-factor to correspond to a coordinate on $T^{7}$ other than $x_{1}$.
In the geometric interpretation of Remark \ref{cutrmk} this means pulling
the compact \gtmfd{} $M$ apart along a hypersurface defined by $x_{i}=\const$
rather than $x_{1} = \const$.
Pulling apart $M$ in the $x_{2}$ or $x_{4}$ direction
we get essentially the same pair of 7-manifolds $M_\pm$ as for the $x_{1}$
direction in \S\ref{EACex}. We just need to use
$\gen{\gamma, \alpha}$ or $\gen{\beta, \gamma}$ as $\Gamma'$ to define
the intermediate resolution.

\label{ex2item}
If we pull apart along the $x_{3}$ direction we get a slightly
different geometry and new examples. The subgroup of $\Gamma$ acting trivially
on the $x_{3}$ factor is $\Gamma' = \gen{\alpha, \beta\gamma}$,
which only contains one non-identity element with fixed points.
The cross-section of the neck is a resolution $X_{11}$ of
$T^{6}/\Gamma'$. It is a non-singular quotient of $T^{2} \times K3$ by an
involution that acts as $-1$ on the $T^2$ factor, so the first Betti number
$b^{1}(X_{11})$ vanishes,
but the holonomy of $X_{11}$ is $\bbz_{2} \ltimes SU(2)$.
The EAC \gtmfd s $M_{\gi}$ are however simply-connected with
a single cylindrical end.
Thus, by Corollary~\ref{irred}, these are examples of irreducible EAC
\gtmfd s with locally reducible cross-section. These are not
homeomorphic to the example in \S\ref{EACex} as the cross-section $X_{19}$ of
the latter example is simply-connected, whereas $X_{11}$ is not. 
We can also compute the Betti numbers of~$M_\pm$.

In the present case, the singular locus in each half is $4$ copies of $T^{3}$
and $4$ copies of $T^{2} \times \bbr$. The Betti numbers are therefore
\begin{equation*}
\begin{aligned}
b^{2}(M_{\gi}) &= 4 \cdot 1 + 4 \cdot 1 = 8 , \\
b^{3}(M_{\gi}) &= 4 + 4 \cdot 2 + 4 \cdot 3 = 24 , \\
b^{4}(M_{\gi}) &= 3 + 4 \cdot 1 + 4 \cdot 3 = 19 , \\
b^{5}(M_{\gi}) &= 4 \cdot 1 = 4 .
\end{aligned}
\end{equation*}
The Hodge numbers of $X_{11} = (T^{2} \times K3)/\bbz_{2}$ are
\[ h^{1,1}(X_{11}) = h^{1,2}(X_{11}) = 11 . \]
By the formula \eqref{dimmodeq} the moduli space of torsion-free EAC \gtstr s on
$M_{\gi}$ has dimension~$31$.

It is also possible to pull apart $M$ in the $x_5$, $x_6$ or $x_7$ directions.
In all three cases the resulting EAC \gtmfd s have $b^1(M_{\gi}) = 1$,
so do not have full holonomy $G_2$. In \S\ref{connectsums} we shall consider
the case of $x_5$ in greater detail, and relate $M_\pm$ to quasiprojective
complex \mbox{3-folds} with holonomy $SU(3)$ and to a `connected-sum
construction' of compact irreducible 
\gtmfd s \cite{kovalev03,kovalev-lee08}. The case $x_6$ is similar, but
the case $x_7$ is qualitatively different in that the cross-section $X$ is
not $T^2 \times K3$ but a non-singular quotient of $T^6$.

In order to find an example of an EAC manifold with holonomy $G_2$ whose
cross-section at infinity is flat, we replace $\Gamma$ with the group
$\Gamma_{1}$
generated by
\begin{equation}
\label{altgammeq}
\begin{aligned}
\alpha &: (x_{1}, \ldots, x_{7}) \mapsto
(\phantom{-}x_{1}, \phantom{-}x_{2}, \phantom{\half{-}}x_{3}, -x_{4}, 
\phantom{\half}{-}x_{5}, \phantom{\half}{-}x_{6}, -x_{7}) , \\
\beta &: (x_{1}, \ldots, x_{7}) \mapsto
(\phantom{-}x_{1}, -x_{2}, \phantom{\half}{-}x_{3}, \phantom{-} x_{4},
\phantom{\half{-}} x_{5}, \half {-} x_{6}, -x_{7}) , \\
\gamma_{1} &: (x_{1}, \ldots, x_{7}) \mapsto
(-x_{1}, \phantom{-}x_{2}, \half{-}x_{3}, \phantom{-} x_{4},
\half{-}x_{5}, \phantom{\half{-}} x_{6}, -x_{7}) .
\end{aligned}
\end{equation}
The orbifold $T^{7}/\Gamma_{1}$ can be resolved in the same way as
$T^{7}/\Gamma$, and the resulting compact \gtmfd{} $M_{1}$ has the same Betti
numbers as $M$. 
Pulling $M_{1}$ apart in the $x_{7}$ direction gives an EAC manifold with
holonomy exactly $G_{2}$ whose cross-section is the non-singular quotient of
$T^{6}$ by $\Gamma' = \gen{\alpha\beta, \beta\gamma_1} \cong \bbz_{2}^{2}$.
In particular, the cross-section is flat (in this
case there is no need for any intermediate resolution in the construction
of the EAC \gtstr).
The manifold has Betti numbers
\begin{equation*}
\begin{aligned}
b^{2}(M_{+}) &=  6 \cdot 1 = 6 , \\
b^{3}(M_{+}) &= 4 + 6 \cdot 3 = 22 , \\
b^{4}(M_{+}) &= 3 + 6 \cdot 3 = 20 , \\
b^{5}(M_{+}) &= 6 \cdot 1 = 6 .
\end{aligned}
\end{equation*}
The cross-section has $b^1(T^{6}/\bbz_{2}^{2}) = 0$ (this is in any case
a necessary condition for the EAC manifolds $M_{\gi}$  to have full holonomy
$G_2$, by \cite[Propn.~5.16]{jn1} and Theorem~\ref{irredthm}) and 
\[ h^{1,1}(T^{6}/\bbz_{2}^{2}) = h^{1,2}(T^{6}/\bbz_{2}^{2})  = 3 . \]
The moduli space of torsion-free EAC \gtstr s on $M_{\gi}$ has dimension $23$.

\begin{rmk}
\label{generalisermk}
Looking carefully, the argument for pulling apart a compact \gtmfd{} obtained
by resolving $T^7/\Gamma$ (provided a method for resolving its
singularities with small torsion)
relies on two properties of the group $\Gamma$.
The first is that $\Gamma$ preserves a product decomposition
$T^7 = S^1 \times T^6$, with some elements acting as reflections on the $S^1$
factor. The other is that, in order to apply Lemma \ref{poincarelem}, the
fixed point sets of elements of the subgroup $\Gamma'$ acting trivially on the
$S^1$ factor must not intersect fixed point sets of the remaining elements
(cf. Remark~\ref{disjointrmk}).

In \cite{joyce00}, Joyce gives a number of examples of suitable groups
$\Gamma$, where such fixed point sets of elements are pair-wise disjoint.
Most of them preserve a product decomposition, so can be pulled apart
(possibly in more than one way) giving further examples of EAC
$G_2$-manifolds.

More generally, a method is proposed in~\cite[p. 304]{joyce00} for
constructing \gtstr s with small torsion on a resolution of singularities of
$S^1\times X^6/(-1,a)$, where $X^6$ is a Calabi--Yau $3$-fold and $a$ is
an anti-holomorphic involution on~$X^6$. As discussed in \S\ref{su2sub}, the
Calabi--Yau structure of $X$ is completely determined by two closed forms, the
real part $\Omega$ of a non-vanishing holomorphic $(3,0)$-form and the Kähler
form~$\omega$. Then $a^{*}\omega = -\omega$ and without loss of generality
$a^{*}\Omega = \Omega$. The product torsion-free \gtstr\
$\Omega + dt\wedge\omega$ as in~\eqref{g2su3} is well-defined on
$S^{1} \times X$ and invariant under $(-1, a)$, thus descends to a
well-defined \gtstr{} on the quotient. The singular locus of
$S^1\times X^6/(-1,a)$ is of the form $\{0, \half \} \times L$, where
$L\subset X$ is the fixed point set of~$a$, necessarily a real
$3$-dimensional submanifold of~$X$ (more precisely, $L$ is special
Lagrangian).

A resolution of singularities of $(S^{1} \times X)/(-1, a)$ should be locally
modelled on $\bbr^{3} \times Y$, where $Y$ is an Eguchi-Hanson space.
It is explained in~\cite[p. 304]{joyce00} that to get a well-defined \gtstr\
(initially with small torsion) on the resolution one would need to make a
choice of smooth family of ALE hyper-Kähler metrics on~$Y$.

Assuming such choice, one could equally well define EAC \gtstr s with small
torsion on $(\bbr \times X)/(-1, a)$, and use Theorem \ref{eacperturbthm} to
obtain EAC manifolds with holonomy $G_2$.
\end{rmk}

\subsection{EAC coassociative submanifolds}
\label{coasssub}

Let $M$ be a 7-manifold with a \gtstr{} given by a 3-form $\varphi$.
A \emph{coassociative submanifold} $C\subset M$ is a 4-dimensional submanifold
such that $\varphi|_C=0$. It is not difficult to check that then the 4-form
$*_\varphi \varphi$ never vanishes on~$C$, thus every coassociative submanifold
is necessarily orientable.

If a \gtstr{} $\varphi$ is torsion-free then $d{*_\varphi}\varphi=0$ and the
4-form $*_\varphi \varphi$ is a \emph{calibration} on~$M$ as defined by Harvey
and Lawson \cite{harvey82}. In this case, coassociative submanifolds
(considered with appropriate orientation) are precisely the submanifolds
calibrated by $*_\varphi \varphi$, in particular, every coassociative
submanifold of a \gtmfd{} is a minimal
submanifold~\cite[Theorem II.4.2]{harvey82}. Our definition of coassociative
submanifold is not the same as in {\it op.cit.} but is equivalent to it via
\cite[Propn.~IV.4.5 \&\ Theorem~IV.4.6]{harvey82}.

One way of producing examples of coassociative submanifolds is provided by the
following.

\begin{prop}[{\cite[Propn.~10.8.5]{joyce00}}]
\label{coassprop}
Let $\sigma : M \to M$ be an involution such that $\sigma^{*}\varphi =
-\varphi$. Then each connected component of the fixed point set of $\sigma$ is
either a coassociative $4$-fold or a single point.
\end{prop}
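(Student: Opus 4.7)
The plan is pointwise linear algebra at a fixed point $p$ of $\sigma$. Standard theory of smooth finite group actions (average a Riemannian metric and work in $\sigma$-equivariant exponential coordinates) shows that $\mathrm{Fix}(\sigma)$ is a disjoint union of closed smooth submanifolds, with $T_{p}(\mathrm{Fix}\,\sigma)$ equal to the $(+1)$-eigenspace $V_{+}$ of $T = d\sigma_{p}$. So the task reduces to showing $\dim V_{+}\in\{0,4\}$ at each fixed point, and that $\varphi_{p}|_{V_{+}} = 0$ when $\dim V_{+} = 4$.

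Since the Riemannian metric associated to a $G_{2}$-structure is unchanged under $\varphi\mapsto -\varphi$, the hypothesis $\sigma^{\ast}\varphi = -\varphi$ gives $\sigma^{\ast}g_{\varphi} = g_{\varphi}$, so $T$ is orthogonal and $T_{p}M = V_{+}\oplus V_{-}$ is an orthogonal direct sum into $(\pm 1)$-eigenspaces. I would then decompose $\varphi_{p}$ by bi-type $(a,b)$, $a+b=3$, counting the number of inputs in $V_{+}$ and $V_{-}$. Under $T^{\ast}$ the $(a,b)$ component picks up a sign $(-1)^{b}$, so components with $b$ odd are automatically compatible with $T^{\ast}\varphi_{p} = -\varphi_{p}$, and the condition becomes equivalent to the vanishing of exactly the $(3,0)$ and $(1,2)$ components of $\varphi_{p}$.

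The key and only nontrivial step is a rank count. A basic fact about the model form $\varphi_{0}$ of \eqref{g2formeq} (and hence about any $G_{2}$-structure) is that for every nonzero $u\in T_{p}M$ the 2-form $u\lrcorner\varphi_{p}$ has rank exactly $6$; its restriction to $u^{\perp}$ is the K\"ahler form of the $SU(3)$-structure induced on that hyperplane. Taking $u\in V_{+}\setminus\{0\}$, the vanishing of the $(3,0)$ and $(1,2)$ components forces $u\lrcorner\varphi_{p}$ to vanish on $V_{+}\times V_{+}$ and on $V_{-}\times V_{-}$, so it comes from a bilinear pairing $V_{+}\times V_{-}\to\RE$ that moreover kills $u$ in its first slot. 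The rank of the associated 2-form is therefore at most $2\min(\dim V_{+}-1,\,7-\dim V_{+})$, and setting this equal to $6$ forces $\dim V_{+} = 4$. If instead $V_{+} = 0$ the component through $p$ is an isolated point; and if $\dim V_{+} = 4$ then the vanishing of the $(3,0)$ component is precisely $\varphi_{p}|_{V_{+}} = 0$, so the tangent 4-plane is coassociative. Applying this at every point of the component yields a coassociative submanifold, completing the proof.
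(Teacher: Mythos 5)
Your argument is correct. Note, however, that the paper supplies no proof of this statement at all: it is quoted directly from Joyce's book (Propn.~10.8.5), so there is nothing internal to compare against. Your route also differs from Joyce's standard argument. He observes that $\sigma$ preserves $g_\varphi$ (since $g_{-\varphi}=g_\varphi$), so at a fixed point $T=d\sigma_p$ is an orthogonal involution with $T^*\varphi_p=-\varphi_p$; then $-T$ stabilises $\varphi_p$ and hence lies in the copy of $G_2\subset SO(7)$ it determines, and the classification of involutions in $G_2$ (the identity, or an element with eigenvalues $(+1)^3(-1)^4$ whose $(-1)$-eigenspace is a coassociative $4$-plane) immediately yields $\dim V_+\in\{0,4\}$ together with $\varphi_p|_{V_+}=0$. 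Your replacement of that classification by the bi-degree decomposition of $\varphi_p$ combined with the fact that $u\lrcorner\varphi_p$ has rank exactly $6$ is a clean, self-contained alternative whose only nontrivial input is the transitivity of $G_2$ on the unit sphere; the rank count $2\min(\dim V_+-1,\,7-\dim V_+)\geq 6$ does force $\dim V_+=4$ as you claim. Two minor points of presentation: because $\sigma$ is already an isometry of $g_\varphi$ there is no need to average a metric, and one gets for free that the fixed components are totally geodesic; and ``setting the bound equal to $6$'' should strictly read ``requiring the bound to be at least $6$'', which is the inequality that pins down $\dim V_+=4$.
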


Any $\sigma$ as in the hypothesis of Proposition~\ref{coassprop} is called an
\emph{anti-$G_2$ involution}. It is necessarily an isometry of~$M$.

Let $M^{7}$ be the compact \gtmfd{} discussed in \S \ref{simplesub}
and $\varphi$ its torsion-free \gtstr. We shall consider two examples of
anti-$G_2$ involution taken from \cite[\S 12.6]{joyce00} which extend to
well-defined anti-$G_2$ involutions of EAC \gtmfd s constructed in
\S \ref{more.examples}.

\begin{ex}
\label{coass1ex}
Define an orientation-reversing isometry of $T^{7}$
as in \cite[Example 12.6.4]{joyce00}.
\begin{equation}
\label{sigmaeq}
\sigma : (x_{1}, \ldots, x_{7}) \mapsto
(\half{-}x_{1}, x_{2}, x_{3}, x_{4}, x_{5}, \half{-}x_{6}, \half{-}x_{7}) .
\end{equation}
Then $\sigma$ commutes with the action of $\Gamma$ defined
by \eqref{simplegammaeq}
and pulls back $\varphi_{0}$ to $-\varphi_{0}$. When the singularities
of $T^{7}/\Gamma$ are resolved to form the compact \gtmfd{} $M$ one can ensure
that $\sigma$ lifts to an anti-$G_2$ involution of $(M,\varphi)$. The fixed
point set of $\sigma$ in $M$ consists of 16 isolated points and one copy of
$T^{4}$, which is a coassociative submanifold of $M$.

We can also consider $\sigma$ in (\ref{sigmaeq}) as an involution of
$T^{6} \times \bbr$. Provided that the $\bbr$ factor corresponds to the
$x_{2}$, $x_{3}$ or $x_{4}$ coordinate this again commutes with the
action of~$\Gamma$. When we pull apart $M$ in
the $x_{2}$, $x_{3}$ or $x_{4}$ direction the resulting irreducible
EAC \gtmfd s $M_{\pm}$
are resolutions of $(T^{6} \times \bbr)/\Gamma$, so $\sigma$ lifts to
an anti-$G_2$ involution of~$M_{\gi}$.
The fixed point set in each half $M_{\gi}$ consists of $8$ isolated points
and one 4-manifold $C_{\gi} \cong T^{3} \times \bbr$, which is an
asymptotically cylindrical coassociative submanifold of $M_{\gi}$ (in the
obvious coordinates for the cylindrical end of $M_{\gi}$, $C_{\gi}$ is a
product submanifold).
\end{ex}

\begin{ex}
\label{coass2ex}
Here is another orientation-reversing isometry of $T^{7}$ taken from
\cite[Example 12.6.4]{joyce00}.
\begin{equation*}
\sigma : (x_{1}, \ldots, x_{7}) \mapsto
(\half{-}x_{1}, \half{-}x_{2}, \half{-}x_{3}, x_{4}, x_{5}, x_{6}, x_{7}) .
\end{equation*}
Its fixed point set in $T^{7}/\Gamma$ consists of 16 isolated points and two
copies of $T^{4}/\{\pm1\}$. Again, $\sigma$ lifts to an anti-$G_2$ involution
of $(M,\varphi)$ and the corresponding coassociative submanifolds in $M$
are now, respectively, two copies of the usual Kummer resolution of
$T^{4}/\{\pm1\}$, diffeomorphic to a $K3$ surface.

If we pull apart $M$ in the $x_{4}$ direction then $\sigma$ again defines
anti-$G_2$ involutions of the resulting irreducible EAC \gtmfd s $M_{\pm}$.
In each half the fixed point set has two $4$-dimensional components, which
are resolutions of $(T^{3} \times \bbr)/\{\pm1\}$. These are asymptotically
cylindrical coassociative submanifolds of $M$. Topologically, they are `halves'
of a $K3$ surface: attaching two copies by identifying their boundaries $T^3$
`at infinity' via an orientation-reversing diffeomorphism one obtains a closed
4-manifold diffeomorphic to~$K3$.
\end{ex}

Compact coassociative submanifolds have a well-behaved deformation theory.
For any coassociative submanifold $C \subset M$ the normal bundle of $C$ is
isomorphic to the bundle $\Lambda^{2}_{+}T^{*}C$ of self-dual 2-forms.
McLean \cite[Theorem 4.5]{mclean98} shows that the nearby coassociative
deformations of a closed coassociative submanifold~$C$ is a smooth manifold of
dimension $b^{2}_{+}(C)$ (see also \cite[Theorem 2.5]{joyce05}).

Joyce and Salur prove an EAC analogue of McLean's result. Denote by
$H^{2}_{0}(C,\bbr) \subseteq H^{2}(C,\bbr)$ the subspace of cohomology
classes represented by compactly supported 2-forms. Equivalently,
$H^{2}_{0}(C,\bbr)$ is the image of the natural `inclusion homomorphism' of
the cohomology with with compact support
$H^{2}_{\mathrm{c}}(M,\RE)\to H^{2}(M,\RE)$.
\begin{prop}[{\cite{joyce05}}]
Let $M^{7}$ be an EAC \gtmfd{} with cross-section $X^{6}$ and
$C \subset M$ an EAC coassociative submanifold asymptotic to $\bbrp\times L$,
for a 3-dimensional submanifold $L \subset X$.
Then the space of nearby coassociative deformations of~$C$ asymptotic to
$\bbrp\times L$ is a smooth manifold of finite dimension $b^{2}_{0,+}(C)$,
which is the dimension of a maximal positive subspace for the intersection
form on $H^{2}_{0}(C,\bbr)$.
\end{prop}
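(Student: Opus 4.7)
The plan is to adapt McLean's proof \cite{mclean98} for the compact case to the EAC setting, using the weighted Sobolev theory and EAC Hodge decomposition of \S\ref{prelimsec}. Recall first that on any coassociative $C \subset M$ the normal bundle $\nu(C)$ is canonically isomorphic via $v \mapsto (v \lrcorner \varphi)|_C$ to the bundle $\Lambda^2_+ T^*C$ of self-dual $2$-forms. Nearby deformations of $C$ asymptotic to $\bbrp \times L$ can then be parametrised by sections $v$ of $\nu(C)$ that lie in a suitable weighted Sobolev space of decaying forms on $C$, via $C_v = \exp_C(v)$. The coassociative condition $\varphi|_{C_v} = 0$ becomes a smooth nonlinear equation $F(v) = 0$ with values in $3$-forms on $C$, and a computation as in \cite{mclean98} identifies the linearisation at $v = 0$ with the exterior derivative $d\alpha$, where $\alpha \in \Omega^2_+(C)$ is the self-dual form corresponding to~$v$.

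Working on the EAC $4$-manifold $C$ with weighted norms $\sobd{k}$, for $\delta > 0$ chosen so that $\delta^2$ is smaller than any positive eigenvalue of the Hodge Laplacian on the asymptotic cross-section $L$, the linearisation
$d : \sobd{k+1}(\Lambda^2_+ T^*C) \to \sobd{k}(\Lambda^3 T^*C)$
is Fredholm. A decaying self-dual $2$-form $\alpha$ with $d\alpha = 0$ is automatically coclosed, since $d^*\alpha = -{*}d{*}\alpha = -{*}d\alpha = 0$, so the kernel of the linearisation coincides with the self-dual part of the space $\harm^2_{+}$ of decaying harmonic $2$-forms on~$C$. By the EAC Hodge theorem (the analogue of Theorem~\ref{hodgedecompthm} for $C$), $\harm^2_{+}$ represents the image of the natural map $H^2_{\mathrm{c}}(C,\bbr) \to H^2(C,\bbr)$, and the Hodge star respects this identification. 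The intersection form on $H^2_0(C,\bbr)$ is then precisely the pairing induced by $\int_C \alpha \wedge \beta$ on harmonic representatives, and its maximal positive subspace is spanned by the self-dual harmonic forms, giving $\dim \ker = b^2_{0,+}(C)$.

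To invoke the implicit function theorem, one must check unobstructedness, namely that the nonlinear map $F$ actually takes values in the range of its linearisation. Following McLean, the 3-form $F(v)$ on $C$ can be shown to be closed (as a consequence of $d\varphi = 0$ on $M$), so it suffices to see that every exact decaying $3$-form on $C$ arises as $d\alpha$ for some self-dual $\alpha \in \sobd{k+1}$. This is supplied by Theorem~\ref{hodgedecompthm} applied to $C$: any exact decaying $3$-form is $d\phi$ for some coexact $\phi \in \sobd{k+1}(\Lambda^2 T^*C) \oplus \rho\harm^2_\infty$, and the anti-self-dual part of $\phi$ is closed and hence harmonic, so may be absorbed without changing $d\phi$, leaving a self-dual primitive. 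The implicit function theorem in the weighted Sobolev setting then exhibits the moduli space near $C$ as a smooth manifold locally diffeomorphic to an open neighbourhood of $0$ in $\ker d|_{\sobd{k+1}(\Lambda^2_+)}$, of dimension $b^2_{0,+}(C)$.

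The main obstacle is the careful Fredholm and Hodge analysis on the EAC $4$-manifold $C$: one needs the identification $\harm^2_{+}(C) \cong \mathrm{im}(H^2_{\mathrm{c}}(C) \to H^2(C))$ with its self-dual/anti-self-dual splitting, and one needs interior elliptic estimates uniform along the cylindrical end (in the spirit of \S\ref{exp.dec}) to guarantee that the implicit function theorem iteration preserves the decay rate and does not drift away from the asymptotic model $\bbrp \times L$. Once these ingredients are in place the remaining steps are direct transpositions of the compact argument.
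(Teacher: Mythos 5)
The paper offers no proof of this proposition: it is quoted directly from Joyce--Salur \cite{joyce05}, so the only ``paper's own proof'' is the citation. Your outline does follow the route taken there (McLean's identification $\nu(C)\cong\Lambda^2_+T^*C$, linearisation $\alpha\mapsto d\alpha$, weighted Fredholm theory, kernel equal to the self-dual decaying harmonic $2$-forms), and your computation of the kernel and its dimension $b^2_{0,+}(C)$ is essentially correct, granted the standard identification of the $L^2$ harmonic forms on a cylindrical-end manifold with $\im\bigl(H^2_{\mathrm{c}}(C)\to H^2(C)\bigr)$ compatibly with the Hodge star.

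The genuine gap is in the surjectivity/unobstructedness step. First, a local error: for a coexact $2$-form $\phi$ on an oriented Riemannian $4$-manifold, $d^*\phi=-{*}d{*}\phi=0$ gives $d\phi_+=d\phi_-$, \emph{not} $d\phi_-=0$; the anti-self-dual part of $\phi$ is not closed in general. (Your conclusion survives locally, since $d\phi=2d\phi_+$, so a coexact primitive can always be traded for a self-dual one.) The more serious problem is that Theorem~\ref{hodgedecompthm} only supplies a coexact primitive $\phi\in\sobd{k+1}(\Lambda^{2})\oplus\rho\harm^{2}_{\infty}$, and the self-dual part of a translation-invariant harmonic $2$-form $\sigma+dt\wedge\tau$ on $\RE\times L$ equals $\half(\sigma+{*_L}\tau)+\half\,dt\wedge(\tau+{*_L}\sigma)$, which is generically nonzero. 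Hence $2\phi_+$ need not lie in $\sobd{k+1}(\Lambda^{2}_{+})$, and the surjectivity of $d:\sobd{k+1}(\Lambda^{2}_{+})\to\sobd{k}[d\Lambda^{2}]$ onto \emph{decaying} exact $3$-forms does not follow from the Hodge decomposition as you claim. Controlling precisely this non-decaying boundary term --- that is, showing that the obstruction it represents vanishes for the particular forms $\varphi|_{C_v}$ arising when the asymptotic model $\bbrp\times L$ is held fixed --- is the analytic core of \cite{joyce05}, and is where that hypothesis actually enters. Without it the implicit function theorem cannot be invoked, so as written the argument does not close.
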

For $T^{3} \times \bbr$ or the half-$K3$-surface this quantity vanishes.
Indeed, $H^i_0(T^3 \times \bbr) = 0$ for all~$i$. The half-K3-surface can
be regarded as a quotient of $T^3 \times \bbr$ blown-up at some
$\bbc^2/\{\pm1\}$ singularities, so the only contribution to $H^2_0$ comes from
the exceptional $\bbc P^1$ divisors, which have negative self-intersection.
Thus the coassociative submanifolds in example \ref{coass1ex} and
\ref{coass2ex} are rigid if their `boundary $L$ at infinity' is kept fixed.

\section{Pulling apart $G_{2}$-manifolds}
\label{pullsec}

In \S\ref{exsec} and \S\ref{furthersec} we constructed pairs of asymptotically
cylindrical \gtmfd s $(M_{\gi},\varphi_{s,\pm})$. They were obtained from a
decomposition \eqref{decomp} of compact \gtmfd s $(M,\varphi_s)$ taken
from~\cite{joyce00} which are resolutions of $T^7/\Gamma$.
In this section we show how our construction of $(M_{\gi},\varphi_{s,\pm})$
can be regarded as an inverse operation to a gluing construction
in~\cite{kovalev03} that forms compact \gtmfd s from
a `matching' pair of EAC \gtmfd s. It is easy to see that joining the
manifolds $M_\gi$ at their cylindrical ends yields a manifold diffeomorphic
to~$M$, but we shall prove a stronger statement that there is a continuous
path of torsion-free \gtstr s connecting $\varphi_s$ to the glued \gtstr s.
In other words, pulling the compact \gtmfd{} $(M,\varphi_s)$ apart into EAC
halves and gluing them back together again gives a \gtstr{} that is
deformation-equivalent to the original~$\varphi_s$.

We begin by describing the gluing construction of compact \gtmfd s from
a matching pair of EAC \gtmfd s.
Let $(M_\gi,\varphi_\gi)$ be some EAC \gtmfd s with cross-sections $X_\gi$.
The restrictions of the EAC torsion-free \gtstr s $\varphi_{\pm}$
to the cylindrical ends $[0,\infty)\times X_{\pm}\subset M_{\pm}$
have the asymptotic form
$$
\varphi_{\pm}|_{[0,\infty)\times X_{\pm}}=\varphi_{\pm,cyl}+d\eta_{\pm},
$$
where each
$$
\varphi_{\pm,cyl}=\Omega_{\pm} + dt \wedge \omega_{\pm}
$$
is a product cylindrical \gtstr{} induced by a Calabi--Yau structure on~$X$
and each 2-form $\eta_{\pm}$ decays with all derivatives at an exponential
rate as $t\to\infty$
$$
\|\nabla^r\eta_{\pm}\|_{\{t\}\times X_{\pm}}<C_re^{\lambda t}.
$$
We say that $\varphi_\gi$ is a matching pair of EAC \gtstr s if there is an
orientation-reversing diffeomorphism $F:X_+\to X_-$ satisfying
\begin{equation}
\label{matcheq}
F^*(\Omega_-)=\Omega_+,\qquad
F^*(\omega_-)=-\omega_+.
\end{equation}
For each sufficiently large $L>0$, the 3-form
$$
\tv_{\pm}(L)=\varphi_{\pm} - d(\alpha(t-L)\eta_{\pm})
$$
induces a well-defined \gtstr{}. Here we used $\alpha(t)$ to denote a
smooth cut-off function, $0\le\alpha(t)\le 1$, $\alpha(t)=0$ for
$t\le 0$ and $\alpha(t)=1$ for $t\ge 1$. 
For $L>1$, denote
$M_{\pm}(L)=M_{\pm}\setminus \bigl((L+1,\infty)\times X_\pm\bigr)$.
A {\em generalized connected sum} of $M_{\pm}$ may be defined as
$$
M(L)=M_{+}(L)\cup_F M_-(L)
$$
identifying the collar neighbourhoods of the boundaries of $M(L)$ via
$(t,x)\in [L,L+1]\times X_+\to (2L+1-t,F(x))\in [L,L+1]\times X_-$.
The 3-forms $\tv_{\pm}(L)$ agree on the `gluing region'
$[L,L+1]\times X_\pm$ and together define a closed $G_2$ 3-form $\varphi(L)$
on~$M(L)$. It is not difficult to check that the co-differential of this form,
relative to the metric $g(\varphi(L))$ satisfies
$$
\|d {*_{\varphi(L)}} \varphi(L)\|_{L^p_k(M(L))}<C_{p,k}e^{\lambda L},
$$
but need not vanish as the derivatives of the cut-off function introduce
`error terms'. Thus the \gtstr\ $\varphi(L)$ has `small' torsion on~$M$,
but need not be torsion-free.

For each $L$, the $M(L)$ is diffeomorphic, as a smooth manifold, to a fixed
compact 7-manifold $M$, but the metrics
$g(\varphi(L))$ have diameter asymptotic to $2L$, as $L\to\infty$.

\begin{thm}[{\cite[\S 5]{kovalev03}}]\label{gluing}
Let a compact 7-manifold $M(L)$ and a $G_2$ 3-form
$\varphi(L)\in\Omega^3_+(M(L))$ be a generalized connected sum of a pair of
EAC \gtmfd s $(M_\pm,\varphi_\pm)$ with \gtstr s satisfying~\eqref{matcheq}.

Then there exists an $L_0>1$ and for each $L>L_0$ a 2-form $\eta_L$ on~$M$,
so that the \gtstr{} on~$M$ induced by 
$\varphi(L)+d\eta_L$ is torsion-free.
Furthermore, the form $\eta_L$ may be chosen to satisfy
$\|\eta_L\|_{L^p_k(M(L))}<C_{p,k}e^{-\delta L}$, for some positive constants
$C_{p,k},\delta$ independent of~$L$.
\end{thm}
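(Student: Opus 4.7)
The plan is to follow the compact perturbation scheme of \cite[Ch.~11]{joyce00} on each $M(L)$, carrying all estimates uniformly in $L$ as $L\to\infty$. The main task is to solve for $\eta_L$ so that $\varphi(L)+d\eta_L$ is torsion-free. Since $\varphi(L)$ is closed and $d\eta_L$ exact, closedness is automatic, and by the same reduction as in Proposition~\ref{suffprop} (taking $\eta_L$ coexact and $L^{2}$-orthogonal to harmonic $2$-forms) coclosedness becomes the nonlinear elliptic PDE
$$
\triangle_{\varphi(L)}\eta = d^{*}\psi(L) + d^{*}(f\psi(L)) + *\,dF(d\eta),
$$
where $\psi(L)$ is any smooth $3$-form with $d^{*}\psi(L) = d^{*}\varphi(L)$, $f = \tfrac{1}{3}\langle d\eta,\varphi(L)\rangle$, and $F$ is the quadratic and higher-order part at $\varphi(L)$ of $\varphi\mapsto *_{\varphi}\varphi$.

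I would next choose $\psi(L)$ supported in the gluing region $\{L\le t\le L+1\}$, where the cut-off function in the construction of $\varphi(L)$ generates all the torsion. Since $\varphi_{\pm}$ is torsion-free away from the cut-off and $\eta_{\pm}$ decays exponentially, a direct construction yields $\psi(L)$ with $\|\psi(L)\|_{L^{p}_{k}} = O(e^{\lambda L})$ and $\lambda<0$. One then runs the iteration $\eta_{0}=0$ and
$$
\triangle_{\varphi(L)}\eta_{j+1} = d^{*}\psi(L) + d^{*}(f_{j}\psi(L)) + *\,dF(d\eta_{j}),
$$
selecting at each step the unique coexact $\eta_{j+1}$ that is $L^{2}$-orthogonal to $\ker\triangle_{\varphi(L)}$, in direct analogy with Proposition~\ref{estprop}.

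The main obstacle is to obtain a bound, independent of $L$, for the inverse of $\triangle_{\varphi(L)}$ on the orthogonal complement of its kernel. The difficulty is that $M(L)$ has a neck of length $\sim 2L$, so that $\triangle_{\varphi(L)}$ on 2-forms develops small eigenvalues as $L\to\infty$, generated by translation-invariant harmonic forms on the cross-section. The matching condition~\eqref{matcheq} is the crucial structural input: it ensures that the bounded harmonic 2-forms on $M_{\pm}$ (described via Theorem~\ref{hodgedecompthm} and their translation-invariant limits on the cylinder) paste, up to an exponentially small error in $L$, to genuine harmonic 2-forms on $M(L)$. This identifies $\ker\triangle_{\varphi(L)}$ to leading order and yields a spectral gap on its orthogonal complement bounded below uniformly in~$L$. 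Since $d^{*}\psi(L)$ and all nonlinear terms arising in the iteration are exact, they are $L^{2}$-orthogonal to the harmonics, so the inversion step is valid and uniformly controlled.

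With the uniform linear estimates in hand, the nonlinear contraction scheme of \cite[Ch.~11]{joyce00} (compare Proposition~\ref{estprop}) applies with $s$ replaced by the small parameter $e^{\lambda L}$ and the injectivity-radius and curvature hypotheses trivially uniform on $M(L)$. This produces a Cauchy sequence $d\eta_{j}$ converging to $d\eta_{L}$ with $\|\eta_{L}\|_{L^{p}_{k}(M(L))} < C_{p,k}e^{-\delta L}$ for some $\delta>0$ independent of $L$ (any $\delta < -\lambda$ will do). Elliptic bootstrapping on the compact $M(L)$, as used in~\S\ref{exp.dec}, upgrades $\eta_{L}$ to smoothness, and the argument of Proposition~\ref{suffprop} confirms that $\varphi(L)+d\eta_{L}$ is torsion-free.
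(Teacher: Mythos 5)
The paper does not prove this theorem itself: it quotes it from \cite[\S 5]{kovalev03} and describes the proof there as resting on ``a lower bound for the linearization of~\eqref{suff2eq} on~$M$ with carefully chosen weighted Sobolev norms and an application of the inverse mapping theorem in Banach spaces''. Your overall architecture (reduce to the elliptic equation of Proposition~\ref{suffprop}, choose $\psi(L)$ supported in the gluing region with $\|\psi(L)\|=O(e^{\lambda L})$, $\lambda<0$, then iterate) is consistent with that, but the step on which everything hinges is wrong: there is \emph{no} spectral gap for $\triangle_{\varphi(L)}$ on the orthogonal complement of its kernel that is bounded below uniformly in~$L$. On a manifold with a neck $[-L,L]\times X$, every harmonic $2$-form $\kappa$ on the cross-section $X$ generates neck modes of the form $f(t)\kappa$ with $-f''=\lambda f$; these are coclosed, orthogonal to the harmonics, hence coexact, and they produce genuine eigenvalues of order $L^{-2}$ on exactly the subspace (coexact $2$-forms) where you need to invert the Laplacian. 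Since $b^2(X)>0$ here, these small eigenvalues are unavoidable, and orthogonality of the right-hand side to $\ker\triangle_{\varphi(L)}$ does nothing to control them. Consequently your claim that ``the inversion step is valid and uniformly controlled'' fails, and with it the uniformity in $L$ of the contraction constants.

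The repair is precisely the content of \cite[\S 5]{kovalev03} and \cite{KS}: one does not seek a uniform bound in the plain $L^2$ (or $L^p_k$) norms, but introduces $L$-dependent weighted norms on the neck with respect to which the inverse of the linearization grows only sub-exponentially in $L$ (e.g.\ at most like $e^{\epsilon L}$ for $\epsilon>0$ as small as one likes, or polynomially). Because the error term $d^*\psi(L)$ decays at the \emph{fixed} exponential rate $e^{\lambda L}$ inherited from the decay of $\eta_\pm$, the product of the inverse norm with the error still tends to zero exponentially, and the inverse function theorem (rather than a naive contraction with $L$-independent constants) closes the argument and yields $\|\eta_L\|_{L^p_k}<C_{p,k}e^{-\delta L}$ for some $0<\delta<-\lambda$. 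Your proposal identifies the correct obstacle but then asserts it away; as written, the estimate $\|d\eta_{j+1}-d\eta_j\|\le \tfrac12\|d\eta_j-d\eta_{j-1}\|$ cannot be established with constants independent of $L$, so the scheme does not converge uniformly and the stated exponential bound on $\eta_L$ does not follow.
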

The above is a variant of the `gluing theorem' for solutions of non-linear
elliptic PDEs on generalized connected sums~\cite{KS}, adapted
to~\eqref{suff2eq}. The proof uses a lower bound for the linerization
of~\eqref{suff2eq} on~$M$ with carefully chosen weighted Sobolev norms
and an application of the inverse mapping theorem in Banach spaces.
\begin{defn}
For a matching pair of torsion-free \gtstr s and $L > L_0$, let
\[ \Phi(\varphi_{+}, \varphi_{-}, L) = \varphi(L) + d\eta_L \]
be the \gtstr\ on~$M$ defined in Theorem~\ref{gluing}.
\end{defn}

The family of $G_2$-metrics induced by $\Phi(\varphi_{+},\varphi_{-},L)$
may be thought of as 
stretching the neck of a generalized connected sum, defined by the
decomposition of compact 7-manifold $M$ along a hypersurface~$X$.
The pair of EAC \gtmfd s $(M_\pm,\varphi_\pm)$ may be identified as a boundary
point of the moduli space for \gtstr s on~$M$ corresponding to the limit of
the path 
$\Phi(\varphi_{+}, \varphi_{-}, L)$, as $L\to\infty$
(see~\cite[\S 5]{jn3} for more precise details).

Now we return to consider the pairs $(M_\gi,\varphi_\gi)$ of EAC
\gtmfd s constructed in \S\ref{EACex} and~\S\ref{furthersec}.
It follows from the decomposition \eqref{decomp} that $\varphi_\gi$ is
a matching pair of EAC \gtstr s in the sense of \eqref{matcheq}.
The generalized connected sum of $M_\gi$ is clearly diffeomorphic to~$M$
in the left-hand side of~(\ref{decomp}a), so
by Theorem~\ref{gluing} we obtain a family of torsion-free \gtstr s
$\Phi(\varphi_{+}, \varphi_{-}, L)\in\Omega^3_+(M)$. On the other hand,
in this case we can construct on $M$ another path $\phi(L)$ of
torsion-free \gtstr s,
with the same asymptotic properties as $L\to\infty$, using the
\gtstr\ $\tv_s$ defined in Proposition~\ref{intermed.phi}.
Recall from~\eqref{neckeq} that $\tv_s$ restricts to a product torsion-free
\gtstr\ on $N\subset M$, which is a finite cylindrical domain $N \cong
(-\epsilon,\epsilon)\times X$. For each $L\geq 0$, using a diffeomorphism
$f_L$ between intervals in~$\RE$
\begin{equation}\label{change.t}
t\in (-\epsilon,\epsilon)\to t_L=f_L(t)\in (-\epsilon-L,\epsilon+L),
\end{equation}
we define a new \gtstr\ $\tv_s(L)$ on~$M$ so that
$\tv_s(L)|_{N} = \Omega + dt_L \wedge \omega$ and $\tv_s(L)$ coincides with
$\tv_s$ away from $N$. It is easy to see that the resulting family of metrics
$g(\tv_s(L))$ may be informally described as `stretching' the neck region
$N$ in the Riemannian manifold $(M,g(\tv_s))$. The change~\eqref{change.t} of
the cylindrical coordinate on~$N$ amounts to the diameter of $(M,g(\tv_s))$
being increased by~$2L$.

For each $L\geq 0$, the \gtstr\ $\tv_s(L)$ satisfies the same estimates on
the torsion as $\tv_s$ (this follows from the argument of~\S\ref{four.three}).
Therefore, the same method as in the case of $\tv_s$ 
applies to show that $\tv_s(L)$ can be perturbed to a torsion-free 
\gtstr{} $\phi_s(L)=\tv_s(L)+(\text{exact form})$ 
\cite[\S 11.6 and \S 12.2]{joyce00}.

There is no obvious reason for the \gtstr s $\phi(L)$ to be
isomorphic to $\Phi(\varphi_{+}, \varphi_{-}, L)$, but we
show that the two families are `asymptotic' to each other in the following
sense.

\begin{thm}
\label{pullthm}
Let $M^{7}$ be a compact manifold with a closed \gtstr{} $\tv_s$,
such that the assertions \ref{neckitem}-\ref{psineckitem} of
Proposition~\ref{intermed.phi} hold, for each sufficiently small~$s$.
Assume that $s$ is sufficiently small and define the path $\phi_s(L)$
as above.
Let $\tv_{s,\pm}$ be EAC \gtstr s on the manifolds $M_\pm$ with cylindrical
ends defined after Proposition~\ref{intermed.phi} and $\varphi_{s,\gi}$
the torsion-free perturbations of $\tv_{s,\pm}$ within their cohomology
class defined by Theorem~\ref{eacperturbthm}.

Then for every sufficiently large $L$, there are some matching
deformations $\varphi'_{s,\gi}=\varphi'_{s,\gi}(L)$ of $\varphi_{s,\gi}$,
satisfying $\|\varphi'_{s,\gi}-\varphi_{s,\gi}\| < C_1 L^{-1}$, and a real
$\eps_L$, satisfying $|\eps_L|<C_2$, with $C_1,C_2>0$ independent of~$L$,
so that the \gtstr\ $\phi_s(L)$ is isomorphic to $\Phi(\varphi'_{s,+},
\varphi'_{s,-}, L+\eps_L)$.
\end{thm}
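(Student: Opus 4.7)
The plan is to view both families of torsion-free \gtstr s as perturbations of closed stable 3-forms on a common underlying smooth manifold, matched up to a finite-dimensional cohomological ambiguity that is absorbed by adjusting $\varphi'_{s,\pm}$ and $\eps_L$. First I would fix a large $L$ and choose a bounded $\eps_L$ so that the natural identification of the stretched compact manifold carrying $\tv_s(L)$ with the generalised connected sum $M(L+\eps_L) = M_+(L+\eps_L) \cup_F M_-(L+\eps_L)$ matches their cylindrical necks. Under this diffeomorphism both $\tv_s(L)$ and the pre-perturbation glued form $\varphi(L+\eps_L)$ from Theorem~\ref{gluing} (built from the matching pair $(\varphi_{s,+},\varphi_{s,-})$) are closed stable 3-forms coinciding with the cylindrical product on the middle of the neck; their difference is exact and supported in a compact region plus two cut-off annuli, small on the compact pieces by the closeness of $\varphi_{s,\pm}$ to $\tv_{s,\pm}$ guaranteed by Theorem~\ref{eacperturbthm}, and of order $e^{-\delta L}$ on the annuli thanks to the exponential decay of $\eta_{s,\pm}$.

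Second, I would upgrade Theorem~\ref{gluing} to a parametric statement: that $(\varphi'_+,\varphi'_-,L')\mapsto [\Phi(\varphi'_+,\varphi'_-,L')]$ is a smooth map from a neighbourhood of $(\varphi_{s,+},\varphi_{s,-},L)$ in matching EAC torsion-free pairs with a neck-length parameter into the moduli space of torsion-free \gtstr s on~$M$. A Mayer--Vietoris dimension count using \eqref{dimmodeq} on both halves, the matching constraint, and the one parameter $L'$ matches $b^3(M)$; combined with the analysis of the pulling-apart boundary in~\cite[\S 5]{jn3}, this shows the parametric map is a local diffeomorphism onto a neighbourhood of the relevant boundary stratum of the moduli space. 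Since Joyce's path $\phi_s(L)$ converges to the same boundary point, its moduli class $[\phi_s(L)]$ lies in the image of the parametric map for all large $L$, and the inverse function theorem produces the required triple $(\varphi'_{s,+},\varphi'_{s,-},L+\eps_L)$; the local uniqueness of the torsion-free perturbation of a given closed stable 3-form (modulo diffeomorphism) then identifies $\phi_s(L)$ with $\Phi(\varphi'_{s,+},\varphi'_{s,-},L+\eps_L)$.

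The quantitative bounds come from two sources of discrepancy: translation-invariant harmonic content over a neck of length~$\sim 2L$ contributes an $O(1/L)$ correction to the matching of cohomology classes, yielding $\|\varphi'_{s,\pm}-\varphi_{s,\pm}\|<C_1/L$ after inverting the parametric map with an $L$-uniformly bounded inverse, while the cut-off torsion contributes the $O(e^{-\delta L})$ piece absorbed into $\eps_L$, bounded by $C_2$. The principal obstacle is establishing the parametric gluing with such $L$-uniform estimates: the Banach fixed-point argument behind Theorem~\ref{gluing} must be shown to depend smoothly on $(\varphi'_\pm,L')$ near the reference point, with a lower bound on the linearisation of~\eqref{suff2eq} that does not degenerate as $L\to\infty$. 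This requires weighted Sobolev norms whose weights depend on the neck length and on which piece of the long-neck manifold one is on, in the spirit of~\cite{KS}, and is the technical heart of the argument; once in place, matching cohomology classes and identifying the two perturbations is essentially bookkeeping.
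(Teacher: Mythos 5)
Your proposal follows essentially the same route as the paper's proof: the parametric gluing map on matching pairs times neck length (which the paper does not re-derive but imports from~\cite{jn3} as the local description~\eqref{coneeq}, together with the fact that it is a diffeomorphism onto an open affine cone in $K=[\varphi_s]+\delta(H^{2}(X))$), then matching of cohomology classes along the two affine lines of slope $2\delta([\omega])$ to produce $(\varphi'_{s,\pm},L+\eps_L)$ with the $O(1/L)$ bound coming from the dominance of the $2L\delta([\omega'])$ term when inverting~\eqref{coneeq}, and finally identification of the two torsion-free perturbations of the common closed stable form $\tv_s(L)$ via the uniqueness statement Proposition~\ref{unresprop}. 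The only points of difference are that the ``technical heart'' you flag is available as a citation rather than needing a new proof, and that the final identification step requires verifying the specific $L^{2}$, $C^{0}$ and $\sobf{1}$ smallness hypotheses of Proposition~\ref{unresprop}, which the paper does by estimating the difference of the glued structures as $O(L^{-1/2})$, $O(L^{-1})$ and $O(L^{-13/14})$ respectively while the difference from $\tv_s(L)$ is controlled in terms of~$s$ by~\eqref{eacdetasmalleq}.
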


When $(M,\varphi_s)$ is an example of compact \gtmfd{} discussed
in \S \ref{simplesub} and \S\ref{furthersec}
we shall deduce from the proof of Theorem~\ref{pullthm} a further
result which will be used in \S\ref{connectsums}.

\begin{thm}\label{pullcor}
Let $(M,\varphi_s)$ and $(M_\gi,\varphi_{s,\gi})$ be the \gtmfd s defined in
\S\ref{simplesub} and \S\ref{EACex} or in \S\ref{furthersec}. There is, for
every sufficiently small $s>0$, a continuous path of torsion-free \gtstr s on
$M$ connecting $\varphi_s$ and $\Phi(\varphi_{s,+}, \varphi_{s,-}, L)$, whenever
$L$ is sufficiently large in the sense of Theorem~\ref{gluing}.
\end{thm}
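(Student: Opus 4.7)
The plan is to build the desired path in three continuous pieces: a short path from $\varphi_s$ to $\phi_s(0)$, the family $L'\mapsto \phi_s(L')$ for $L'\in[0,L]$ (with $L$ large enough for both Theorem~\ref{pullthm} and Theorem~\ref{gluing} to apply), and a short path from $\phi_s(L)$ to $\Phi(\varphi_{s,+}, \varphi_{s,-}, L)$ produced via Theorem~\ref{pullthm} and continuity of the gluing construction.

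For the first piece, clause~\ref{exactitem} of Proposition~\ref{intermed.phi} writes $\tv_s - \phin_s = d\xi$ for a 2-form $\xi$ that is built in~\S\ref{four.three} from $\eta'_s$ and $\rho\chi_s$. Since both pieces are uniformly small for small $s$, the linear interpolation $\phi_s^\tau = \phin_s + \tau\, d\xi$, $\tau\in[0,1]$, consists of stable closed 3-forms satisfying small-torsion estimates of the form~\eqref{small} uniformly in~$\tau$. Applying \cite[Theorem~11.6.1]{joyce00} to each $\phi_s^\tau$ and invoking the uniqueness part of its contraction-mapping argument yields a continuous family of torsion-free perturbations $\phi_s^\tau + d\eta_s^\tau$ joining $\varphi_s$ at $\tau=0$ to $\phi_s(0)$ at $\tau=1$. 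The second piece is immediate once the diffeomorphism $f_{L'}$ in~\eqref{change.t} is chosen to depend continuously on $L'\geq 0$ (with $f_0=\mathrm{id}$): the 3-forms $\tv_s(L')$ then vary continuously in~$L'$ and satisfy small-torsion estimates uniform in~$L'$, so the same uniqueness gives continuity of $L'\mapsto \phi_s(L')$ on $[0,L]$.

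For the third piece, Theorem~\ref{pullthm} identifies $\phi_s(L)$ with $\Phi(\varphi'_{s,+}, \varphi'_{s,-}, L+\eps_L)$ up to a diffeomorphism of~$M$ close to the identity, which I would absorb into a continuous path using local path-connectedness of $\mathrm{Diff}(M)$ near~$1$. To reach $\Phi(\varphi_{s,+},\varphi_{s,-}, L)$ from there, first interpolate the matching pair $(\varphi'_{s,+},\varphi'_{s,-})\to(\varphi_{s,+},\varphi_{s,-})$ within the moduli space of matching EAC torsion-free \gtstr s (a smooth manifold near either pair, in which the endpoints are $O(L^{-1})$-close by Theorem~\ref{pullthm}), and then continuously vary the neck parameter from $L+\eps_L$ to~$L$. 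Both steps yield continuous families of torsion-free \gtstr s on~$M$ by Theorem~\ref{gluing}, provided $L$ is chosen large enough that the neck parameter stays above the threshold of the gluing construction along the entire path.

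The main obstacle I expect is verifying uniform continuity of the torsion-free perturbations in pieces one and two, and of the gluing in piece three. This reduces to checking that the implicit-function-type constants in the respective contraction-mapping schemes of \cite[Ch.~11]{joyce00} and \cite[\S 5]{kovalev03} are uniform along the parameter families; this uniformity is a fairly standard consequence of their proofs once the uniform estimates are in place for fixed parameter, but it is not stated explicitly in those references and needs to be tracked carefully here.
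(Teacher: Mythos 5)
Your proposal is correct and follows essentially the same route as the paper: the paper's proof traverses the same chain $\varphi_s \sim \phi_s(0) \sim \phi_s(L-\eps_L) \sim \Phi(\varphi'_{s,+}(L),\varphi'_{s,-}(L),L) \sim \Phi(\varphi_{s,+},\varphi_{s,-},L)$ (in the opposite order), using Theorem~\ref{pullthm}, the connectedness of $\ycal{R}'$ and continuity of $\Phi$ on $\ycal{R}'\times(L_1,\infty)$, and clause~\ref{exactitem} of Proposition~\ref{intermed.phi}. Your first piece is exactly the content of Proposition~\ref{unresprop}, which the paper cites rather than re-derives, and for the continuity of $L'\mapsto\phi_s(L')$ the paper sidesteps your ``uniform constants'' concern by instead using continuity of the cohomology class $[\phi_s(L')]$ together with the fact that $\pi:\defstr\to H^3(M,\RE)$ is a local diffeomorphism (Theorem~\ref{maincptthm}(i)).
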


As we shall see, a closed \gtstr{} $\tv_s$ will be required once again in the
argument of Theorem~\ref{pullcor} and the clause~\ref{exactitem} of
Proposition \ref{intermed.phi} will be important.

In order to prove Theorems \ref{pullthm} and~\ref{pullcor} we need
to recall some results concerning the moduli of torsion-free \gtstr s.

\subsection{The moduli space of torsion-free $G_{2}$-structures}

Let $M$ be a compact \gtmfd, $\calx$~the space of torsion-free \gtstr s on $M$
and $\cald$ the group of diffeomorphisms of $M$ isotopic to the identity.
The group $\cald$ acts on $\calx$, and the quotient $\defstr=\calx/\cald$
is the \emph{moduli space of torsion-free \gtstr s}.
Since torsion-free \gtstr s are represented by closed forms there is a
well-defined projection $\defstr \to H^{3}(M,\RE)$ via the de Rham cohomology.

One way to extend the definition of~$\defstr$ to an EAC \gtmfd\ $M$, with
\gtstr~$\check\varphi$ say, is to set $\calx$ to be the space of EAC
torsion-free \gtstr s 
on $M$ exponentially asymptotic to $\check\varphi$ along the cylindrical end.
The group $\cald$ is now taken to be the group of diffeomorphisms of~$M$
isotopic to the identity and on the cylindrical end exponentially
asymptotic to the identity map. Then $\defstr=\calx/\cald$ is the 
\emph{moduli space of torsion-free \gtstr s asymptotic to a fixed
cylindrical \gtstr}. It can be shown that for every $\varphi$
exponentially asymptotic to $\check\varphi$ the de Rham cohomology class
$[\varphi-\check\varphi]$ can be represented by a compactly supported closed
3-form on~$M$. (More generally, one can define a moduli space for
\gtstr s on~$M$ whose asymptotic model is allowed to vary, see~\cite{jn1}
for the details.)

\begin{thm}
\label{maincptthm}
{\rm (i)}
Let $M$ be a compact 7-manifold admitting torsion-free \gtstr s. Then the
moduli space $\defstr$ of torsion-free \gtstr s on $M$ is a smooth manifold,
and the map
$$
\pi: \varphi\cald\in\defstr \to [\varphi]\in H^{3}(M,\RE)
$$
is a local diffeomorphism.

{\rm (ii)}
Let $(M,\check\varphi)$ be an EAC \gtmfd. Then the moduli space $\defstr$ of
torsion-free \gtstr s on $M$ asymptotic to $\check\varphi$ is a smooth
manifold, and the map to affine subspace
$$
\pi: \varphi\cald\in\defstr \to [\varphi]\in [\check\varphi]+H^{3}_0(M,\RE)
\subset H^3(M,\RE)
$$
is a local diffeomorphism. Here $H^{3}_0(M,\RE)\subset H^{3}(M,\RE)$ denotes
the subspace of cohomology classes represented by compactly supported closed
3-forms. 
\end{thm}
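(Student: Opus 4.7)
The plan is to adapt Joyce's argument for the compact case \cite[Thm~10.4.4]{joyce00} and extend it to weighted Sobolev spaces for part~(ii). In both cases the map $\pi$ is well-defined because every $f \in \cald$ is isotopic to the identity, so $f^*\varphi$ and $\varphi$ represent the same de Rham class.

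For part~(i), the strategy is to construct, near a fixed $\varphi_0 \in \calx$, a smooth slice for the $\cald$-action consisting of torsion-free \gtstr s and parametrised by a neighbourhood of $0$ in the space $\harm^3$ of $g_{\varphi_0}$-harmonic 3-forms. Given small $\alpha \in \harm^3$, Hodge decomposition lets one seek the torsion-free representative in the form $\varphi(\alpha) = \varphi_0 + \alpha + d\eta$ with coclosed $\eta$; closedness is automatic, so the only remaining condition $d\,{*_\varphi}\varphi = 0$ is a single nonlinear elliptic equation for~$\eta$. Its linearisation at $\eta = 0$ acts as $d\,d^*$ on coexact 2-forms, where it is invertible between suitable Sobolev completions, so the implicit function theorem yields a smooth solution $\eta(\alpha)$ depending smoothly on $\alpha$, with elliptic regularity giving smoothness of $\eta(\alpha)$ itself. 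A short further argument using the infinitesimal $\cald$-action $\varphi \mapsto \varphi + d(X\lrcorner\varphi)$ together with the gauge condition $d^*\eta = 0$ shows that every nearby $\cald$-orbit of torsion-free structures meets this slice in exactly one point, so the slice descends to a chart on $\defstr$ under which $\pi$ becomes the identification $\harm^3 \cong H^3(M,\bbr)$.

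For part~(ii), the same scheme runs with every function space replaced by its $\sobd{k}$-weighted analogue for a small admissible $\delta > 0$, and with the Hodge decomposition replaced by Theorem~\ref{hodgedecompthm}. Harmonic representatives are drawn from $\harm^3_+$, which is naturally identified via EAC Hodge theory with $H^3_0(M,\bbr) = \im\bigl(H^3_c(M,\bbr)\to H^3(M,\bbr)\bigr)$; this pins down the affine image $[\check\varphi] + H^3_0(M,\bbr)$ as the target of $\pi$. The key analytic input is that $d\,d^*$ on coexact 2-forms is Fredholm with trivial kernel between $\sobd{k+1}$ and $\sobd{k-1}$ for generic $\delta$ \cite{lockhart87,MP}, so the implicit function theorem applies as before to yield the slice.

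The main obstacle is really the weighted-space bookkeeping in part~(ii): one must verify that all nonlinear corrections arising from expanding $*_\varphi\varphi$ about $\varphi_0$ define bounded multilinear maps, uniformly in the perturbation, between the weighted spaces. This follows from the $C^0$-smallness of $\alpha$ and $d\eta$ via the weighted Sobolev embedding noted in~\S\ref{as.cyl}, together with the fact that products of two exponentially decaying functions decay at twice the rate and hence stay within the $\delta$-weighted class. The detailed verification is essentially that carried out in~\cite[\S 3]{jn1}.
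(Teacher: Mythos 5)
The paper itself offers no proof of this theorem: it simply records that (i) is \cite[Theorem 10.4.4]{joyce00} and (ii) is \cite[Theorem 3.2 and Corollary 3.7]{jn1}, and your sketch follows essentially the route of those references (harmonic slice plus implicit function theorem, with weighted spaces and Theorem~\ref{hodgedecompthm} in the EAC case, and the identification $\harm^3_+\cong H^3_0(M,\RE)$ pinning down the affine target). One correction: the linearised operator on coexact $2$-forms is $d^*d$ --- equivalently the Hodge Laplacian, as in the reformulation of the torsion-free condition used in Proposition~\ref{suffprop} --- not $dd^*$, which vanishes identically on coexact forms and so cannot be invertible; with that fixed the outline is sound, though be aware that the step you call ``a short further argument'', namely that every nearby $\cald$-orbit of torsion-free structures meets the slice exactly once, is where the bulk of the work in \cite{joyce00} and \cite{jn1} actually lies.
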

The clause (i) is proved in \cite[Theorem $10.4.4$]{joyce00} and (ii)
in~\cite[Theorem 3.2 and Corollary 3.7]{jn1}.

The torsion-free \gtstr s discussed in this paper are obtained as a
perturbation of some closed stable 3-forms $\tv_s$ by adding a
`small' exact form. In particular, a \gtstr\ induced by $\tv_s$
necessarily has small torsion. Our next result shows that two closed stable
3-forms, which are in the same de Rham cohomology class and have small
torsion, will define the same point in $\defstr$ whenever their difference
is also small.

\begin{prop}
\label{unresprop}
Suppose that a 7-manifold~$M$ is either compact or has a cylindrical end.
For $i = 0,1$ let $\tv_{i}$ be a closed stable 3-form defining a \gtstr{}
and a metric $\tilde{g}_i=g(\tv_i)$ and Hodge star $*_i$ on~$M$.
If $M$ has a cylindrical end, suppose further that $\tv_i$ are EAC \gtstr s
and that $\tv_0-\tv_1$ decays to zero with all derivatives along the end.

Let $\psi_{i}$ be smooth $3$-forms such that
$d{*_i}\psi_i=d{*_i}\tv_i$ and
suppose that each $(\tv_{i},\psi_i)$ satisfies the
hypotheses \ref{psiestitem}--\ref{curvestitem} in Theorem~\ref{eacperturbthm},
relative to the metric~$\tilde{g}_i$.
Let $\varphi_{i}$ be the torsion-free \gtstr s
defined by Theorem~\ref{eacperturbthm} using $(\tv_{i}, \psi_{i})$.

Finally suppose that the 3-form $\tv_{0}-\tv_{1}$ is exact and
\[ \norm{\tv_{0}-\tv_{1}}_{L^{2}} < \lambda \tpar^{4}, \quad
\norm{\tv_{0}-\tv_{1}}_{C^{0}} < \lambda \tpar^{1/2}, \quad
\norm{\tv_{0}-\tv_{1}}_{\sobf{1}} < \lambda,  \]
where the norms are defined using the metric $\tilde{g}_0$.

Then for each sufficiently small $\tpar>0$, the torsion-free \gtstr s
$\varphi_i$ are isomorphic and define the same point in~$\defstr$.
\end{prop}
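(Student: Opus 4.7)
The strategy exploits Theorem \ref{maincptthm}, which asserts that the de Rham cohomology class map $\pi:\defstr\to H^3(M,\mathbb{R})$ (or to the affine subspace $[\check\varphi]+H^3_0(M,\mathbb{R})$ in the EAC case) is a local diffeomorphism, hence has discrete fibres. By hypothesis $\tv_0-\tv_1$ is exact, and by Theorem \ref{eacperturbthm} each $d\eta_i$ is exact (with exponential decay in the EAC setting), so $[\varphi_0]=[\varphi_1]$, i.e.\ $\pi([\varphi_0])=\pi([\varphi_1])$. The task reduces to showing that $[\varphi_0]$ and $[\varphi_1]$ lie in a single connected subset of a common $\pi$-fibre, for then discreteness of the fibre forces them to coincide.

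To produce such a connected set I would join $\varphi_0$ and $\varphi_1$ by a continuous path of torsion-free \gtstr s. Set $\tv_t=(1-t)\tv_0+t\tv_1$ for $t\in[0,1]$; these are closed stable 3-forms provided $\tpar$ is small, since stability is an open condition and $\tv_0-\tv_1$ is $O(\tpar^{1/2})$ in $C^0$. I would then choose, for each $t$, a smooth compactly supported $\psi_t$ with $d^{*}_{\tv_t}\psi_t=d^{*}_{\tv_t}\tv_t$ obeying estimates of the form \eqref{psiesteq}. A natural candidate is to start from the linear interpolation $(1-t)\psi_0+t\psi_1$ and correct it to account for the $t$-dependence of the codifferential; since the metrics $g(\tv_t)$ are uniformly comparable to $g(\tv_0)$ for $\tpar$ small, this correction is of the same order as the interpolation itself. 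Hypotheses (b) and (c) of Theorem \ref{eacperturbthm} along the path follow from pointwise closeness of $\tv_t$ to $\tv_0$: the map $\tv\mapsto g(\tv)$ is algebraic pointwise, and the curvature/injectivity radius bounds for $\tv_t$ degrade by at most a multiplicative constant for $\tpar$ small. Theorem \ref{eacperturbthm} then supplies a torsion-free \gtstr{} $\varphi_t=\tv_t+d\eta_t$ for every $t$.

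Continuity of $t\mapsto\varphi_t$ in a norm strong enough to descend to a continuous path in $\defstr$ follows from the contraction mapping construction in Proposition \ref{estprop}: each iterate $d\eta_j^{(t)}$ depends continuously on the input data $(\tv_t,\psi_t)$, and the uniform estimates in the iteration imply continuous dependence of the $C^0$-limit $d\eta_t$. In the EAC case, the exponential decay established in \S\ref{exp.dec} is also uniform in $t$ because the rate $\delta$ depends only on the common asymptotic cross-section. Consequently $t\mapsto[\varphi_t]\in\defstr$ is continuous. Every $\varphi_t$ lies in the fixed de Rham class $[\tv_0]$, so the path lies in a single fibre of $\pi$; discreteness of this fibre forces the path to be constant, and in particular $\varphi_0\cald=\varphi_1\cald$, which is exactly the conclusion.

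The main obstacle is the verification, uniformly along the path, of the hypotheses of Theorem \ref{eacperturbthm} — especially the construction of $\psi_t$ with the correct codifferential and the required $L^2$, $C^0$ and $L^{14}$ bounds. This is mostly a bookkeeping matter, but care is needed because the codifferential $d^{*}_{\tv_t}$ varies with $t$ while the smallness estimates on $\psi_0,\psi_1$ are taken with respect to different metrics, so the interpolation must be adjusted by a term whose size is controlled by $\|\tv_0-\tv_1\|$ in the appropriate weighted norm; this is precisely where the three quantitative hypotheses on $\tv_0-\tv_1$ are used.
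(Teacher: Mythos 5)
Your proposal is correct and follows essentially the same route as the paper: linearly interpolate $\tv_u=\tv_0+u(\tv_1-\tv_0)$, build an interpolating family $\psi'_u$ satisfying $d{*_u}\psi'_u=d{*_u}\tv_u$ (the paper writes the explicit formula $\psi'_u = \tv_{u} + *_u\bigl((1-u)*_{0}(\psi_{0} - \tv_{0})+ u*_{1}(\psi_{1} - \tv_{1})\bigr)$, which is exactly the ``corrected linear interpolation'' you describe), verify the hypotheses of Theorem~\ref{eacperturbthm} uniformly in $u$ after shrinking $s_0$, and conclude from the fixed cohomology class and Theorem~\ref{maincptthm} that the resulting continuous path of torsion-free \gtstr s is locally constant in $\defstr$.
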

Recall from Remark~\ref{cptperturbrmk} that in the case when $M$ is compact
the statement of Theorem~\ref{eacperturbthm}
recovers~\cite[Theorem~11.6.1]{joyce00}.

\begin{proof}
Let $\tv_{1}-\tv_{0}=d\eta$, $\eta\in\Omega^2(M)$ and set
$\tv_{u}=\tv_0+u\,d\eta$, for $u \in [0,1]$. If $0<s<s_0$ for a sufficiently
small $s_0>0$ independent of the choice of $\tv_j$ then $\tv_{u}$ induces a
well-defined path of \gtstr s on~$M$. Define a path of 3-forms
$$
\psi'_u = \tv_{u} + 
*_u\bigl((1-u)*_{0}\!(\psi_{0} - \tv_{0})+
u*_{1}\!(\psi_{1} - \tv_{1})\bigr),
$$
where $*_{u}$ is the Hodge star of the metric defined by~$\tv_{u}$.
Then $\psi'_0=\phi_0$ and $\psi'_1=\psi_1$ and
$d{*_u}\psi_u=d{*_u}\varphi_u$, for each $u \in [0,1]$.

By our hypothesis, $(\tv_{u}, \psi'_{u})$ satisfy for $u=0$ and $u=1$, all
the estimates required in Theorem~\ref{eacperturbthm}. The left-hand
sides of these estimates depend continuously on~$u$. Therefore, by choosing
a smaller $s_0>0$ if necessary we obtain that the estimates on
$(\tv_u,\phi'_u)$ are satisfied for every $u\in [0,1]$ and
Theorem~\ref{eacperturbthm} produces a path of torsion-free \gtstr s
$\varphi_u$, connecting the given $\varphi_i$, $i=0,1$. A standard argument
verifies that $\varphi_u$ is continuous in~$u$.

By the construction, the de Rham cohomology class of $\tv_u$ is independent
of $u\in [0,1]$. By Theorem \ref{maincptthm}, the path in the moduli space
$\defstr$ defined by $\varphi_u$ must be locally constant. It follows that 
$\varphi_0$ and $\varphi_1$ define the same point in $\defstr$
and the respective \gtstr s are isomorphic.
\end{proof}

\subsection{Deformations and gluing. Proof of Theorems \ref{pullthm}
and~\ref{pullcor}}
We require one more ingredient for proving Theorem~\ref{pullthm}.
The second author \cite{jn3} shows that any small torsion-free deformation
of $\Phi(\varphi_{+}, \varphi_{-}, L)$ is, up to an isomorphism, obtainable
by gluing some small deformations of~$\varphi_{\gi}$. More important to the
present discussion is the following local description from the proof of
that result.

There are pre-moduli spaces $\gical{R}$ of EAC torsion-free \gtstr s near
$\varphi_{\gi}$, i.e. a submanifold of the space of EAC \gtstr s which is
homeomorphic to a neighbourhood of $\varphi_{\gi}$ in the moduli space
of EAC \gtstr s on $M_{\gi}$. The subspace
$\ycal{R} \subseteq \onecal{R} \times \twocal{R}$ of matching pairs is
a submanifold. The connected-sum construction gives a well-defined map
$\Phi$ from $\ycal{R} \times (L_{1}, \infty)$ (for $L_{1} > 0$ sufficiently large)
to the moduli space $\defstr$ of torsion-free \gtstr s on $M$. It is best
studied in terms of the composition with the local diffeomorphism
$\defstr \to H^{3}(M)$,
\begin{equation*}
\Phi_H : \ycal{R} \times (L_{1}, \infty) \to H^{3}(M) .
\end{equation*}

Topologically $M = M_{+} \cup M_{-}$. Consider the Mayer-Vietoris sequence
\begin{equation}
\label{mayereq}
\cdots \longrightarrow
H^{m-1}(X) \stackrel{\delta}{\longrightarrow}
H^{m}(M) \stackrel{i_{+}^{*} \oplus i_{-}^{*}}{\longrightarrow}
H^{m}(M_{+}) \oplus H^{m}(M_{-})
\stackrel{j_{+}^{*} - j_{-}^{*}}{\longrightarrow} H^{m}(X) \longrightarrow
\cdots ,
\end{equation}
where $j_{\gi} : X \to M_{\gi}$ is the inclusion of the cross-section and
$i_{\gi} : M_{\gi} \to M$ is the inclusion in the union (these maps are
naturally defined up to isotopy).

The cohomology class of the glued \gtstr{} satisfies
$i^{*}_{\gi}\Phi_H(\varphi_{+}, \varphi_{-}, L) = [\varphi_{\gi}]$.
Also $\contral \Phi_H(\varphi_{+}, \varphi_{-}, L) = 2\delta([\omega])$, where
$\omega$ denotes the Kähler form of the Calabi--Yau structure on $X$ defined
by the common asymptotic limit of $\varphi_{\gi}$.
Thus, if we let $\ycal{R}'$ be the submanifold
\[ \ycal{R}' = \{ (\psi_{+}, \psi_{-}) \in \ycal{R} :
i^{*}_{\gi}\psi_{\gi} = i^{*}_{\gi}\varphi_{\gi} \} , \]
then the restriction of $\Phi_H$ to $\ycal{R}' \times (L_{1}, \infty)$ takes
values in the affine subspace $K = [\varphi] + \delta(H^{2}(X))$, 
and can be written as
\begin{equation}
\label{coneeq}
\Phi_H : \ycal{R}' \times (L_{1}, \infty) \to K, \;\:
(\varphi'_{+}, \varphi'_{-}, L)
\mapsto F(\varphi'_{+}, \varphi'_{-}) + 2L\delta([\omega']) ,
\end{equation}
where $\omega'$ is the Kähler form of the common boundary value of
$(\varphi'_{+},\varphi'_{-}) \in \ycal{R}$ and $F : \ycal{R}' \to K$ is smooth.
It is explained in \cite[\S 5]{jn3} that the image of
$\ycal{R}' \to \delta(H^{2}(X)),
(\varphi'_{+}, \varphi'_{-}) \mapsto \delta([\omega'])$
is a submanifold transverse to the radial direction, so that
\eqref{coneeq} is diffeomorphism onto its image, which contains
an open affine cone in $K$ (if $L_{1}$ is large enough).

\begin{proof}[Proof of Theorem \ref{pullthm}]
Recall that the torsion-free \gtstr s $\phi_s(L)$ are obtained by
perturbing the closed \gtstr s $\tv_s(L)$ with small torsion, which are
in turn defined by stretching the cylindrical neck $X \times I$ of $\tv_s$
by a length $2L$. Their cohomology classes are
$[\phi_s(L)] = [\tv_s(L)] = [\varphi_s] + 2L\delta([\omega])$, where
$\omega$ is the Kähler form on $X$, so the image of the path $\phi_s(L)$
in $H^{3}(M)$ is an affine line with slope $2\delta([\omega])$.

We also defined torsion-free EAC \gtstr s $\varphi_{s,\gi}$ on $M_{\gi}$
by perturbing the \gtstr s $\tv_{s,\gi}$ obtained from $\tv_s$ via
decomposition~\eqref{decomp} of~$M$.
The gluing Theorem~\ref{gluing} applied to $\varphi_{s,+}$ and $\varphi_{s,-}$
defines a path $\Phi(\varphi_{s,+}, \varphi_{s,-}, L)$ of torsion-free \gtstr
s on $M$. The restrictions satisfy
$i_{\gi}^{*}[\Phi(\varphi_{s,+}, \varphi_{s,-}, L)] = i_{\gi}^{*}[\varphi_s]$,
so the image of the path in $H^{3}(M)$ lies in
the affine space $K = [\varphi_s] + \delta(H^{2}(X))$.
This is an affine line with the same slope $2\delta([\omega])$.

Our aim is to show that for every large $L$ there is a small deformation
$(\varphi'_{s,+}(L), \varphi'_{s,-}(L))$ of $(\varphi_{s,+}, \varphi_{s,-})$
and $L+\eps_L$ 
at a bounded distance from $L$, so that $\phi_s(L)$ is isomorphic
to \mbox{$\Phi(\varphi'_{s,+}(L), \varphi'_{s,-}(L), L+\eps_L)$.} We prove
this by appealing to
to Proposition \ref{unresprop}, showing first that we can find a small
deformation such that the glued $\Phi(\varphi'_{s,+}(L), \varphi'_{s,-}(L),
L+\eps_L)$
has the same cohomology class as $\phi_s(L)$, and then checking that
the gluing is close to $\tv_s(L)$ in the relevant norms.

The difference between the cohomology classes $[\phi_s(L)]$ and
$[\Phi(\varphi_{s,+}, \varphi_{s,-}, L)]$ is independent of~$L$.
Therefore, for each sufficiently large $L$, there is
an $L+\eps_L$ of bounded distance to~$L$ and a matching pair
$(\varphi'_{s,+}(L), \varphi'_{s,-}(L)) \in \ycal{R}'$,
such that $\phi_s(L)$ is cohomologous to
the glued \gtstr\ $\Phi(\varphi'_{s,+}(L), \varphi'_{s,-}(L), L+\eps_L)$.
In fact, because the RHS of (\ref{coneeq}) is dominated by the
$2L\delta([\omega])$ term for large $L$, the distance between
$(\varphi'_{s,+}(L), \varphi'_{s,-}(L))$ and $(\varphi_{s,+}, \varphi_{s,-})$ is of
order $1/L$, as $L\to\infty$, measured in the $C^{1}$ norm (since $\ycal{R}$
has finite dimension all sensible norms are Lipschitz equivalent).
Hence the difference between $\Phi(\varphi_{s,+}, \varphi_{s,-}, L)$ and
$\Phi(\varphi'_{s,+}(L), \varphi'_{s,-}(L), L)$ is of order $1/L$ in $C^{0}$
norm. As the volume growth is of order $L$ it follows also that the difference
is of order $L^{-1/2}$ in $L^{2}$-norm, and order $L^{-13/14}$ in
$L^{14}_{1}$-norm.

Now $\phi_s(L)$ and $\Phi(\varphi'_{s,+}(L), \varphi'_{s,-}(L),
L+\eps_L(L))$
are both torsion-free perturbations of $\tv_s(L)$ within its cohomology class,
so we can try and use Proposition
\ref{unresprop} to show that they are diffeomorphic.
For large $L$ the difference between
$\Phi(\varphi'_{s,+}(L), \varphi'_{s,-}(L), L+\eps_L)$
and $\tv_s(L)$ is dominated by the difference between $\tv_{s,\gi}$
and $\varphi_{s,\gi}$, which is estimated in terms of $\tpar$
in (\ref{eacdetasmalleq}). Therefore if $s$ is sufficiently small
then for all sufficiently large $L$ the estimates required to
apply Proposition \ref{unresprop} are satisfied, and
\[ \Phi(\varphi'_{s,+}(L), \varphi'_{s,-}(L), L+\eps_L) \cong \phi_s(L) . \]
This completes the proof of Theorem~\ref{pullthm}.
\end{proof}

\begin{proof}[Proof of Theorem~\ref{pullcor}]
We know from the argument of Theorem~\ref{pullthm} and the preceding remarks
that the pair $\varphi'_{s,+}(L),\varphi'_{s,-}(L)$, for each
$L>L_1$, is contained in the pre-moduli space $\ycal{R}'$ which we may
assume connected. As discussed earlier in this subsection, the map
$\Phi(\varphi_+, \varphi_-, L)$ induces a continuous function from
$\ycal{R}'\times(L_1,\infty)$ to the $G_2$ moduli space for~$M$.
We find that, for $L>L_1$, the torsion-free \gtstr{}
$\Phi(\varphi_{s,+},\varphi_{s,-},L)$  is a deformation of
$\Phi(\varphi'_{s,+}(L),\varphi'_{s,-}(L),L)$.

By Theorem~\ref{pullthm}, we may further replace
$\Phi(\varphi'_{s,+}(L), \varphi'_{s,-}(L), L)$, with the torsion-free
\gtstr{} $\phi_s(L-\eps_L)$, assuming sufficiently large~$L$. We saw
above that the cohomology class $[\phi_s(L)]$ depends continuously
on~$L$ and it is not difficult to check, using Theorem~\ref{maincptthm}(i)
that the forms $\phi_s(L)$ define a continuous path in the $G_2$
moduli space for~$M$. Thus we may further replace $\phi_s(L-\eps_L)$
by the torsion-free \gtstr{} $\phi_s(0)$. We claim that the latter is
isomorphic to the \gtstr~$\varphi_s$.

By definition just before Theorem~\ref{pullthm}, $\phi_s(0)$ is a
perturbation of $G_2$ 3-form $\tv_s(0) = \tv_s$ given by
Proposition~\ref{intermed.phi} and
$\phi_s(0)-\tv_s$ is exact. On the other hand, recall from
\eqref{joyce.g2} that $\varphi_s$ is a perturbation of $G_2$ 3-form 
$\phin_s$ by an exact form. The latter two exact forms may be assumed
`small' in the sense of~\eqref{tor} by choosing a small~$s$. Furthermore,
$\tv_s-\phin_s$ is exact by Proposition \ref{intermed.phi}\ref{exactitem} and
the argument of \S\ref{four.three} (\eqref{tor} and \eqref{drcsmalleq}) again
shows that $\tv_s-\phin_s$ is small. Proposition~\ref{unresprop} now ensures
that $\varphi_s$ and $\phi_s(0)$ are diffeomorphic, for every
sufficiently small~$s$.
\end{proof}

\section{Connected sums of EAC $G_2$-manifolds}
\label{connectsums}

We now revisit the orbifold $T^7/\Ga$ discussed in~\S\ref{simplesub} but this
time we shall split $T^7/\Ga$ into two connected components,
$\hat{M}_{0,\pm}$ say, along a different orbifold hypersurface $\hat{X}_0$ which
is the image of the 6-torus $\hat{T}^6=\{x_5\equiv 1/8 \mod\ZE\}\subset T^7$.
(As before, $x_k$ modulo $\ZE$ denote the standard coordinates on $T^7$
induced from~$\RE^7$.) As remarked in \S\ref{more.examples}, this choice
does not produce an irreducible EAC \gtmfd\ but is interesting for its
relation to the compact \gtmfd s and EAC Calabi--Yau 3-folds constructed
in~\cite{kovalev03,kovalev-lee08}.

More precisely, we shall show that the corresponding EAC \gtmfd s
$\hat{M}_\gi$ are of the form $S^1 \times W$, where $W$ is a known complex
3-fold obtained by the algebraic methods of \cite{kovalev-lee08} with an EAC
Calabi--Yau structure coming from a result in~\cite{kovalev03}.
Application of Theorem~\ref{pullcor} then shows that the \gtstr\ on $M$
constructed in~\cite{joyce00} by resolution of singularities of~$T^7/\Ga$
is a deformation of the \gtstr\ obtainable from~\cite{kovalev03} by regarding
$M$ as generalized connected sum of EAC \gtmfd s $\hat{M}_\gi$.

\subsection{A $G_2$-manifold with holonomy $SU(3)$.}
\label{with.K3}

Recall that the singular locus of $T^7/\Ga$ consists of 12 disjoint copies
of $T^3$, the union of 3 subsets of 4 copies of $T^3$ corresponding
to the fixed point set of, respectively, the involutions $\alpha,\beta,\gamma$
defined in~\eqref{simplegammaeq}. Each of the 4 copies of $T^3$ in
the singular locus of $T^7/\Ga$, arising from the fixed points of~$\beta$,
intersects $\hat{X}_0$ in a 2-torus. The other 8 copies of $T^3$ in the
singular locus do not meet $\hat{X}_0$. Let $\hat{M}_{0,+}$ denote the
connected component of $(T^7/\Ga)\setminus \hat{X}_0$ containing the image of
$\{x_5=0\}$. Then $\hat{M}_{0,+}$ contains all the 3-tori coming from the
fixed point set of~$\alpha$, whereas those coming from~$\gamma$ are in the image
of $\{x_5=\frac14\}$ and contained in~$\hat{M}_{0,-}$.

It is easy to see that
$\hat{X}_0=\hat{T}^6/\gen{\beta}\cong (T^4/{\pm 1})\times T^2$ and
that the orbifolds $\hat{M}_{0,\pm}$ are diffeomorphic, via the
involution of $T^7/\Ga$ induced by the map
\begin{equation}\label{shift.5}
(x_1,x_2,x_3,x_4,,x_5,x_6,x_7)\mapsto
(x_1,x_2,x_3,x_4,x_5+\quart,x_6,x_7).
\end{equation}
The above is quite similar to the discussion in \S\ref{simplesub}
and~\S\ref{EACex}. In particular, it can be shown that the
map~\eqref{shift.5} induces an isometry of the EAC $G_2$-manifolds
$\hat{M}_{\pm}$ constructed from $\hat{M}_{0,\pm}$ (compare
Remark~\ref{isometry}).

Notice also that the pre-image of $\hat{M}_{0,+}$ in
$T^7/\gen{\alpha,\beta}$ consists of two connected components and $\gamma$
maps one of these diffeomorphically onto the other. In light of this, we
can identify
$\hat{M}_{0,+}\cong
\bigl(\{|x_5|<\textstyle{\frac18}\}\times T^6\bigr)/\gen{\alpha,\beta}$,
and disregard $\gamma$ when restricting attention to $\hat{M}_{0,+}$. Replacing 
the interval $[-\frac18,\frac18]$ by a copy of $\RE$, with the coordinate
still denoted by $x_5$, is equivalent to attaching a cylindrical end
to~$\hat{M}_{0,+}$. We have a diffeomorphism
\begin{equation}\label{no.gamma}
\hat{M}_{0,+}\cong
\bigl((\RE_{x_5}\times T^5\bigr)/\gen{\alpha,\beta}\bigr)\times S^1_{x_1}.
\end{equation}
We see at once that the resolution of singularities of $\hat{M}_{0,+}$ amounts
to resolving a 6-dimensional orbifold. We shall relate the latter resolution
to blowing up {\em complex orbifolds}. Identify $\RE^7\cong\RE\times\CX^3$
using a real coordinate and three complex coordinates,
\begin{equation}\label{cx.coord}
\theta=x_1,\quad z_1=x_5+ix_4,\quad z_2=x_2+ix_3,\quad z_3=x_6+ix_7.
\end{equation}
In these coordinates, the involutions $\alpha,\beta$ are
{\em holomorphic in $z_k$}
$$
\alpha(\theta,z_1,z_2,z_3)=(\theta,-z_1,z_2,-z_3),
\qquad
\beta(\theta,z_1,z_2,z_3)=(\theta,z_1,-z_2,\half-z_3).
$$

For the first step of the procedure explained in~\S\ref{EACex} we consider
$\RE_{x_5}\times\hat{T}^6/\gen{\beta}$. It is well-known that the resolution
of singularities of $T^4/{\pm 1}$ using Eguchi--Hanson spaces (see
p.\pageref{EH}) produces a Kummer K3 surface, $Y$ say. The Kummer
construction defines on~$Y$ a one-parameter family of torsion-free
$SU(2)$-structures, i.e.\ Ricci-flat Kähler structures, with a limit
corresponding to the flat hyper-Kähler structure on~$T^4/{\pm 1}$ induced
from the Euclidean~$\RE^4$ \cite{LS}.
Cf.~\eqref{EHmetr}; the parameter, still denoted by $s > 0$, is
proportional to the diameter of the exceptional divisors on~$Y$.
We thus obtain $S^1_{x_1}\times S^1_{x_4}\times\RE_{x_5}\times Y$ with
a product torsion-free \gtstr\ induced by a Kummer hyper-Kähler structure
on~$Y$ (cf.~\eqref{g2su2}).

The Kummer construction can be performed $\alpha$-equivariantly, so that 
$\alpha$ induces an involution on~$Y$, say $\rho_\alpha$, which
preserves the $SU(2)$-structure. The quotient~\eqref{no.gamma}
takes the form $S^1_\theta\times Z_0$, where
$Z_0=(\RE_{x_5}\times S^1_{x_4}\times Y)/\gen{\alpha}$ is a well-defined
complex orbifold. Noting that $\RE_{x_5}\times S^1_{x_4}$ is biholomorphic to
$\CX^{\times}=\CX\setminus\{0\}$, we can extend $\alpha$ holomorphically to an
involution of $Y\times\CP^1$ (identifying $\CP^1\cong\CX\cup\{\infty\}$). The
restriction of $\alpha$ to $\CP^1$ may be written as $\zeta\mapsto 1/\zeta$,
where $\zeta=\exp(2\pi i z_1)$; it maps $0$ and~$\infty$ to each other
and fixes precisely two points $\pm 1$, both in the image of
$\RE_{x_5}\times S^1_{x_4}$ (the circle $S^1_\theta$ does not yet concern
us). We can write $Z_0$ and its compactification~$Z$ as
\begin{equation}\label{orbi.Z}
Z_0=(Y\times\CX^\times)/\gen{\alpha},\qquad Z=(Y\times\CP^1)/\gen{\alpha}
\end{equation}
and it is not difficult to check that $Z_0$ is the complement in $Z$ of an
{\em anticanonical divisor} $D$ biholomorphic to the K3 surface~$Y$.
The quotient of $\CP^1$ by the involution $\alpha|_{\CP^1}$ is
biholomorphic to $\CP^1$ and we shall still denote the images of the fixed
points by $\pm 1$. It follows that the second projection on $Y\times\CP^1$
descends to a holomorphic map 
\begin{equation}\label{z.fibr}
p:Z_0\to\CP^1
\end{equation}
with fibres biholomorphic to~$Y$, except that the two fibres over $\pm 1$
are biholomorphic to the quotients $Y/\gen{\rho_\alpha}$.

Denote by $\kappa_I$, $\kappa_J$, $\kappa_K$ a triple of closed 2-forms
encoding the $\rho_\alpha$-invariant $SU(2)$-structure on~$Y$.
Here $\kappa_I$ is the Kähler form of the Ricci-flat Kähler metric and
$\kappa_J+i\kappa_K$ is a nowhere-vanishing holomorphic (2,0)-form,
sometimes called a `holomorphic symplectic form', which is unique up to a
constant complex factor. We shall always require
$\kappa_I^2=\kappa_J^2=\kappa_K^2$.

Observe that necessarily
$\rho_\alpha^*(\kappa_J+i\kappa_K)=-\kappa_J-i\kappa_K$, so $\rho_\alpha$
acts by $-1$ on $H^{2,0}(Y)$. The latter makes $Y$ into a K3
surface with `non-symplectic involution' in the sense of \cite{AlNi}, see
also~\cite{Ni}. A general property of this class of K3 surfaces is that
the sublattice of $H^2(Y,\ZE)$ fixed by $\rho_\alpha^*$ has signature
$(1,t_-)$, so we must have $\rho_\alpha^*(\kappa_I)=\kappa_I$, because
a Ricci-flat Kähler metric on~$Y$ is uniquely determined by the
cohomology class of its Kähler form.

In order to compute some topological invariants later, we shall need some
algebraic invariants of non-symplectic involutions, taken from \cite{AlNi}.
One invariant is defined as the rank $r$ of the sublattice $L_\rho$ of the
Picard lattice of $Y$ fixed by $\rho_\alpha$. It can be shown $L_\rho$ has
a natural embedding into its dual lattice $L_\rho^*$ and the quotient has
the form $L_\rho^*/L_\rho\cong(\ZE_2)^a$.  The integer $a$ is another
invariant that we shall need.

We determine the values of $r,a$ in the present example from the classification
of K3 surfaces with non-symplectic involution in~\cite{Ni}, which
includes a description of the fixed point set of~$\rho$. Since the fixed point
set of $\alpha$ has 4 components and the induced involution on $\CP^1$ fixes 2
points, we must have that the $\rho_\alpha$ fixes precisely two disjoint
complex curves and each of these has genus~1. In this situation, there is only
one possibility $r=10$, $a=8$ allowed by the classification of fixed point
sets, \cite[\S 4]{Ni} or \cite[\S 6.3]{AlNi}.

A neighbourhood of each singular point in~$Z_0$ is diffeomorphic to
$(\CX^2/{\pm 1})\times\CX$ and the singularities of $S^1_\theta\times Z_0$ may
be resolved in an $S^1_\theta$-invariant way by gluing in an Eguchi--Hanson
space, similarly to several instances discussed in \S\ref{exsec}
and~\S\ref{furthersec}. The two-step procedure of \S\ref{exsec} now
produces a 7-manifold $\hat{M}_+=S^1\times W$ with an $S^1$-invariant,
product \gtstr\ having `small' torsion. The torsion-free \gtstr\ on
$\hat{M}_+$ obtained by Theorem~\ref{eacperturbthm} is necessarily of
product type \eqref{g2su3} induced by a EAC Calabi--Yau structure on~$W$.

We shall now show that after slightly changing some details of the method
of~\S\ref{exsec} the same torsion-free \gtstr\ on $\hat{M}_+$ can be
recovered, up to an isomorphism, by constructing an EAC Calabi--Yau
structure on~$W$ using the method of~\cite{kovalev03,kovalev-lee08}. Recall
from \S\ref{su2sub} that a Calabi--Yau structure on a 6-manifold may be
determined by the complex structure (or, equivalently, the real part of a
non-vanishing holomorphic 3-form) and the Kähler form.

The manifold $W$ has a `natural' complex structure defined by blowing-up the
singular locus of the complex orbifold~$Z_0$. This is an instance of a
general construction of quasiprojective complex $3$-folds with trivial
canonical bundle from $K3$ surfaces with non-symplectic involution.

\begin{prop}[{\cite[\S 4]{kovalev-lee08}}]\label{W}
Suppose that $\rho$ is a non-symplectic involution of a K3 surface $Y$
with invariants $r,a$ and with a non-empty set of fixed points. Suppose that
$\tau$ is a holomorphic involution of $\CP^1$ fixing precisely two
points. Let $\oW$ be the blow-up of the singular locus of
$(Y\times\CP^1)/(\rho,\tau)$ and let $D\subset\oW$ be the pre-image of
$Y\times\{p\}$, for some $p\in\CP^1$ with $\tau(p)\neq p$.

Then both $\oW$ and $W=\oW{\setminus}D$ are non-singular and
simply-connected and $D$ is an anti\-canonical divisor (biholomorphic to~$Y$)
in $\oW$ with the normal bundle of $D$ holomorphically trivial.
Also, $b^2(\oW)=3+2r-a$ and $b^3(\oW)=44-2r-2a$ and the pull-back map
\mbox{$\iota:H^2(\oW,\RE)\to H^2(D,\RE)$} induced by the embedding has rank~$r$.
\end{prop}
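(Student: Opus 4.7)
The plan is to realise $\oW$ as the quotient $\tilde V/G$, where $G=\langle(\rho,\tau)\rangle\cong\ZE_2$ and $\tilde V$ is the blow-up of $Y\times\CP^1$ along the fixed set of~$G$. One first identifies this fixed set as $F=C\times\{q_+,q_-\}$, where $C\subset Y$ is the (non-empty) fixed curve of~$\rho$ and $q_\pm\in\CP^1$ are the two points fixed by~$\tau$. In local coordinates around a fixed point the action reads $(z_1,z_2,z_3)\mapsto(z_1,-z_2,-z_3)$, so the quotient $Z_\tau=(Y\times\CP^1)/G$ has $A_1$ surface singularities transverse to two disjoint copies of~$C$. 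Blowing these up is equivalent to first resolving $Y\times\CP^1$ along~$F$ to produce a smooth $\tilde V$ on which $G$ fixes the exceptional divisor $\tilde E$ pointwise, and then taking the $G$-quotient; the $-1$ action on the normal direction to~$\tilde E$ makes this quotient smooth, and equal to~$\oW$. In particular $\oW$ is smooth and $W=\oW\setminus D$ is smooth because $D$ lies away from the exceptional locus. Simple connectedness of $\oW$ follows from simple connectedness of $Y\times\CP^1$, non-emptiness of~$F$, and preservation of~$\pi_1$ by smooth blow-ups. To show $\pi_1(W)=1$ one would exhibit a rational curve $\Sigma\subset\oW$ meeting $D$ transversely in a single point, for instance a generic rational curve in the total space of the natural holomorphic map $p:\oW\to\CP^1/\tau\cong\CP^1$ whose generic fibre is~$D$; then $\Sigma\cap W$ is a punctured sphere in~$W$ bounding the meridian around~$D$.

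For the anticanonical assertion, the ramification locus of $\pi:Y\times\CP^1\to Z_\tau$ has complex codimension two, so $\pi^*K_{Z_\tau}=K_{Y\times\CP^1}$, while the resolution $f:\oW\to Z_\tau$ of the $A_1$ singularities is crepant, giving $K_{\oW}=f^*K_{Z_\tau}$. The divisor $Y\times\{p,p'\}$ on $Y\times\CP^1$ is anticanonical and $G$-invariant, disjoint from~$F$, and descends under~$\pi$ to a smooth divisor $D\cong Y$ in~$Z_\tau$; since $D$ avoids the exceptional set, $-K_{\oW}=f^*D=D$ as claimed. The normal bundle $N_{D/\oW}$ is trivial because $D$ is a smooth fibre of the proper holomorphic map~$p$, so $N_{D/\oW}$ is pulled back from the one-dimensional tangent space to~$\CP^1$ at~$p$.

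The Betti numbers then follow from the standard blow-up formula $H^k(\tilde V,\RE)=H^k(Y\times\CP^1,\RE)\oplus H^{k-2}(F,\RE)$ combined with the identification $H^k(\oW,\RE)=H^k(\tilde V,\RE)^G$, valid over~$\RE$ for any finite group action. The $G$-action on $H^*(Y\times\CP^1)$ is determined by $\rho^*$ (trivial on $H^0(Y),H^4(Y)$ and with $r$-dimensional invariant subspace $L_\rho\otimes\RE$ in~$H^2(Y)$) together with the triviality of~$\tau^*$ on~$H^*(\CP^1)$; on the exceptional summand the action is trivial because $G$ fixes $F$ pointwise and $-1$ acts trivially on the projectivised normal bundle. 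Substituting the values $b^0(C)=1+(r-a)/2$ and $b^1(C)=22-r-a$ from the classification of fixed loci of non-symplectic involutions~\cite{Ni,AlNi} (which hold uniformly whenever the fixed set is non-empty, including the exceptional $(r,a)=(10,8)$ case of interest) yields the claimed formulas for $b^2(\oW)$ and $b^3(\oW)$. Finally, since $D$ is disjoint from the exceptional set, $\iota$ factors through the restriction $H^2(Z_\tau,\RE)=(L_\rho\otimes\RE)\oplus H^2(\CP^1,\RE)\to H^2(D,\RE)=H^2(Y,\RE)$; the $H^2(\CP^1)$ summand pulls back to zero on the point~$\{p\}$, while $L_\rho\otimes\RE$ maps isomorphically onto its image in $H^2(Y)$, so $\iota$ has rank exactly~$r$. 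The most delicate step is expected to be the simple connectedness of~$W$, via explicit construction of a rational curve meeting $D$ transversely in a single point.
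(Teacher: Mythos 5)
This proposition is not proved in the paper at all: it is quoted verbatim from \cite[\S 4]{kovalev-lee08}, so there is no in-paper argument to compare against. Judged on its own, your proposal is a correct reconstruction of the standard proof: the identification $\oW\cong\widetilde{V}/G$ with the local model $(z_1,z_2,z_3)\mapsto(z_1,-z_2,-z_3)$, the crepancy/ramification argument for $-K_{\oW}=D$, the trivial normal bundle via $D$ being a reduced fibre of $\oW\to\CP^1/\tau$, the $G$-invariant Künneth--blow-up computation of $b^2,b^3$ (your substitutions $b^0(C)=1+(r-a)/2$, $b^1(C)=22-r-a$ do reproduce $3+2r-a$ and $44-2r-2a$, including the $(r,a)=(10,8)$ case used later in the paper), and the rank-$r$ statement for $\iota$ all check out.

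The one step you should tighten is the simple connectedness of $W$, which you yourself flag as delicate. A \emph{generic} rational curve of the kind you describe --- the image of $\{y\}\times\CP^1$ for generic $y\in Y$ --- meets $D$ transversely in \emph{two} points, namely the images of $(y,p)$ and $(y,\tau(p))$, which correspond to $y$ and $\rho(y)$ under $D\cong Y$; the resulting twice-punctured sphere only shows that the two meridians are conjugate (inverse), and does not kill them. To get a curve meeting $D$ transversely in a single point you must take $y$ in the fixed curve of $\rho$, so that $\{y\}\times\CP^1$ is $G$-invariant and its image is a section of $\oW\to\CP^1/\tau$; the points $(y,p)$ and $(y,\tau(p))$ are then identified in the quotient. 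This is exactly where the hypothesis that $\rho$ has non-empty fixed locus enters, and without it the conclusion $\pi_1(W)=1$ genuinely fails, so the word ``generic'' should be replaced by this explicit choice. With that correction the argument is complete.
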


In particular, $W$ admits nowhere-vanishing holomorphic $(3,0)$-forms.
An example of such form is obtained by starting on
$\RE_{x_5}\times S^1_{x_4}\times Y$ with the wedge product of
$d\zeta/\zeta=dz_1=dx_5+idx_4$ and the `obvious' pull-back of a holomorphic
symplectic form on~$Y$. This $(3,0)$-form is $\alpha$-invariant and
descends to $Z_0$. Denote its pull-back via the blow-up $W\to Z_0$ by
$\Omega'+i\Omega''$. This form is well-defined and may be alternatively
obtained using the following resolution of singularities commutative
diagram
$$
\xymatrix{
\widetilde{W} \ar[d] \ar[r] &W \ar[d] \\
Y\times\CX^\times \ar[r]     &Z_0}
$$
where $\widetilde{W}\to Y\times\CX^\times$ is the blow-up of the fixed
point set of~$\alpha$ and $\widetilde{W}\to W$ is the quotient map for the
involution of $\widetilde{W}$ induced by~$\alpha$.

We next construct a suitable Kähler form on~$W$. The form $idz_1\we d\bar{z}_1
+\kappa_I$ defines an \mbox{$\alpha$-invariant} Ricci-flat Kähler metric
on $Y\times\CX^\times$. Pulling back to $W$ similarly to above, we obtain a
2-form $\omega_0$ which is a well-defined Kähler form away from the
exceptional divisor $E$ on~$W$. The exceptional divisors on~$Y$ arising from
the Kummer construction induce divisors on~$Z_0$, by taking a product with
$\CX^\times$ and dividing out by~$\alpha$. The proper transform of these
defines a divisor, F say, on W. By choosing the parameter $s$ in the Kummer
construction sufficiently small we achieve that the curvature of $\omega_0$
is small away from a tubular neighbourhood of~$F$.
Note that $F$ does not meet $E$ because the fixed point sets of $\alpha$ and
$\beta$ do not meet (see~\S\ref{simplesub}). We can choose disjoint tubular
neighbourhoods of $E$ and of $F$. Then on the intersection of a tubular
neighbourhood $V$ of $E$ with the domain of $\omega_0$ the metric $\omega_0$ is
close to flat whenever $s$ is sufficiently small.

On the other hand, by taking a product of the Eguchi-Hanson
metric~\eqref{EHmetr} (with the same value of $s$) and the standard Kähler
metric on an open domain in~$\CX$ we obtain a Kähler form $\omega_{EH}$
which is defined near $E$. With an appropriate
choice of~$V$, we can smoothly interpolate between the Kähler potentials of
$\omega_0$ and $\omega_{EH}$ to obtain a closed real $(1,1)$-form $\omega_s$,
so that $\omega_s^3\neq 0$ and $\omega_s$ is a well-defined Kähler form on~$W$.
An argument similar to that in \S\ref{four.three} shows we can perform this 
construction of $\omega_s$ without introducing any more torsion of the
corresponding \gtstr{} than we would if $\omega_0$ was actually flat.
That is, the closed $S^1$-invariant \gtstr{} $\varphi'_{W,s}=\Omega' +
d\theta \wedge \omega_s$ on the 7-manifold $\hat{M}_+$ has `small' torsion
in the sense of Proposition~\ref{intermed.phi}. Here we take $\psi=\psi_s=
\Theta(\varphi'_{W,s}) - d\theta\we\hat\Omega''-\half\omega_s\we\omega_s$.
Then Theorem~\ref{eacperturbthm} produces an $S^1$-invariant
torsion-free EAC \gtstr\ $\varphi_{W,s}+d\eta_s$ on $S^1\times W$
determined by an EAC Calabi--Yau structure on~$W$ (cf.~\eqref{g2su2}).
Remark that the starting \gtstr\ with small torsion and the choice of
$\psi$ may differ by a `small amount' from those described
in~\S\ref{exsec}, but the resulting torsion-free \gtstr s are isomorphic
by Proposition~\ref{unresprop}.

The latter EAC Calabi--Yau structure is asymptotic on the end
$\RE_{>0}\times S^1\times Y$ of~$W$ to the product Calabi--Yau structure
corresponding to the 
hyper-Kähler structure on~$Y$ and
is obtained by the following `non-compact version of the Calabi conjecture'.

\begin{thm}[{\cite[\S 3]{kovalev03}}]\label{bCY}
Let $(\oW,\overline{\omega})$ be a simply-connected complex 3-fold and
suppose that a K3 surface $D\subset\oW$ is an anticanonical divisor with
the normal bundle of $D$ holomorphically trivial and
$W=\oW\setminus D$ simply-connected.
Let $\kappa_I$, $\kappa_J$, $\kappa_K$ be a triple of closed 2-forms
inducing a Calabi--Yau structure on~$D$, as above.

Suppose that $\tilde\omega$ is a Kähler form on~$W$ which is
asymptotically cylindrical in the following sense. There is a meromorphic
function $z$ on $\oW$ vanishing to order one precisely on~$D$. On the
region $\{0<|z|<\eps\}$, for some $\eps>0$, $\omega$ has the asymptotic form
$$
\kappa_I+dt\we d\theta+d\tilde\psi
$$
where $\exp(-t-i\theta)=z$
and a 1-form $\tilde\psi$ is exponentially decaying with all derivatives
as $t\to\infty$.

Then $W$ admits a asymptotically cylindrical Ricci-flat Kähler metric with
Kähler form $\omega$ and a nowhere-vanishing holomorphic $(3,0)$-form
$\Omega'+i\Omega''$ such that
$$
\omega=\tilde\omega+i\p\bar\p\psi_\infty
$$
and $\Omega$ on the region $\{0<|z|<\eps\}$ has the asymptotic form
$$
(\kappa_J+i\kappa_K)\we (dt+id\theta)+d\Psi_\infty,
$$
where $\psi_\infty,\Psi_\infty$ are exponentially decaying with all derivatives
as $t\to\infty$.
\end{thm}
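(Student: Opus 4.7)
The plan is to adapt Yau's proof of the Calabi conjecture to the asymptotically cylindrical setting. First I would construct a nowhere-vanishing holomorphic $(3,0)$-form $\Omega'+i\Omega''$ on $W$ that matches the prescribed cylindrical model $(\kappa_J+i\kappa_K)\we(dt+id\theta)$ on the end up to an exponentially decaying error. Since $D\subset\oW$ is an anticanonical divisor with holomorphically trivial normal bundle, the Poincar\'e residue identifies $H^0(\oW,K_{\oW}(D))$ with $H^0(D,K_D)$, and since $D$ carries the prescribed holomorphic symplectic form $\kappa_J+i\kappa_K$, there is a global meromorphic $(3,0)$-form on $\oW$ with first-order pole along $D$ and residue $\kappa_J+i\kappa_K$. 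Restricting to $W$ and writing the pole in terms of the coordinate~$z$ gives leading order $(\kappa_J+i\kappa_K)\we\frac{dz}{z}$, and $dz/z=-(dt+id\theta)$ on the end; higher-order corrections from the global meromorphic form are holomorphic in $z$ and hence decay like $|z|=e^{-t}$ with all derivatives.

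With $\Omega'+i\Omega''$ in hand, the Calabi--Yau condition on $\omega=\tilde\omega+i\p\bar\p\psi$ reduces to a complex Monge--Amp\`ere equation
\begin{equation*}
(\tilde\omega+i\p\bar\p\psi)^3 = e^f \tilde\omega^3,
\end{equation*}
where $f$ is chosen so that $e^f\tilde\omega^3$ is a constant multiple of $i(\Omega'+i\Omega'')\we\overline{(\Omega'+i\Omega'')}$. The asymptotic forms of $\tilde\omega$ and $\Omega'+i\Omega''$ are arranged so that $f$ is exponentially decaying with all derivatives as $t\to\infty$. I would solve this equation by the continuity method along the family $(\tilde\omega+i\p\bar\p\psi_t)^3 = e^{tf+c_t}\tilde\omega^3$, with constants $c_t$ chosen so that $e^{tf+c_t}-1$ is $L^1$-integrable on $W$. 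Openness follows from the implicit function theorem applied to the linearisation, which is essentially the Laplacian of $\omega_t$ between weighted H\"older spaces $\holdad{k+2}\to\holdad{k}$: by the Fredholm theory of Lockhart--McOwen quoted in \S\ref{as.cyl}, this operator is an isomorphism onto its image for any weight $\delta>0$ smaller than the square root of the first non-zero eigenvalue of the Hodge Laplacian on the asymptotic cross-section $S^1\times D$.

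Closedness reduces to a priori estimates on $\psi_t$ in weighted norms. The $C^0$-estimate is the most delicate point on a non-compact manifold, but can be achieved by Moser iteration exploiting integrability and exponential decay of $e^{tf+c_t}-1$, together with a Green's function bound coming from the EAC geometry of $(W,\tilde\omega)$. The second- and third-order estimates of Yau are local in nature and carry over verbatim once $\|\psi_t\|_{C^0}$ is under control, since the background geometry has bounded curvature and injectivity radius bounded away from zero. Iterating gives a solution $\psi_\infty=\psi_1$ at $t=1$, smooth on all of $W$.

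The main obstacle is to upgrade $\psi_\infty$ from being merely bounded to being exponentially asymptotically cylindrical. This step parallels the argument of \S\ref{exp.dec}: the equation satisfied by $\psi_\infty$ on the cylindrical end, once its right-hand side $f$ and the non-linear terms are viewed as exponentially decaying perturbations, allows one to bootstrap in weighted Sobolev spaces $\sobd{k}$ using the Fredholm theory of the Laplacian of the asymptotic product metric on $S^1\times D\times\bbr$, provided $\delta$ is chosen below the critical weight determined by the spectrum of the cross-sectional Hodge Laplacian. Once $\psi_\infty\in\sobd{k}$ for all $k$, the K\"ahler form $\omega=\tilde\omega+i\p\bar\p\psi_\infty$ inherits exponential decay of its difference from $\kappa_I+dt\we d\theta$, and re-expanding $\Omega'+i\Omega''$ relative to the Ricci-flat metric produces the asymptotic form $(\kappa_J+i\kappa_K)\we(dt+id\theta)+d\Psi_\infty$ with $\Psi_\infty$ exponentially decaying as required.
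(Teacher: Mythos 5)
The paper does not actually prove Theorem~\ref{bCY}: it is imported verbatim from \cite[\S 3]{kovalev03} and used as a black box, so there is no in-paper argument to compare yours against. Your sketch follows the same overall architecture as the cited source (and its Tian--Yau antecedent): obtain the holomorphic $(3,0)$-form from the residue sequence for the anticanonical divisor $D$ (using the coordinate $z$ so that $dz/z=-(dt+id\theta)$, with the higher-order terms $O(e^{-t})$), reduce the Ricci-flat condition to a complex Monge--Amp\`ere equation $(\tilde\omega+i\p\bar\p\psi)^3=e^f\tilde\omega^3$ with $f$ exponentially decaying, run a continuity method in weighted spaces with a Moser-iteration $C^0$ estimate and Yau's local higher-order estimates, and finally bootstrap exponential decay much as in \S\ref{exp.dec}. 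At that level the proposal is faithful to what is being cited.

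There is, however, a concrete gap in the way you set up the continuity method. A manifold with a cylindrical end has infinite volume, so ``choosing constants $c_t$ so that $e^{tf+c_t}-1$ is $L^1$'' is vacuous: any $c_t\neq 0$ makes $e^{tf+c_t}-1$ tend to the non-zero constant $e^{c_t}-1$ and hence non-integrable, so you are forced to take $c_t=0$; but then Stokes' theorem applied to $(\tilde\omega+i\p\bar\p\psi_t)^3-\tilde\omega^3=d\bigl(d^c\psi_t\we(\tilde\omega^2+\cdots)\bigr)$ shows that a solution decaying with its derivatives can exist only if $\int_W(e^{tf}-1)\,\tilde\omega^3=0$, a condition that is not preserved along the path $t\mapsto tf$ and need not hold even at $t=1$. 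The same obstruction resurfaces in your openness step: for $0<\delta<\sqrt{\lambda_1}$ the Laplacian $\holdad{k+2}\to\holdad{k}$ is injective but has index $-1$ (its image is the codimension-one subspace of functions with zero integral), so ``isomorphism onto its image'' is not enough --- you need surjectivity. The cure is to enlarge the space of unknowns by the span of $\rho t$: since $t=-\log|z|$ is pluriharmonic on the end, $i\p\bar\p(A\rho t)$ is compactly supported and the asymptotically cylindrical form of $\omega$ is unaffected, while $\Delta(\rho t)$ has non-zero total integral, which simultaneously kills the cokernel of the linearisation and absorbs the integral obstruction (at the cost that the potential is then decaying only up to a multiple of $\rho t$, contributing a compactly supported exact correction to $\tilde\omega$). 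Without this or an equivalent device the continuity method as you have written it does not close. Relatedly, the final upgrade from a bounded solution to one exponentially asymptotic to the cylindrical model is the genuinely delicate part of the cited theorem and deserves more than the single sentence you give it, although bootstrapping in $\sobd{k}$ below the first indicial root is the right idea.
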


In the present example, we have $\tilde\psi=0$ by construction.
The function $\psi_\infty$ is unique by~\cite[Propn.~3.11]{kovalev03}.
The uniqueness of $d\Psi_\infty$ follows from the uniqueness, up to a
constant factor, of a non-vanishing holomorphic 3-form on $W$ with a simple
pole along $D=\oW{\setminus}W$.
Thus the \gtstr\ obtained by application of Theorem~\ref{eacperturbthm} to
the cylindrical end manifold $\hat{M}_+=S^1\times W$ with \gtstr{}
$\varphi'_{W,s}$ is unique and may be recovered from a blow-up of complex
orbifold and the Calabi-Yau analysis.

The Betti numbers for our example of $\hat{M}_+$ may be determined
from those of $\oW$ using Proposition~\ref{W} as we know that
$r=10$, $a=8$. We obtain
$$
b^{3}(\oW) = 44-20-16=8\text{ and }
b^2(\oW)=3+20-8=15,
$$
and then, using the Mayer--Vietoris exact sequence for $\oW=W\cup D$ similarly
to~\cite[\S 8]{kovalev03} and \cite[\S 2]{kovalev-lee08},
$$
b^2(W)=b^2(\oW)-1=14\text{ and }
b^3(W)=b^3(\oW)+22-b^2(W)+\dim\Ker\iota=20,
$$
using also the rank-nullity for~$\iota$. Therefore,
$$
b^2(\hat{M}_+)=14\text{ and }b^3(\hat{M}_+)=34
$$
by the Künneth formula.

The Betti numbers of $W$ and $\hat{M}_+$ can also be recovered using the
method explained at the end of~\S\ref{five.one}.

\subsection{The connected sum construction of compact irreducible
  $G_2$-manifolds revisited}

Everything that we said in the previous subsection about $\hat{M}_{0,+}$
and $\hat{M}_+$ can be repeated, with a change of notation, for $\hat{M}_{0,-}$
and $\hat{M}_-$. In particular $\hat{M}_-=W\times S^1$ with a product EAC \gtstr.
However, the roles of $\alpha$ and $\gamma$ are swapped for $\hat{M}_{0,-}$
and the choice of identification $\RE^7=\RE_\theta\times\CX^3$ has to be
revised too.

For $\hat{M}_{0,-}$, we set
\begin{equation}\label{w.coord}
\theta=x_4,\quad w_1=x_5+ix_1,\quad w_2=x_2+ix_6,\quad w_3=x_7+ix_3,
\end{equation}
so that
$$
\beta(\theta,w_1,w_2,w_3)=(\theta,w_1,{\textstyle\frac{i}2}-w_2,-w_3),
\qquad
\gamma(\theta,w_1,w_2,w_3)=
(\theta,\half-w_1,w_2,\half-w_3).
$$

We are interested in the image in $\hat{X}_0$ of the 4-torus corresponding
to $x_2,x_3,x_6,x_7$. Writing
$$
\kappa^0_1=dx_2\we dx_3+dx_6\we dx_7,\quad
\kappa^0_2=dx_2\we dx_6+dx_7\we dx_3,\quad
\kappa^0_3=dx_2\we dx_7+dx_3\we dx_6,
$$
we see that with respect to the complex structure  on $\RE^4_{x_2,x_3,x_6,x_7}$
defined by $z_2,z_3$ in~\eqref{cx.coord} the Euclidean metric is Kähler
with Kähler form $\kappa^0_1$ and a $(2,0)$-form $\kappa^0_2+i\kappa^0_3$.
With respect to the complex structure of $w_2,w_3$ the Kähler
form is $\kappa^0_2$ and a $(2,0)$-form is $\kappa^0_1-i\kappa^0_3$.

It follows by the symmetry of even permutations of $x_2,x_3,x_6,x_7$ and the
equivariant properties of the Kummer construction that a similar statement
holds for a triple of 2-forms, say $\kappa_I,\kappa_J,\kappa_K$ defining
the hyper-Kähler structure on the resolution $Y$ of
$T^4_{x_2,x_3,x_6,x_7}/\gen{\beta}$.

In other words, the two Kummer K3 surfaces defined by using $z$- and
$w$-coordinates correspond to choices of two anticommuting integrable
complex structures say $I$ and $J$ coming from the hyper-Kähler structure
on~$Y$. The $\kappa_I,\kappa_J,\kappa_K$ are the Kähler forms corresponding,
respectively, to $I$,$J$,$K=IJ$.

Recall from~\S\ref{su2sub} that the product \gtstr\ on a cylinder
$\RE_t\times S^1_{\theta_+}\times S^1_{\theta_-}\times D$ corresponding to a
hyper-Kähler structure on~$D$ is induced by the 3-form
\begin{equation}\label{phi.d}
\varphi_D=d\theta_+\we d\theta_-\we dt+
d\theta_+\we\kappa_I+d\theta_-\we\kappa_J+dt\we\kappa_K.
\end{equation}
Here $\theta_+=x_1$, $\theta_-=x_4$, corresponding to
\eqref{cx.coord},\eqref{w.coord} and $x_5=t$.
The formula \eqref{phi.d} is preserved by the transformation
$$
\theta_+\mapsto\theta_-,\quad
\theta_-\mapsto\theta_+,\quad
t\mapsto -t,\qquad
\kappa_I\mapsto\kappa_J,\quad
\kappa_J\mapsto\kappa_I,\quad \kappa_K\mapsto -\kappa_K.
$$
Notice that the transformation of $\kappa$'s corresponds precisely to
changing the complex structure on~$Y$ from $I$ to~$J$ (the latter is
sometimes called a `hyper-Kähler rotation').
It follows that we have an instance of a generalized connected sum of EAC
\gtmfd s discussed in the beginning of~\S\ref{pullsec}. In fact, more is true.

We can identify, in the present case, the isomorphism between the asymptotic
models of EAC $G_2$ 3-forms on the cylindrical ends of
$\hat{M}_\pm\cong S^1_\pm\times W_\pm$. (Here $W_\pm$ are copies of $W$
defined in the previous subsection and $\pm$ refers to using, respectively,
the notation~\eqref{cx.coord} or \eqref{w.coord}.)
On the $D\cong Y$ factor the identification is an
isometry with a change of complex structure, as discussed above. The $\pm x_5$
is the parameter along cylindrical end of $\hat{M}_\pm$, respectively. Finally, the
$S^1_+$-factor with coordinate $x_1$ is identified with a circle around the K3
divisor in $\oW_-$, whereas the $S^1_-$ factor with coordinate $x_4$
corresponds to a circle around the K3 divisor in~$\oW_+$.

The matching described above between the asymptotic models of EAC \gtmfd s
$\hat{M}_\pm$ is precisely of the type studied in~\cite{kovalev03}. In
particular, the gluing Theorem~\ref{gluing} constructs an irreducible
torsion-free \gtstr\ on $M$ regarded as the generalized connected sum of the
pair $\hat{M}_\pm$ defined above, with product EAC \gtstr s induced
by the EAC Calabi--Yau structures on $W_\pm$ in the sense of Theorem~\ref{bCY}.

The `glued' $G_2$-metrics on $M$ obtainable by Theorem~\ref{gluing} are of the
type described in \cite[Theorem 5.3]{kovalev-lee08}. When $W_1$,$W_2$
are constructed from a pair of K3 surfaces with non-symplectic involution
with invariants $r_j,a_j$ and with $d_j=\dim\Ker\iota_j$ as defined in
Proposition~\ref{W}, the resulting compact \gtmfd{} $M$ has
$$
b^2(M)=d_1+d_2+\dim\bigl(\iota_1(H^2(W_+,\RE))\cap\iota_2(H^2(W_-,\RE))\bigr).
$$
Recall that we have $b^2(M)=12$ and $d_1=d_2=4$, whence the last dimension
in the right-hand side is 4. The examples explicitly discussed in
\cite{kovalev-lee08} all have the latter intersection zero-dimensional,
thus $M$ is a new example for the construction given there.

By Theorem~\ref{pullcor} and the work in~\S\ref{with.K3}, the glued
torsion-free \gtstr\ on $M$ obtainable as in~\cite{kovalev03,kovalev-lee08}
is a continuous deformation of a torsion-free \gtstr\ given by
resolving singularities of $T^7/\Ga$ according to \cite[\S 11]{joyce00}.
Therefore, the moduli space for torsion-free \gtstr s on~$M$ has a
connected component
with boundary points corresponding to two types of degenerations of
$G_2$-metrics: (1) those arising by pulling $M$ apart into a pair of EAC
\gtmfd s and (2) those developing orbifold singularities but staying
compact with volume and diameter bounded. To our knowledge, $M$ is the
first example of a compact irreducible \gtmfd\ obtainable, up to deformation,
both by the method of~\cite{joyce00} and by the method of~\cite{kovalev03}.

\end{document}